\newtheorem{theorem}{Theorem}[section]
\newtheorem{lemma}[theorem]{Lemma}
{Sublemma}
\newtheorem{proposition}[theorem]{Proposition}
\newtheorem{corollary}[theorem]{Corollary}
\theoremstyle{definition}
\newtheorem{example}[theorem]{Example}
\theoremstyle{remark}
\newtheorem{remark}[theorem]{Remark}
\numberwithin{equation}{section}
\def\R{{\mathbb R}}
\def\N{{\mathbb N}}
\def\C{{\mathbb C}}
\def\T{{\mathbb T}}
\newcommand{\e}{{\varepsilon}}
\newcommand{\wml}{w_{M,{\log}}}
\newcommand{\XL}{{\mathcal L}}
\newcommand{\XN}{{\mathcal N}}
\newcommand{\supp}{\operatorname{supp}}
\renewcommand{\Im}{\operatorname{Im}}
\renewcommand{\Re}{\operatorname{Re}}
\newcommand{\vanish}[1]{\relax}
\begin{document}

\title[$L^p$-tauberian theorems and $L^p$-rates]{$L^p$-tauberian theorems and $L^p$-rates for energy decay}
\author{Charles J.K. Batty}
\address{St. John's College, University of Oxford, Oxford OX1 3JP, United Kingdom}
\email{charles.batty@sjc.ox.ac.uk}

\author{Alexander Borichev}
\address{Institut de Math\'ematiques de Marseille, Aix Marseille Universit\'e, CNRS, Centrale Marseille, 39 rue F.~Joliot-Curie,
13453 Marseille, France}
\email{alexander.borichev@math.cnrs.fr}

\author{Yuri Tomilov}
\address{Institute of Mathematics, Polish Academy of Sciences,
\' Sniadeckich str.8, 00-956 Warsaw, Poland}
\email{ytomilov@impan.pl}
\subjclass{Primary 47D06; Secondary 34D05 34G10 35L05  40E05 44A10}

\date{\today}

\thanks{The research described in this paper was supported by the EPSRC grant EP/J010723/1. The second author was also partially supported by ANR FRAB. The third author was also partially supported by the NCN grant DEC-2014/13/B/ST1/03153 and by the EU grant  ``AOS'', FP7-PEOPLE-2012-IRSES, No 318910}

\keywords{Tauberian theorem,  energy decay, damped wave equation, Laplace transforms.}

\begin{abstract}
We prove $L^p$-analogues of the classical tauberian theorem of
Ingham and Karamata, and its variations giving rates of decay.
These
results are applied to derive $L^p$-decay of operator families arising
in the study of
the decay of energy for damped wave equations and local
energy for wave equations in exterior domains.
By constructing some
examples of critical behaviour we show that the $L^p$-rates of decay
obtained in this way
are best possible under our assumptions.
\end{abstract}
\maketitle

\section{Introduction}
One of the basic results in tauberian theory is a theorem due to Ingham \cite{In35} and Karamata \cite{Ka34}.  It has been used for elementary proofs of the prime number theorem, it has been a precursor for a number of famous results in function theory and operator theory such as theorems of Katznelson-Tzafriri type, and it still provides a link between many different applications of these theories. The result has found its way into many books and papers and has become a classic of modern tauberian theory; for a detailed discussion, see \cite{Ko04}.

One version of the theorem of Ingham and Karamata reads as follows \cite[Theorem 4.4.1]{ABHN01}, \cite[Theorem III.7.1]{Ko04}.

\begin{theorem}\label{ingham}
Let $X$ be a Banach space and let $f \in L^\infty(\mathbb R_+, X)$ be such that the Laplace transform $\widehat f$ admits an analytic extension to each point of $i\mathbb R$.
Then the improper integral $\int_{0}^{\infty}f(s)\, ds$ exists and equals $\widehat f(0)$.
\end{theorem}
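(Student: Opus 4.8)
The plan is to run Newman's contour-integration argument, the standard route to the scalar Ingham--Karamata theorem (and to the ``analytic'' proof of the prime number theorem). For $\Re\lambda>0$ write $\widehat f(\lambda)=\int_0^\infty e^{-\lambda s}f(s)\,ds$, which is finite and holomorphic since $f$ is bounded; by hypothesis and a standard gluing argument via the identity theorem, $\widehat f$ extends holomorphically to an open set containing $\{\Re\lambda\ge 0\}$. For $T>0$ put $F_T(\lambda)=\int_0^T e^{-\lambda s}f(s)\,ds$; this is entire in $\lambda$ and $F_T(0)=\int_0^T f(s)\,ds$. Everything reduces to showing $F_T(0)\to\widehat f(0)$ in $X$ as $T\to\infty$.

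First I would fix $R>0$ and, using compactness of the segment $\{\lambda:|\lambda|\le R,\ \Re\lambda=0\}$, choose $\delta=\delta(R)\in(0,R)$ so small that $\widehat f$ is holomorphic on the closed region $\{|\lambda|\le R,\ \Re\lambda\ge-\delta\}$. Let $\Gamma=\Gamma_+\cup\Gamma_-$ be the positively oriented boundary of $\{|\lambda|<R,\ \Re\lambda>-\delta\}$, where $\Gamma_+=\{|\lambda|=R,\ \Re\lambda\ge 0\}$ is the right half of the circle and $\Gamma_-$ consists of the remaining two short arcs of $|\lambda|=R$ together with the segment $\Re\lambda=-\delta$. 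With Newman's kernel $\lambda\mapsto e^{\lambda T}\bigl(\tfrac1\lambda+\tfrac{\lambda}{R^2}\bigr)$, the residue theorem (the integrand being holomorphic inside $\Gamma$ apart from a simple pole at $0$ of residue $\widehat f(0)-F_T(0)$) gives
\[
\widehat f(0)-F_T(0)=\frac{1}{2\pi i}\int_{\Gamma}\bigl(\widehat f(\lambda)-F_T(\lambda)\bigr)\,e^{\lambda T}\Bigl(\frac1\lambda+\frac{\lambda}{R^2}\Bigr)\,d\lambda .
\]
The point of the kernel is the elementary identity $\bigl|\tfrac1\lambda+\tfrac{\lambda}{R^2}\bigr|=\tfrac{2|\Re\lambda|}{R^2}$ valid on $|\lambda|=R$.

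Then come three estimates, all carried out in the norm of $X$. (i) On $\Gamma_+$ one has $\|\widehat f(\lambda)-F_T(\lambda)\|=\bigl\|\int_T^\infty e^{-\lambda s}f(s)\,ds\bigr\|\le \|f\|_\infty e^{-T\Re\lambda}/\Re\lambda$; multiplying by $|e^{\lambda T}|=e^{T\Re\lambda}$ and the kernel bound $2\Re\lambda/R^2$ shows the integrand is $\le 2\|f\|_\infty/R^2$, so this part contributes $O(\|f\|_\infty/R)$. (ii) For the $F_T$-term over $\Gamma_-$, since $F_T$ is entire and the region between $\Gamma_-$ and the left half-circle $\{|\lambda|=R,\ \Re\lambda\le 0\}$ does not contain $0$, I replace $\Gamma_-$ by that half-circle; there $\|F_T(\lambda)\|\le \|f\|_\infty e^{-T\Re\lambda}/|\Re\lambda|$ for $\Re\lambda<0$, and the same bookkeeping again gives $O(\|f\|_\infty/R)$. (iii) For the $\widehat f$-term over $\Gamma_-$, the function $\widehat f$ and the kernel $\tfrac1\lambda+\tfrac{\lambda}{R^2}$ are bounded on the compact set $\Gamma_-$ (which lies at distance $\ge\delta$ from $0$), while $|e^{\lambda T}|=e^{T\Re\lambda}\le 1$ on $\Gamma_-$ and tends to $0$ pointwise off the imaginary axis; by dominated convergence this term $\to 0$ as $T\to\infty$. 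Combining (i)--(iii), $\limsup_{T\to\infty}\|\widehat f(0)-F_T(0)\|\le C\|f\|_\infty/R$ with $C$ absolute, and letting $R\to\infty$ finishes the proof.

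I expect the main obstacle to be the correct set-up of the contour and the role of the hypothesis: one really needs holomorphy of $\widehat f$ on all of $i\R$ (not merely near $0$) in order to push the left part of the contour strictly into $\{\Re\lambda<0\}$, which is exactly what makes the factor $e^{\lambda T}$ decay there in step (iii); balancing $\delta$ against $R$ and keeping $0$ on the correct side of each deformation is the fiddly part. The passage from scalars to a general Banach space $X$ is not an obstacle: all the displayed inequalities hold with $|\cdot|$ replaced by $\|\cdot\|_X$, and the contour integrals are $X$-valued Riemann integrals to which the residue theorem and dominated convergence apply in the usual way.
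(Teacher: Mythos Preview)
Your argument is correct: this is precisely Newman's contour method, and each of the three estimates goes through in a Banach space without change. The paper does not actually prove Theorem~\ref{ingham}; it is quoted as a classical result with references to \cite[Theorem 4.4.1]{ABHN01} and \cite[Theorem III.7.1]{Ko04}, and the standard proofs there are essentially the one you wrote.

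It is still worth noting how your proof compares with the technique the paper uses for its own results. In the proof of the $L^p$-version, Theorem~\ref{lpingham}, the authors also start from a Cauchy formula with a Newman-type kernel $\bigl(1+z^2/R^2\bigr)^n z^{-1}$ (your kernel is the case $n=1$, up to a factor $R^{-2}$), and they treat the right semicircle $\gamma_1$ and the deformed left semicircle $\gamma_2$ exactly as in your steps (i) and (ii). The difference is in the third piece: rather than pushing the left part of the contour into $\{\Re z<0\}$ and invoking dominated convergence as you do, they keep it on the segment $I=[-iR,iR]$ and obtain decay by integrating by parts twice in $t$, exploiting that $(1+z^2/R^2)^n$ vanishes at the endpoints $\pm iR$. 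Your version genuinely uses holomorphy across the whole of $i\R$ (to get $\delta(R)>0$ for every $R$), which is exactly the hypothesis of Theorem~\ref{ingham}; the paper's variant only needs an extension near $0$, which is why Theorem~\ref{lpingham} has the weaker hypothesis but also the weaker conclusion $g\in L^p$. Both routes are standard, and for the statement at hand yours is the more direct one.
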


Theorem \ref{ingham} can be equipped with rates as the next result shows.
For a continuous increasing function $M: \mathbb R_+\to [2,\infty)$, define
\begin{eqnarray}
M_{\log}(s) := M(s) [\log(1 +M(s)) + \log(1 + s)],\label{mlog}\\
\Omega_M:=\left \{\lambda \in \mathbb C: \Re\lambda > - \frac{1}{M(|\Im \lambda|)}\right \}. \label{omm}
\end{eqnarray}
The function  $M_{\log}$ is continuous, strictly increasing, with $\lim\limits_{s\to\infty} M_{\log}(s) = \infty$, so it has an inverse function $M_{\log}^{-1}$ defined on $[a,\infty)$ for some $a>0$.  The following was established in \cite{BaDu} (see \cite[Theorem 4.4.6]{ABHN01}).

\begin{theorem}\label{inghamrates}
Let $f \in L^{\infty}(\mathbb R_+,X)$, and assume that $\widehat f$ extends analytically to $\Omega_M$ and the extension satisfies
\begin{equation}  \label{resbound}
\|\widehat f(\lambda)\|\le M(|\Im \lambda |), \qquad \lambda \in \Omega_M.
\end{equation}
Let $M_{\log}$ be defined as above, and $c \in (0, 1)$. Then there exist positive numbers $C$ and $t_0$, depending
only on $\|f\|_{\infty}$, $M$  and $c$,  such that
\begin{equation} \label{Mlogest}
\Big\|\widehat f (0)-\int_{0}^{t}f(s)\,ds\Big\|\le \frac{C}{M^{-1}_{\log}(ct)}, \qquad t \ge t_0.
\end{equation}
\end{theorem}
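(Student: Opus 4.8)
\emph{Proof strategy.} First I would note that $M\ge2>0$, so $\Omega_M$ contains a neighbourhood of $i\mathbb{R}$; by Theorem~\ref{ingham} the integral $\int_0^\infty f(s)\,ds=\widehat f(0)$ exists, so that $r(t):=\widehat f(0)-\int_0^t f(s)\,ds$ is well defined and it suffices to bound $\|r(t)\|$. The key function is
\[
\psi_t(\lambda):=e^{\lambda t}\widehat f(\lambda)-\int_0^t e^{\lambda(t-s)}f(s)\,ds,
\]
which is analytic on $\Omega_M$ (the integral is entire in $\lambda$), satisfies $\psi_t(0)=r(t)$, and for $\Re\lambda>0$ equals $e^{\lambda t}\int_t^\infty e^{-\lambda s}f(s)\,ds$, so that $\|\psi_t(\lambda)\|\le\|f\|_\infty/\Re\lambda$ there. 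Multiplying by Newman's kernel $\lambda^{-1}+\lambda R^{-2}$ (residue $1$ at $0$, otherwise entire) and using the residue theorem, one gets for every positively oriented simple closed contour $\Gamma$ in $\Omega_M$ that winds once around $0$,
\[
r(t)=\frac{1}{2\pi i}\int_\Gamma\psi_t(\lambda)\Big(\frac1\lambda+\frac{\lambda}{R^2}\Big)\,d\lambda .
\]

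I would then fix $R=R(t)>0$ (chosen later) and take $\Gamma$ to be the right semicircle $\{|\lambda|=R,\ \Re\lambda\ge0\}$, two short horizontal segments from $\pm iR$ to $-\eta(R)\pm iR$, and the ``cusp'' curve $\Re\lambda=-\eta(\Im\lambda)$, $|\Im\lambda|\le R$, where $\eta(y):=c_1/M(y)$ with a fixed $c_1\in(0,1)$, so that $\Gamma\subset\Omega_M$. On the right semicircle the kernel has modulus $2\Re\lambda/R^2$ and $\|\psi_t(\lambda)\|\le\|f\|_\infty/\Re\lambda$, so this part is $O(\|f\|_\infty/R)$. Writing $\psi_t=e^{\lambda t}\widehat f-\Phi_t$ with $\Phi_t(\lambda):=\int_0^t e^{\lambda(t-s)}f(s)\,ds$ entire and $\|\Phi_t(\lambda)\|\le\|f\|_\infty/|\Re\lambda|$ for $\Re\lambda<0$, the $\Phi_t$-part over the whole left-hand portion of $\Gamma$ (a path from $iR$ to $-iR$) deforms to the left semicircle $\{|\lambda|=R,\ \Re\lambda\le0\}$ without crossing a pole (as $0$ lies to its right), giving again $O(\|f\|_\infty/R)$. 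On the two segments $\|e^{\lambda t}\widehat f(\lambda)\|\le M(R)$, the kernel is $O(1/R)$ and the length is $\eta(R)=c_1/M(R)$, so these too are $O(1/R)$. On the cusp curve (parametrised by $y=\Im\lambda$) $\|e^{\lambda t}\widehat f(\lambda)\|\le e^{-t\eta(|y|)}M(|y|)=M(|y|)e^{-c_1t/M(|y|)}\le M(R)e^{-c_1t/M(R)}$, since $m\mapsto me^{-c_1t/m}$ is increasing; with the kernel bound $O(|y|^{-1}+R^{-1})$ and integration over $|y|\le R$ this gives $O\big(M(R)e^{-c_1t/M(R)}\log R\big)$, plus an $O(e^{-\delta t})$ term ($\delta:=c_1/M(1)$) from $|y|\le1$. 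Altogether
\[
\|r(t)\|\le\frac{C_0\|f\|_\infty}{R}+C_0\,M(R)e^{-c_1t/M(R)}\log R+C_0e^{-\delta t},
\]
with $C_0$ absolute.

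It remains to choose $R$. For all large $R$, $M(R)\log\big(RM(R)\log R\big)\le\theta M_{\log}(R)$ for any prescribed $\theta>1$, since $\log\log R=o(\log R)$ while $\log R\sim\log(1+R)$ and $\log M(R)\sim\log(1+M(R))$. Given $c\in(0,1)$, fix $\theta>1$ with $c\theta<1$, then $c_1\in(c\theta,1)$, and set $R:=M_{\log}^{-1}(c_1t/\theta)$; this is admissible and $R\to\infty$ once $t\ge t_0$ for a suitable $t_0=t_0(M,c)$. Then $c_1t=\theta M_{\log}(R)\ge M(R)\log(RM(R)\log R)$, so $e^{-c_1t/M(R)}\le(RM(R)\log R)^{-1}$ and the second term is $\le C_0/R$; and $e^{-\delta t}\le1/M_{\log}^{-1}(ct)$ because $M_{\log}(e^{\delta t})\ge M(1)\,\delta t=c_1t\ge ct$. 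Since $c_1/\theta>c$ and $M_{\log}^{-1}$ is increasing, $R\ge M_{\log}^{-1}(ct)$, so $\|r(t)\|\le C/M_{\log}^{-1}(ct)$ with $C$ depending only on $\|f\|_\infty$, $M$ and $c$, which is \eqref{Mlogest}.

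The main obstacle is the optimisation: one must arrange that the two genuine error terms $\|f\|_\infty/R$ and $M(R)e^{-c_1t/M(R)}\log R$ are simultaneously $O\big(1/M_{\log}^{-1}(ct)\big)$, which dictates the exact shape of $M_{\log}$ and consumes the slack between the prescribed $c<1$ and the penetration constant $c_1<1$ --- hence the threshold $t_0$. Two further points are routine but not entirely free: since $M$ is only assumed continuous, one should first replace it by a $C^1$ majorant with $M_{\log}$ of the same order, so the cusp curve is rectifiable with $|d\lambda|\asymp dy$; and one must check the junction of the arc with the cusp curve --- handled by the short horizontal segments, where the smallness $\eta(R)=c_1/M(R)$ absorbs the potential largeness of $M(R)$ --- and that the $\Phi_t$-deformation crosses no singularity.
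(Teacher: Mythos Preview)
Your proof is correct and follows essentially the same approach that the paper uses (in its proof of the more general Theorem~\ref{x1}, which for $p=\infty$ the paper explicitly identifies with the argument of \cite{BaDu}): Newman's contour-integral method with the kernel $(1+\lambda^2/R^2)/\lambda$, the contour split into the right semicircle, the left semicircle for the entire part $\Phi_t$, short horizontal segments, and the boundary curve of $\Omega_M$, followed by the choice $R=M_{\log}^{-1}(kt)$. The paper writes the kernel as $(1+z^2/R^2)^n$ with a general exponent $n$ because higher $n$ is needed for $p<\infty$, but $n=1$ suffices here, exactly as you do; your optimisation step, showing that any prescribed $c\in(0,1)$ works at the cost of enlarging $t_0$, is slightly more explicit than the paper's, but the substance is the same.
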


There are versions of the Ingham--Karamata theorem allowing for a ``small'' set of singularities of $\widehat f$ on the imaginary axis \cite{ArBa88}.  The following is the simple case (for example, see \cite[Theorem 4.4.8]{ABHN01}).

\begin{theorem}\label{inghamsing} Let $f \in L^{\infty}(\mathbb R_+,X)$ be such that $\widehat f$ extends analytically to $i\mathbb R\setminus\{ia\}$, for some $a \in \R \setminus \{0\}$.  If
\begin{equation*} \label{singbound}
\sup_{t \ge 0} \Bigl\|\int_{0}^{t}e^{-i a s}f(s)\, ds\Bigr\|<\infty,
\end{equation*}
then
\begin{equation} \label{singlim}
\lim_{t \to \infty} \int_{0}^{t}f(s)\, ds=\widehat f(0).
\end{equation}
\end{theorem}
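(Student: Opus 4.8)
The plan is to deduce \eqref{singlim} from Theorem~\ref{ingham} by an Ingham--Newman contour argument in which the contour of integration makes a small detour around the exceptional point $ia$, that detour being held in check by the hypothesis $\sup_{t\ge0}\|\int_0^t e^{-ias}f(s)\,ds\|<\infty$.

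A preliminary observation shows how that hypothesis controls $\widehat f$ near $ia$. Writing $h(t)=\int_0^t e^{-ias}f(s)\,ds$, which lies in $L^\infty(\R_+,X)$ by assumption, an integration by parts gives $\widehat h(\lambda)=\widehat f(\lambda+ia)/\lambda$ for $\Re\lambda>0$, i.e.\ $\widehat f(z)=(z-ia)\widehat h(z-ia)$; since $\|\widehat h(\mu)\|\le\|h\|_\infty/\Re\mu$ for $\Re\mu>0$, this yields $\|\widehat f(z)\|\le|z-ia|\,\|h\|_\infty/\Re z$ for $\Re z>0$. It is enough to prove that $r_T:=\widehat f(0)-\int_0^T f(s)\,ds\to0$ as $T\to\infty$. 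Set $F_T(z)=\int_0^T e^{-zs}f(s)\,ds$, an entire function with $F_T(0)=\int_0^T f$; for $\Re z>0$ one has $\widehat f(z)-F_T(z)=\int_T^\infty e^{-zs}f(s)\,ds$, and applying the same integration by parts on $[T,\infty)$ (with the primitive $s\mapsto h(s)-h(T)$ of $e^{-ias}f$) gives $\widehat f(z)-F_T(z)=(z-ia)\int_T^\infty e^{-(z-ia)s}(h(s)-h(T))\,ds$, so that for $\Re z>0$
\[
\bigl\|(\widehat f(z)-F_T(z))e^{zT}\bigr\|\le\min\Bigl(\frac{\|f\|_\infty}{\Re z},\ \frac{2\,|z-ia|\,\|h\|_\infty}{\Re z}\Bigr)
\]
uniformly in $T$.

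Next comes the contour representation. Fix $R>0$ large. As in the proof of Theorem~\ref{ingham}, choose $\delta=\delta(R)>0$ so small that $\widehat f$ is analytic on $\{\,\Re z\ge-\delta,\ |z|\le R\,\}$ with the single exception of a small neighbourhood of $ia$; the crucial structural point is that near $ia$ the analytic continuation of $\widehat f$ is guaranteed only on $\{\Re z\ge0\}$, so the contour has to skirt $ia$ on the right. Let $\Gamma$ be the positively oriented boundary of $\{\,|z|<R,\ \Re z>-\delta\,\}$, modified near $ia$ by deleting the portion of $\{\Re z=-\delta\}$ there and inserting instead a small arc $\gamma_\e\subset\{\Re z\ge0\}$ of the circle $|z-ia|=\e$ together with the two short segments joining its endpoints back to $\{\Re z=-\delta\}$; thus $\Gamma$ winds once around $0$ and not at all around $ia$. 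Cauchy's theorem with the Newman kernel $e^{zT}\bigl(\frac1z+\frac z{R^2}\bigr)$, whose residue at $0$ is $1$, then gives
\[
r_T=\frac{1}{2\pi i}\int_\Gamma\bigl(\widehat f(z)-F_T(z)\bigr)\,e^{zT}\Bigl(\frac1z+\frac z{R^2}\Bigr)\,dz .
\]
On the arc $|z|=R$, $\Re z\ge0$, I would use $\|\widehat f(z)-F_T(z)\|\le\|f\|_\infty e^{-\Re z\,T}/\Re z$ together with the identity $|e^{zT}(\frac1z+\frac z{R^2})|=e^{\Re z\,T}\cdot 2\Re z/R^2$ valid on $|z|=R$, for a contribution of size $O(\|f\|_\infty/R)$; on the part of $\Gamma$ with $\Re z<0$ I would split $\widehat f-F_T$, deform the entire term $-F_T$ across $\{\Re z<0\}$ onto the left semicircle $|z|=R$ for a further $O(\|f\|_\infty/R)$, and let the remaining $\widehat f$-term tend to $0$ as $T\to\infty$ by dominated convergence, using $|e^{zT}|\le1$ and $e^{zT}\to0$ a.e.\ there, together with the boundedness of $\widehat f$ on that compact arc. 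This part is verbatim the proof of Theorem~\ref{ingham}.

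The one genuinely new contribution is that of the detour $\gamma_\e$, and this is the step I expect to be the main obstacle: on $\gamma_\e$ the factor $e^{zT}$ is not small and $\widehat f$ is near its singularity. The two bounds displayed above are exactly what is needed — on $\gamma_\e$ the factor $|z-ia|=\e$ supplies a gain, the bound $\|(\widehat f-F_T)(z)e^{zT}\|\le\|f\|_\infty/\Re z$ is uniform in $T$, and on the two short segments, where $\Re z\le0$, one again appeals to dominated convergence. The delicate point is to shape $\gamma_\e$ so that the estimate by $1/\Re z$, which is not integrable over an arc touching the imaginary axis, is only invoked on a part of $\gamma_\e$ bounded away from $i\R$, the return toward $i\R$ happening at the non-exceptional points $i(a\pm\e)$, where $\widehat f-F_T$ is honestly analytic; one then gets that the $\gamma_\e$-contribution is $o_\e(1)$, uniformly in $T$. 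Granting this, $\limsup_{T\to\infty}\|r_T\|\le O(\|f\|_\infty/R)+o_\e(1)$ for all $R$ and all $\e$, and letting first $\e\to0$ and then $R\to\infty$ gives $r_T\to0$, i.e.\ \eqref{singlim}.
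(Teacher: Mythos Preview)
Your overall plan is right, and your preliminary computation expressing $(\widehat f-F_T)(z)e^{zT}$ in terms of $h$ is exactly the key observation. However, there is a genuine gap in the treatment of the detour $\gamma_\e$, and ``shaping'' the contour is not the fix.

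Parametrise the right semicircle as $z=ia+\e e^{i\theta}$, $|\theta|<\pi/2$, so $\Re z=\e\cos\theta$ and $|dz|=\e\,d\theta$. Your own bound gives
\[
\bigl\|(\widehat f-F_T)(z)e^{zT}\bigr\|\le \frac{2\e\|h\|_\infty}{\Re z}=\frac{2\|h\|_\infty}{\cos\theta},
\]
and since the Newman kernel $1/z+z/R^2$ is merely bounded near $ia$, the $\gamma_\e$ contribution is controlled only by $C\e\int_{-\pi/2}^{\pi/2}\sec\theta\,d\theta=\infty$. No reshaping of $\gamma_\e$ inside $\{\Re z\ge 0\}$ avoids this: the contour must reach $i\R$ to connect with the segments back to $\Re z=-\delta$, and near those endpoints either $1/\Re z$ blows up or (if you stay in $\Re z>0$ and try a different estimate) $|e^{zT}|$ is no longer bounded uniformly in $T$. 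The analyticity of $\widehat f-F_T$ at $i(a\pm\e)$ gives boundedness of $\widehat f$, but $\|F_T(is)\|$ has no uniform-in-$T$ bound there either.

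The remedy used in the paper (which does not prove this theorem directly but proves the sharper $L^p$-rate version, Theorem~\ref{lpsing}, by the method of \cite{ArBa88}) is to modify the \emph{kernel}, not the contour: replace $\bigl(1+z^2/R^2\bigr)^n$ by
\[
\Bigl(1+\frac{z^2}{R^2}\Bigr)^n\Bigl(1+\frac{\e^2}{(z-ia)^2}\Bigr)^n.
\]
On $|z-ia|=\e$ the second factor has modulus $(2|\cos\theta|)^n$, and this supplies precisely the missing $\cos\theta$ that makes the $\gamma_\e$ integral finite and in fact $O(\e)$ uniformly in $T$ (see the estimates for $I_6$ and $I_7$ in the proof of Theorem~\ref{lpsing}). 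The same factor vanishes at $i(a\pm\e)$, which cleanly handles the junction with the horizontal segments. With this modification in place, the rest of your argument goes through exactly as you have written it.
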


Tauberian theorems for functions are closely related to the study of asymptotics of  operator semigroups. In fact, the  tauberian theory, especially contour integral methods, laid the ground for various methods in stability theory of operator semigroups. Recently, the Ingham--Karamata theorem and the Korevaar--Neumann technique around it gave an impetus to the study of tauberian theorems with rates and their applications to partial differential equations (PDEs). They have been applied successfully to obtain rates of decay of classical solutions of abstract Cauchy problems \eqref{Cauchypr}. For recent applications of abstract results to the study of decay of solutions to PDEs, see, for example, the relevant references in \cite{BaChTo13}. In particular, energy decay for damped wave and wave equations was treated by resolvent methods in
\cite{AnLe12}, \cite{BaEnPrSchn06}, \cite{Bu98}, \cite{BuHi07}, \cite{Ch09}, \cite{ChSchVaWu13}, \cite{Le96},
\cite{LeRo97}, and \cite{LiRa07}.

To put tauberian theorems into the abstract framework of operator semigroups,
let us consider the abstract Cauchy problem
\begin{equation}\label{Cauchypr}
\left\{ \begin{array}{ll}
\dot{u} (t) = A u(t), & \quad t \ge 0 ,\\[2mm]
u(0) = x , & x\in X,
\end{array} \right.
\end{equation}
where $A$ is the generator of a
$C_0$-semigroup $(T(t))_{t \ge 0}$ on a Banach space $X$.
Since many PDEs arising from concrete models can be written in this form, the Cauchy problem \eqref{Cauchypr}  is a classical subject
of functional analysis having numerous applications to partial
differential equations.

Since the resolvent of the generator is often easier to compute than the semigroup, it is efficient to use the resolvent and its fine behaviour in the right half-plane to study the asymptotic behaviour of solutions to \eqref{Cauchypr} with particular regard to various kinds of stability in situations which arise from PDEs. A number of resolvent criteria for asymptotic and exponential stability are known in this context (see \cite[Chapter 5]{ABHN01}, \cite{ChTo07} and \cite{Ne96}).

In particular, starting from \cite{Le96} such methods have proved
to be successful in dealing with the damped wave equation
\begin{equation} \label{wave}
\begin{split}
 u_{tt} + a(x) u_t -\Delta u &= 0 \;\,\text{ in } \R_+ \times
 \mathcal M,\\
 u & = 0 \;\,\text{ in } \R_+ \times \partial \mathcal M, \\
 u(0,\cdot ) & = u_0 \text{ in } \mathcal M, \\
 u_t (0,\cdot ) & = u_1 \text{ in } \mathcal M.
\end{split}
\end{equation}
Here $\mathcal M$ is a smooth, compact, connected Riemannian
manifold, $\Delta$ is the Laplace--Beltrami operator on $\mathcal
M$,  and $a \in C^{\infty}(\mathcal M)$, $a \geq 0$. The energy
$E(u,t)$ of the solution $u$ to the problem \eqref{wave} is
defined by
\begin{equation*}
E(u,t)=\frac{1}{2}\left(\|\nabla u(t)\|^2_{L^2(M)}+\|u_t(t)\|^2_{L^2(M)}\right).
\end{equation*}
If  $a$ is strictly positive on an open subset of $\mathcal M$,
then $E(u,t)$  decays to zero as $t \to \infty$. One of the
primary tasks in the study of \eqref{wave} is to quantify the rate
of energy decay, that is to determine $r(t)$ such that
\begin{equation}\label{rateen}
E(u,t) \le r(t)^2 E(u,0)
\end{equation}
for all big enough $t$ and for all initial values $u_0,u_1$ from appropriate spaces. Let us
assume, for definiteness, that $\partial \mathcal M \neq
\emptyset$. Then the wave equation can be rewritten as a first
order Cauchy problem \eqref{Cauchypr} in the Hilbert space $X :=
H^1_0(\mathcal M)\times L^2(\mathcal M)$ with $A$ given  by
\begin{equation}\label{defA1}
A := \begin{pmatrix} O & I\\
                   \Delta & -a\end{pmatrix}
\end{equation}
on the domain
\begin{equation}\label{defA0}
 D(A)= (H^2(\mathcal M)\cap H^1_0(\mathcal M))\times H^1_0(\mathcal
 M).
\end{equation}
 The operator $A$ is invertible, and it generates a
non-analytic contraction semigroup $(T(t))_{t\geq 0}$ such that
$$
c\|T(t) x\|_X^2 \le E(u,t) \le C \|T(t) x\|_X^2
$$
where $x = (u_0,u_1)$ and $c,C$ are positive constants.  Thus any estimate of the rate of decay for
the semigroup is an estimate for the decay of the energy of the
system.  More specifically, let $r : [2,\infty) \to (0,\infty)$ be
a continuous decreasing function.  Then, following \eqref{rateen},
we will say that the equation \eqref{wave} is  {\it stable at rate
$r(t)$} if $E(u,t)^{1/2} \le C r(t) \|A (u_0, u_1)\|$ for all
$(u_0,u_1) \in D(A)$, in other words if $\|T(t)A^{-1}\| \le
Cr(t)$.

Moreover, an extensive line of research stemming from classical
works by Lax, Phillips, Melrose, Morawetz, Sjostrand, Strauss and
many others concerns the {\it local} energy decay of solutions to
wave equations in exterior domains. This situation arises when  $a
\equiv0$ in \eqref{wave} and $M=\R^n \setminus \overline K$, where
$K$ is a bounded domain with smooth boundary called an {\it
obstacle}. In abstract terms, one is led to the study of the norm decay of
operator families
\begin{equation}\label{utt}
U_{T_1, T_2}(t):=T_1 U(t) T_2,
\end{equation}
where $T_1$  and $T_2$ are the operators of multiplication by cut-off functions,
and $(U(t))_{t \in \mathbb R}$ is the unitary group
on $X$ generated by the operator matrix
\begin{equation*}
G:= \begin{pmatrix} O & I\\
                   \Delta & O\end{pmatrix}
\end{equation*}
with $D(G)=D(A)$.  From the enormous number of papers on local
energy decay we just mention the pioneering work \cite{Bu98}, and
\cite{BoPe06} and \cite{Ch09}, as samples where resolvent and
Laplace transform techniques led to efficient estimates for the
decay of $U_{T_1, T_2}$.  A good historical account of the decay of
local energy can be found in \cite{Bu98} and \cite{Ch09}.

To establish one of the links between tauberian and semigroup theories,
note that if $(T(t))_{t \ge 0}$ is a bounded $C_0$-semigroup on a Banach space $X$
with invertible generator $A$,
then setting $f(t)=T(t)x$ for $t\ge 0$, we infer that $\widehat f$ has a holomorphic
extension at $0$ and for any regular point $\mu$ of $A$,
\begin{eqnarray*}
T(t)R(\mu, A)x&=&R(\mu, A)x + AR(\mu, A)\int_{0}^{t}f(s)\, ds\label{link}\\
&=&AR(\mu, A)\left(\int_{0}^{t}f(s)\, ds -\widehat f(0)\right).\nonumber
\end{eqnarray*}
Moreover, by the resolvent equation, the rate of decay of $T(t)R(\mu, A)$ is independent of $\mu$ up to a multiplicative constant.
Thus a tauberian theorem for functions such as Theorem \ref{ingham} or \ref{inghamrates} above yields a corresponding statement for the decay of classical solutions (and related operator families) of the abstract Cauchy problem \eqref{Cauchypr}.  In particular, the following result can be obtained in this way (see \cite[Theorem 4.4.14]{ABHN01}).

\begin{theorem}\label{semigrouprates} Let $A$ be the generator of a bounded $C_0$-semigroup $(T(t))_{t \ge 0}$ on a Banach space $X$, and let $\mu \in \rho(A)$. Then the following are equivalent:
\begin{enumerate}[\rm(i)]
\item $\sigma(A)\cap i\mathbb R$ is empty,  \label{sgr1}
\item $\lim_{t \to \infty} \|T(t)R(\mu,A)\|=0$.
 \end{enumerate}
Moreover, if \eqref{sgr1} holds,
\begin{eqnarray*}
M(s) := \sup \{\|(ir+A)^{-1}\| : |r| \le s\}, \qquad s\ge 0,
\end{eqnarray*}
the function $M_{\log}$ is defined by \eqref{mlog}, and $M^{-1}$ is any right inverse for $M$, then there exist positive constants $C,C',c,c'$ such that
\begin{equation} \label{genestintro}
\frac{c'} {M^{-1}(C't)} \le \|T(t)A^{-1}\| \le \frac{C}
{M_{\log}^{-1}(ct)}
\end{equation}
for all sufficiently large $t$.
\end{theorem}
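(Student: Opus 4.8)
The plan is to derive the equivalence (i)$\Leftrightarrow$(ii) and the upper bound in \eqref{genestintro} from the function-theoretic Theorem~\ref{inghamrates} applied to a well-chosen auxiliary function, and to obtain the lower bound in \eqref{genestintro} by an elementary ``quasi-resonance'' construction on the imaginary axis. Set $K:=\sup_{t\ge0}\|T(t)\|<\infty$. Two preliminary observations are needed under hypothesis (i). First, $i\R\subset\rho(A)$, so in particular $0\in\rho(A)$ and $A^{-1}$ is bounded; as the supremum of the continuous function $r\mapsto\|(ir-A)^{-1}\|$ over growing compact intervals, $M(s):=\max\{2,\sup_{|r|\le s}\|(ir-A)^{-1}\|\}$ is finite, continuous, non-decreasing and $\ge2$, and the map $s\mapsto M_{\log}(s)$ of \eqref{mlog} is then automatically strictly increasing. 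Second, the Neumann series $R(ir+\delta,A)=R(ir,A)\sum_{n\ge0}(-\delta R(ir,A))^n$ (valid for $|\delta|<1/\|R(ir,A)\|$), together with the bound $\|R(\lambda,A)\|\le K/\Re\lambda$ for $\Re\lambda>0$, shows that with $\widetilde M:=2KM$ one has $\Omega_{\widetilde M}\subset\rho(A)$ and $\|R(\lambda,A)\|\le\widetilde M(|\Im\lambda|)$ on $\Omega_{\widetilde M}$; moreover $\widetilde M_{\log}\le c_1M_{\log}$ for a constant $c_1=c_1(K)$, so estimates phrased via $\widetilde M^{-1}_{\log}$ pass to estimates via $M^{-1}_{\log}$ at the expense of the constants allowed in \eqref{genestintro}. (If $M$ is bounded, $R(\cdot,A)$ extends with bounded norm to a vertical strip and $\|T(t)A^{-1}\|$ decays exponentially; I set that degenerate case aside and assume $M$, hence $M_{\log}$ and $M^{-1}$, unbounded, so that $M_{\log}^{-1}$ and $M^{-1}$ are defined on a half-line.)

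For (i)$\Rightarrow$(ii) and the upper bound, fix $x\in X$ with $\|x\|\le1$ and apply Theorem~\ref{inghamrates} to $f(s):=T(s)x\in L^\infty(\R_+,X)$, whose Laplace transform is $\widehat f(\lambda)=R(\lambda,A)x$ and hence, by the second observation, extends to $\Omega_{\widetilde M}$ with $\|\widehat f(\lambda)\|\le\widetilde M(|\Im\lambda|)$. Since $0\in\rho(A)$ one has $\widehat f(0)=-A^{-1}x$ and $\int_0^t f(s)\,ds=A^{-1}(T(t)x-x)$, so the left-hand side of \eqref{Mlogest} equals $\|A^{-1}T(t)x\|=\|T(t)A^{-1}x\|$; thus Theorem~\ref{inghamrates} yields $\|T(t)A^{-1}\|\le C/M^{-1}_{\log}(ct)$ for large $t$, which is the right-hand inequality of \eqref{genestintro}. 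Inserting this into the identity $R(\mu,A)=\mu A^{-1}R(\mu,A)-A^{-1}$ gives $\|T(t)R(\mu,A)\|\le(|\mu|\,\|R(\mu,A)\|+1)\|T(t)A^{-1}\|\to0$, which is (ii); this argument also shows (ii) is equivalent to $\|T(t)A^{-1}\|\to0$.

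For (ii)$\Rightarrow$(i), I argue by contraposition. If $ir_0\in\sigma(A)$ for some $r_0\in\R$ then, as boundedness of $(T(t))_{t\ge0}$ forces $\sigma(A)\subset\{\Re\lambda\le0\}$, the point $ir_0$ lies on $\partial\sigma(A)$ and so $\|R(\lambda,A)\|\ge1/\dist(\lambda,\sigma(A))\to\infty$ as $\rho(A)\ni\lambda\to ir_0$; this produces unit vectors $x_n$ with $(A-ir_0)x_n\to0$. Integrating $\tfrac{d}{ds}\big(e^{-ir_0 s}T(s)x_n\big)=e^{-ir_0 s}T(s)(A-ir_0)x_n$ over $[0,t]$ gives $\|T(t)x_n-e^{ir_0 t}x_n\|\le Kt\,\|(A-ir_0)x_n\|\to0$ for each fixed $t$; applying the bounded operator $R(\mu,A)$ and using $\|R(\mu,A)x_n\|\to|\mu-ir_0|^{-1}>0$ (also a consequence of $(A-ir_0)x_n\to0$) one gets $\|T(t)R(\mu,A)x_n\|\to|\mu-ir_0|^{-1}$, whence $\|T(t)R(\mu,A)\|\ge\tfrac12|\mu-ir_0|^{-1}$ for all $t$, contradicting (ii).

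It remains to prove the lower bound, and this is where I expect the main difficulty. Assuming (i), fix $r\in\R$, put $R:=\|R(ir,A)\|$, and choose $x$ with $\|x\|\le1$ and $y:=R(ir,A)x$ satisfying $\tfrac12 R\le\|y\|\le R$; then $(A-ir)y=-x$. Integrating $\tfrac{d}{ds}\big(e^{-irs}T(s)y\big)=-e^{-irs}T(s)x$ over $[0,t]$ gives $T(t)y=e^{irt}y-\int_0^t e^{ir(t-s)}T(s)x\,ds$, so $\|T(t)y\|\ge\|y\|-Kt\ge\tfrac14 R$ whenever $0\le t\le R/(4K)$. Writing $y=A^{-1}w$ with $w=Ay=iry+x$ and $\|w\|\le|r|R+1\le2|r|R$ (for $|r|R\ge1$) then yields $\|T(t)A^{-1}\|\ge\|T(t)y\|/\|w\|\ge1/(8|r|)$ on that interval. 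Given large $t$, I use the unboundedness of $M$ to select a frequency $r_t$ with $|r_t|\le M^{-1}(5Kt)$ and $\|R(ir_t,A)\|\ge4Kt$ (available from the definition of $M$, with harmless constant adjustments if $M$ has jumps); then $t$ lies in the admissible window $[0,\|R(ir_t,A)\|/(4K)]$ and the previous bound gives $\|T(t)A^{-1}\|\ge1/(8|r_t|)\ge c'/M^{-1}(C't)$ with $c'=\tfrac18$, $C'=5K$. The delicate points in this last argument are precisely the frequency selection $t\mapsto r_t$, which turns the local-in-time estimate $\|T(t)A^{-1}\|\gtrsim1/|r|$ valid on $[0,cR]$ into a clean bound for each individual large $t$, and the bookkeeping with right inverses of the merely non-decreasing functions $M$ and $M_{\log}$; everything else reduces to routine estimates and to the cited Theorem~\ref{inghamrates}.
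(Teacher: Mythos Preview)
The paper does not give its own proof of this theorem; it is quoted from \cite[Theorem 4.4.14]{ABHN01} (the lower bound in \eqref{genestintro} going back to \cite{BaDu}), with only the link formula $T(t)R(\mu,A)x=AR(\mu,A)\bigl(\int_0^t f-\widehat f(0)\bigr)$ sketched in the preceding paragraph. Your argument is correct and is precisely the standard proof found in those references: the upper bound and (i)$\Rightarrow$(ii) by applying Theorem~\ref{inghamrates} to $f(\cdot)=T(\cdot)x$ (exactly the route the paper indicates), (ii)$\Rightarrow$(i) via approximate eigenvectors at a boundary spectral point, and the lower bound via the orbit estimate $\|T(t)y-e^{irt}y\|\le Kt$ for $y=R(ir,A)x$ together with a frequency choice $r_t$ at which the resolvent norm is of order~$t$.

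Two minor remarks. First, your Neumann/Hille--Yosida step is the observation the paper records verbatim before Theorem~\ref{x1} (any $a>1$ works, giving $\|R(\lambda,A)\|\le\frac{a}{a-1}M(|\Im\lambda|)$ on $\Omega_{aM}$); your choice $\widetilde M=2KM$ is fine once one splits into $\Re\lambda\le 1/(2M)$ (Neumann) and $\Re\lambda>1/(2M)$ (Hille--Yosida). Second, in the lower-bound step you tacitly use that $|r_t|\to\infty$ so that $|r_t|R\ge1$ eventually; this is automatic because $\|R(ir,A)\|$ is bounded on compacts while you require it to exceed $4Kt$.
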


A natural and important question is whether the upper bound in \eqref{genestintro} can be improved to match the lower bound.
This is indeed possible in some cases, where $M$ and $X$ are particularly tractable, as the following result from \cite{BoTo10} shows.

\begin{theorem}\label{borto}
Let $(T(t))_{t \ge 0}$ be a bounded $C_0$-semigroup on a Hilbert space
$X$, with generator $A$.  Assume that $\sigma(A)\cap i\mathbb R$
is empty, and fix  $\alpha >0$.  Then the following statements
are equivalent:
\begin{enumerate}[\rm(i)]
\item   $\|R(is,A)\|= {\rm O}(|s|^\alpha), \qquad |s| \to \infty$.
\item  $\|T(t)A^{-1}\|= {\rm O}(t^{-1/\alpha}), \qquad t \to \infty$.
\end{enumerate}
\end{theorem}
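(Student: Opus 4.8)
The plan is to treat the two implications separately, using the tauberian machinery developed above (Theorem \ref{inghamrates}) for the hard direction (i)$\implies$(ii), and a softer argument for (ii)$\implies$(i). For (i)$\implies$(ii), I would first observe that the hypothesis $\|R(is,A)\|=\mathrm{O}(|s|^\al)$ together with the fact that $(T(t))$ is bounded (so that the resolvent is uniformly bounded on any vertical strip $\Re\lambda\ge\delta>0$) gives, after a standard Neumann-series/perturbation argument, that $R(\cdot,A)$ extends analytically to a region $\Omega_M$ with $M(s)\asymp (1+|s|)^\al$, and that on $\Omega_M$ one has the polynomial resolvent bound $\|R(\lambda,A)\|\le C(1+|\Im\lambda|)^\al$. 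The point of passing from the imaginary axis into $\Omega_M$ is precisely what makes Theorem \ref{inghamrates} applicable: one sets $f(t)=T(t)A^{-1}x$ (for $\|x\|\le1$), notes $f\in L^\infty(\R_+,X)$ with $\|f\|_\infty\le C$, and computes $\widehat f(\lambda)=R(\lambda,A)A^{-1}x=\lambda^{-1}(R(\lambda,A)-A^{-1})x$, which is analytic on $\Omega_M$ and satisfies a bound of the form $\|\widehat f(\lambda)\|\le C(1+|\Im\lambda|)^\al$ there; rescaling $M$ by a constant to absorb this, Theorem \ref{inghamrates} yields $\|\widehat f(0)-\int_0^t f\|\le C/M_{\log}^{-1}(ct)$.

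Now the subtlety: $M_{\log}^{-1}(ct)$ for $M(s)\asymp s^\al$ behaves like $(t/\log t)^{1/\al}$, which produces a bound $\mathrm{O}\big((\log t/t)^{1/\al}\big)$ for $\int_0^t f-\widehat f(0)$ — slightly worse than the claimed $\mathrm{O}(t^{-1/\al})$ by a logarithmic factor. This is exactly the gap that Theorem \ref{borto} (the Hilbert-space result of \cite{BoTo10}) is designed to close, and I expect \emph{this is the main obstacle}. The Hilbert-space structure must be exploited to remove the logarithm: the natural route is a Plancherel/Paley--Wiener argument on the imaginary axis rather than a contour-integral argument in $\Omega_M$. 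Concretely, I would work with $g(t)=T(t)A^{-1}x$ and its derivative, or with the resolved quantity $T(t)A^{-1}$ directly, split the inverse-Fourier-transform representation $T(t)A^{-1}x=\frac{1}{2\pi i}\int_{\Re\lambda=\delta}e^{\lambda t}R(\lambda,A)A^{-1}x\,d\lambda$, deform the contour onto (a curve near) $i\R$, and estimate using $\int_\R\|R(is,A)u\|^2\,ds\le C\|u\|^2$ — the Hilbert-space resolvent $L^2$-bound that is available because $(T(t))$ is bounded on a Hilbert space (Gomilko/Shi--Feng). Combining the pointwise bound $\|R(is,A)\|\le C|s|^\al$ on a dyadic block $|s|\sim 2^k$ with the $L^2$-bound over that block, optimizing the contour height $\sim t^{-1}$ and the truncation $|s|\lesssim t^{1/\al}$ (Cauchy--Schwarz on each dyadic piece), should yield $\|T(t)A^{-1}\|=\mathrm{O}(t^{-1/\al})$ with no logarithmic loss. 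Getting the bookkeeping of the dyadic sum and the contribution of the vertical connecting segments to close cleanly is where the real work lies.

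For the converse (ii)$\implies$(i), I would argue more directly and without appeal to the tauberian theorems. From the identity (valid for $\Re\lambda>0$)
\begin{equation*}
R(\lambda,A)A^{-1}=\int_0^\infty e^{-\lambda t}\,T(t)A^{-1}\,dt,
\end{equation*}
the hypothesis $\|T(t)A^{-1}\|=\mathrm{O}(t^{-1/\al})$ — which in particular forces $\sigma(A)\cap i\R=\emptyset$, so $R(is,A)$ exists for all $s\in\R$ — gives by dominated convergence, letting $\Re\lambda\downarrow0$,
\begin{equation*}
\|R(is,A)A^{-1}\|\le\int_0^\infty\|T(t)A^{-1}\|\,dt
\end{equation*}
on any region where $\int_0^\infty e^{-\e t}t^{-1/\al}dt$ can be controlled; more carefully, one splits $\int_0^\infty=\int_0^{|s|^{?}}+\int_{|s|^{?}}^\infty$ and uses the decay bound on the tail and the trivial bound $\|T(t)A^{-1}\|\le C$ on the head, with the crossover at $t\sim|s|$, to obtain $\|R(is,A)A^{-1}\|=\mathrm{O}(|s|^{\,1-1/\al})$ when $\al>1$ (and a routine variant when $\al\le1$), hence $\|R(is,A)\|=\mathrm{O}(|s|\cdot|s|^{\,1-1/\al})$... which overshoots. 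To get the sharp exponent one instead uses the semigroup law $T(t)=T(t/2)^2$ together with the moment identity relating $R(is,A)$ to $\int t^{k}e^{-ist}T(t)A^{-1}$ after repeated integration by parts, or — cleanly — invokes the a priori bound $\|T(t)A^{-1}\|=\mathrm{O}(t^{-1/\al})\Rightarrow\|T(t)A^{-k}\|=\mathrm{O}(t^{-k/\al})$ (which follows from boundedness of $(T(t))$ and interpolation/composition), integrates $t^{k-1}e^{-\e t}T(t)A^{-k}$ to bound the $(k-1)$st derivative of $s\mapsto R(is,A)A^{-k}$, and reads off $\|R(is,A)A^{-k}\|=\mathrm{O}(|s|^{k/\al-1})$, i.e. $\|R(is,A)\|=\mathrm{O}(|s|^\al)$ after multiplying back by $A^{k}$ and using $\|A^{k}R(is,A)\|\lesssim|s|^{k-1}\|R(is,A)\|$... — the precise juggling of powers here is routine but must be done with care to land exactly on the exponent $\al$; the conceptual content is just the Laplace-transform correspondence between polynomial decay of $T(t)A^{-1}$ and polynomial growth of the resolvent, with the semigroup's boundedness doing the heavy lifting.
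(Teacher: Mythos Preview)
The paper does not prove Theorem~\ref{borto}; it is quoted from \cite{BoTo10} as a known result, so there is no proof in the present paper to compare your attempt against.

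On the substance of your proposal: your diagnosis of the hard direction (i)$\Rightarrow$(ii) is correct. Applying Theorem~\ref{inghamrates} with $M(s)\asymp(1+s)^\alpha$ gives only $\|T(t)A^{-1}\|=\mathrm{O}\big((\log t/t)^{1/\alpha}\big)$, and removing the logarithm genuinely requires the Hilbert-space structure. Your sketch via a Plancherel-type resolvent bound on $i\R$ (the Gomilko/Shi--Feng estimate $\int_\R\|R(a+is,A)x\|^2\,ds\le C\|x\|^2/a$) is indeed the mechanism used in \cite{BoTo10}; however, what you wrote is too vague to count as a proof --- the ``dyadic blocks plus Cauchy--Schwarz plus optimized truncation'' description does not make clear how the various pieces combine to avoid any logarithmic loss, and getting that bookkeeping right is exactly the content of the Borichev--Tomilov argument.

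For (ii)$\Rightarrow$(i) your attempts are overcomplicated and, as you yourself note, do not close. The clean route is already in the paper: the lower bound in \eqref{genestintro} of Theorem~\ref{semigrouprates} says $\|T(t)A^{-1}\|\ge c'/M^{-1}(C't)$ with $M(s)=\sup_{|r|\le s}\|R(ir,A)\|$. If (ii) holds, then $M^{-1}(C't)\ge ct^{1/\alpha}$ for large $t$, hence $M(s)\le Cs^\alpha$, which is (i). The underlying estimate is elementary: for large $|s|$ pick a near-extremal $y=R(is,A)x$ with $\|x\|=1$, use $T(t)y-e^{ist}y=\int_0^t e^{is(t-r)}T(r)(A-is)y\,dr$ to get $\|T(t)y\|\ge\|y\|-Kt$, and evaluate at $t\asymp\|y\|\asymp M(|s|)$; comparing with $\|T(t)A^{-1}\|\cdot\|Ay\|$ and $\|Ay\|\le1+|s|\,\|y\|$ gives $\|T(t)A^{-1}\|\gtrsim1/|s|$ at $t\asymp M(|s|)$, which forces $M(|s|)\lesssim|s|^\alpha$. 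Your Laplace-transform manipulations can be made to work for this direction too, but they are not the efficient path.
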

 Theorem \ref{borto} has been shown to be very useful in a variety of different contexts. Since our presentation concentrates mainly on the equations of wave type, we restrict ourselves to mentioning the recent papers \cite{AnLe12}, \cite{BurqZuily} and \cite{LeautaudLerner}.
More general, and optimal or close to optimal, versions of Theorem \ref{borto} with the bound $|s|^\alpha$ in \eqref{sgr1} replaced by a bound of the form $|s|^\alpha \ell(|s|)$, where $\ell$ is a slowly varying function,
(for example, the bounds $|s|^\alpha (\log|s|)^{\pm \beta}$) were obtained recently in \cite{BaChTo13}.

However, it was shown in \cite{BoTo10} that the estimates in \eqref{genestintro} are sharp for polynomial rates on Banach spaces, and the logarithmic gap between $M_{\log}$ and $M$ is unavoidable in that case. Moreover, for some $M$, for example $M(s)=(1+\log|s|)^\alpha$ where $0<\alpha<1$, an estimate of the form $\|T(t)A^{-1}\|\le C/M^{-1}(ct)$ cannot hold even on Hilbert spaces (see \cite[Proposition 5.1]{BaChTo13}).

The Ingham--Karamata theorem and its variants in the literature, and their operator counterparts, concern bounded functions and bound\-ed operator semigroups.   The aim of the present paper is to provide similar theorems in the framework of $L^p$-spaces, with the case $p =\infty$ reproducing the earlier results. Such results have not been available in the
literature so far.  We show that our theorems are best possible,
at least in several important cases.  Apart from the contribution to tauberian theory, the paper gives several applications of its function-theoretic framework to operator models of PDEs in general and to damped wave equations in particular.  While the consequences of this approach for asymptotics of operator semigroups are rather trivial, this nevertheless leads to new results on asymptotic behaviour of the cut-off operator families defined in \eqref{utt}, and thus on decay of local energy for wave and similar equations.  There are many results on decay of local energy in both abstract and concrete settings. However, our approach to measuring the size of local energy in the $L^p$-sense seems to be new, and we are not aware of any papers in that direction.  This approach leads also to new information on energy decay for the damped wave equation as well, see Theorem \ref{Hilbert} and Example \ref{ex64}.

In particular, our new point of view provides the following $L^p$-version of Theorem \ref{inghamrates} (see Theorem \ref{x1}).

\begin{theorem}  \label{x0}
Let $f \in L^p(\mathbb R_+,X)$, where $1 \le p \le \infty$, and assume that $\widehat f$ extends analytically to $\Omega_M$ and satisfies
\begin{equation} \label{resboundp}
\|\widehat f(\lambda)\|\le M(|\Im \lambda |), \qquad \lambda \in \Omega_M.
\end{equation}
Then there exists $c>0$, depending only on $p$,  such that the function
\begin{equation*} \label{Mlogestp}
t \mapsto M_{\log}^{-1} (ct) \Big(\widehat f (0)-\int_{0}^{t}f(s)\,ds \Big)
\end{equation*}
belongs to $L^p(\R_+,X)$.
\end{theorem}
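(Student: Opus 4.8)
The plan is to adapt the Korevaar--Newman contour integral argument underlying Theorems \ref{ingham} and \ref{inghamrates}, but with the contour depending on $t$ and with each of its pieces estimated so as to yield a function of $t$ in (weighted) $L^p$ rather than just a pointwise bound. Put $g(t):=\widehat f(0)-\int_0^t f(s)\,ds$ and $g_t(\lambda):=\int_0^t f(s)e^{-\lambda s}\,ds$, an entire function, so that on $\Re\lambda>0$ the difference $\widehat f-g_t$ is the Laplace transform of the tail $s\mapsto f(t+s)$. Fix $\eta\in(0,1)$ and, for $r>0$, a smooth kernel $B_r$, holomorphic near $\overline{\Omega_M}\cap\{|\lambda|\le 2r\}$, with $B_r(0)=1$ and $|B_r|$ decaying rapidly at scale $r$ on vertical lines. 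Applying Cauchy's theorem to $(\widehat f(\lambda)-g_t(\lambda))e^{\lambda t}B_r(\lambda)/\lambda$ on a positively oriented contour $\Gamma_r$ which encircles $0$, follows $\Re\lambda=-\eta/M(|\Im\lambda|)$ for $|\Im\lambda|\le r$, detours slightly to the right of $0$, and is closed off at height $r$, one obtains
\[
g(t)=\frac1{2\pi i}\oint_{\Gamma_r}\bigl(\widehat f(\lambda)-g_t(\lambda)\bigr)\,e^{\lambda t}\,\frac{B_r(\lambda)}{\lambda}\,d\lambda ,
\]
all of it the residue at the pole $\lambda=0$. I would then choose $r=r(t)$ of size roughly $M_{\log}^{-1}(ct)$ and split $\Gamma_{r(t)}$ into its ``deep'' part ($\Re\lambda<0$), where $|e^{\lambda t}|=e^{-\eta t/M(|\Im\lambda|)}$ provides exponential decay, and the remaining part near and to the right of $0$ (the detour, the connectors at $|\Im\lambda|=r$, and the high-frequency remainder suppressed by $B_r$).

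On the deep part, the $\widehat f$-piece is controlled by $\|\widehat f(\lambda)\|\le M(|\Im\lambda|)$, leading to an integral of the type $\int_0^{r}\tfrac{M(\tau)}{1+\tau}e^{-\eta t/M(\tau)}\,d\tau$; optimising $r$ and the interplay of $M$ with the exponential reproduces, as in Theorem \ref{inghamrates}, a bound by $C/M_{\log}^{-1}(ct)$. The $g_t$-piece on the deep part is instead estimated by a convolution, $\|g_t(\lambda)e^{\lambda t}\|\le\int_0^t\|f(s)\|e^{-\eta(t-s)/M(|\Im\lambda|)}\,ds$, so that after integration in $d\lambda$ it is dominated by $\|f(\cdot)\|$ convolved with a nonnegative integrable kernel, and Young's inequality should then place it in $L^p(\R_+)$ — this being the point at which the hypothesis $f\in L^p$ (rather than only $f\in L^\infty$) genuinely enters. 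The part near $0$ produces, via $(\widehat f-g_t)(\lambda)e^{\lambda t}=\int_0^\infty f(t+v)e^{-\lambda v}\,dv$, terms controlled by the tails $\|f\mathbf 1_{[t,\infty)}\|_p$ (times constants depending on $p$ and on the detour), and the high-frequency remainder is made small using the decay of $B_r$ together with a Plancherel-type, or Hausdorff--Young, estimate on the boundary values of $\widehat{f_t}$.

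The main obstacle is to make all these contributions, with one coherent choice of $r=r(t)$ and of the constant $c$, land simultaneously in the \emph{weighted} space, i.e.\ with the factor $M_{\log}^{-1}(ct)$ in front. Here the classical pointwise rate of Theorem \ref{inghamrates} is not enough: $t\mapsto 1/M_{\log}^{-1}(ct)$ need not belong to $L^p(\R_+)$ (already for $p=1$ and $M(s)=s$), so the $L^p$-conclusion must be extracted from a real interaction of $f\in L^p$ with the analytic continuation, not from the pointwise bound alone. I expect the decisive point to be a self-improving feature of the hypotheses — the same analyticity and growth control, applied to $f$ itself, already forces its tails $\|f\mathbf 1_{[t,\infty)}\|_p$ to decay fast enough (with room to spare once $c$ is small) — which, fed back into the tail and convolution terms above, yields membership in $L^p$; tracking constants then produces a $c$ depending only on $p$. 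For $p=\infty$ the whole scheme degenerates to the pointwise argument and recovers Theorem \ref{inghamrates}. An alternative route for $p\le 2$, via $\widehat g(\lambda)=(\widehat f(0)-\widehat f(\lambda))/\lambda$, a leftward deformation of the Bromwich inversion contour into $\Omega_M$, and a dyadic frequency decomposition, runs into a lossy summation over the dyadic blocks, so I would favour the direct contour estimate above.
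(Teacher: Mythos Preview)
Your overall scheme --- a $t$-dependent contour, a Korevaar--Newman kernel, and separation into ``deep'' and ``near-axis'' pieces --- is the right shape and matches the paper's argument in outline. You also correctly identify the central difficulty: the pointwise bound $\|g(t)\|\le C/M_{\log}^{-1}(ct)$ is genuinely insufficient, since that function need not lie in $L^p$. But the mechanisms you propose to overcome this do not work.

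The main gap is in the $g_t$-piece. You write that after integrating in $d\lambda$ it is ``dominated by $\|f(\cdot)\|$ convolved with a nonnegative integrable kernel, and Young's inequality should then place it in $L^p$''. But the kernel depends on $t$ through $r(t)$, so this is \emph{not} a convolution, and Young's inequality does not apply. What one actually obtains from the semicircular parts of the contour (with the concrete kernel $B_r(\lambda)=(1+\lambda^2/r^2)^n$) is
\[
J_i(t)\le \frac{C}{r(t)}\,(P_{1/r(t)}*h)(t),\qquad h(s)=\|f(s)\|\chi_{\R_+}(s),
\]
where $P_y$ is the Poisson kernel. The factor $1/r(t)$ exactly cancels the weight $r(t)=M_{\log}^{-1}(kt)$, and the remaining quantity $\int_0^\infty(P_{1/r(t)}*h)(t)^p\,dt$ is the $L^p$-norm of the Poisson extension of $h$ along the curve $\{t+i/r(t):t\ge0\}$ in the upper half-plane. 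Since $r$ is monotone, this is a Carleson curve, and the \emph{Carleson embedding theorem} gives the bound $C\|f\|_p^p$. This is the missing idea; it replaces your appeal to Young.

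Your fallback, a ``self-improving'' decay of the tails $\|f\mathbf 1_{[t,\infty)}\|_p$, is not correct: the hypotheses impose no decay on $f$ beyond $f\in L^p$, and nothing prevents the mass of $f$ from sitting arbitrarily far to the right. Finally, the $\widehat f$-piece on the deep part (the paper's $I_3$ and $I_4$) \emph{does} have to land in weighted $L^p$ from its pointwise bound alone, and it does: with $r(t)=M_{\log}^{-1}(kt)$ and $n$ large, $k$ small, those terms are of order $r(t)^{-a}M(r(t))^{-b}$ with $a>0$, $b>1$, and the elementary integrability estimate \eqref{rmint4} puts them in $L^p$ with the weight. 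So the argument does close, but via Carleson embedding for the semicircle terms rather than Young or tail self-improvement.
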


Theorem \ref{x0} remains true when \eqref{resboundp} is weakened somewhat (see Theorem \ref{x1}).

The paper is organised as follows. In Section~\ref{se2} we set up our notation and terminology and prove several auxiliary estimates for the function $M_{\log}$ (given by \eqref{mlog}) which will  frequently be used in the sequel.
The $L^p$-version of the Ingham--Karamata tauberian theorem is presented in Section~\ref{se3}. In Section~\ref{se4}, we prove a weighted $L^p$-version of the Ingham--Karamata theorem, Theorem~\ref{x1}, with the weight determined in terms of growth bounds on Laplace transforms in appropriate regions.
In Theorem~\ref{lpsing} we give an $L^p$-version of Theorem~\ref{inghamsing} in a quantified form, which is a new feature even for $p = \infty$.
Section~\ref{se5} provides an auxiliary function-theoretical construction showing that Theorem~\ref{x1} is sharp, at least for polynomial and logarithmic rates.
In Proposition~\ref{prop}, we construct certain complex measures with a good control of their Laplace transforms and of related functions, while in Theorem~\ref{example}, we use the Laplace transforms of these measures as building blocks to give an explicit example illustrating optimality of Theorem~\ref{x1}.
Operator-theoretical applications of our $L^p$ tauberian theorems to orbits of $C_0$-semigroups and cut-off semigroups as in \eqref{utt} are contained in Section~\ref{sect6}.
In this section, we prove two abstract theorems~\ref{energydecayabs} and \ref{Hilbert}.
Moreover, as a consequence of Theorem~\ref{Hilbert},  we obtain a new result, Corollary~\ref{c65}, on the asymptotics of solutions to damped wave equations on manifolds.
Theorems~\ref{xyz} and \ref{xyzA} given in Section~\ref{se7} show that our abstract results are optimal as well.

\section{Preliminaries}\label{se2}

Throughout this paper, $X$ will be a complex Banach space, and sometimes it will be a Hilbert space. We let ${\mathcal L}(X)$
denote the space of all bounded linear operators on Banach space $X$, and
the identity operator will be denoted by $I$. If $A$ is an (unbounded) linear
operator on $X$, we denote the domain of $A$ by $D(A)$,  the spectrum of $A$ by $\sigma(A)$, the resolvent set by $\rho(A)$,
and the resolvent of $A$ by $R(\lambda, A) := (\lambda-A)^{-1}$.

We shall  write
$\mathbb C_+ :=  \{\lambda \in \mathbb C : \operatorname{Re} \lambda  > 0\}$, $\mathbb C^+:=\{\lambda \in \mathbb C: \operatorname{ Im} \lambda >0\}$, $\C_- :=  \{\lambda \in \mathbb C : \operatorname{Re} \lambda  < 0\}$,
$\mathbb R_+ := [0,\infty)$, and $\mathbb R_- := (-\infty,0]$. The characteristic function of a set $E \subset \mathbb C$ will be denoted by $\chi_E$.  We shall use $C$ and $c$ to denote (strictly) positive constants, whose values may change from place to place.

We shall consider many functions $f$ defined on $\R_+$ with values in $(0,\infty)$, $\C$ or a Banach space $X$.  The Laplace transform of a measurable Laplace transformable function $f$ will be denoted by $\widehat f$.  Then $\widehat f$ is holomorphic on a right half-plane (usually $\C_+$ in this paper).  We shall use the same notation $\widehat f$ for any holomorphic extension to a larger connected open set.  For functions $f,g$ with values in $(0,\infty)$, the notation $f \asymp g$ means that $c g \le f \le C g$ for some positive $c,C$, while $f \sim g$ means that $\lim_{t\to\infty} f(t)/g(t) = 1$.

We shall use certain weighted $L^p$-spaces on $\R_+$.  Let $w : \R_+ \to (0,\infty)$ be a measurable function.  For a function $g : \R_+ \to X$, we shall write that $g \in L^p(\R_+,w,X)$ to mean that $g \cdot w \in L^p(\R_+,X)$.  We shall write
$$
\|g\|_{L^p(\R_+,w,X)} = \|g \cdot w\|_{L^p(\R_+,X)}.
$$
Our weights $w$ will always be increasing functions, so we may refer to such a statement  as saying
that $1/w$ is an {\it $L^p$-rate} of decay for $g$.  The precise form of $w$ on any interval $[0,a]$ is unimportant in such statements.

 Throughout this paper as in the Introduction, we shall let $M$ be a continuous increasing function from $\R_+$ to $[2,\infty)$, and $M_{\log}$ and $\Omega_M$ will be defined by  \eqref{mlog} and \eqref{omm}.  Thus $M_{\log}$ is continuous and strictly increasing on $\R_+$, with an inverse function $M_{\log}^{-1}$ defined on $[M_{\log}(0), \infty)$.   Since we are interested only in long-time asymptotic behaviour, the values of $M$ on any interval $[0,s_0]$ will be unimportant.

The conclusion of Theorem \ref{inghamrates} establishes that $w$ is an $L^\infty$-rate of decay for $g$, where
$$
 g(t) = \widehat f(0) - \int_0^t f(s) \, ds, \qquad  w(t) = M_{\log}^{-1}(ct). \qquad
$$
Here the weight $w$ is not defined on an interval of the form $[0,t_0]$.  This is unimportant for the discussion of $L^p$-rates, but for definiteness we define a weight $\wml$ on $\R_+$ by
$$
w_{M,\log}(t) = \begin{cases}  M_{\log}^{-1}(t)  \qquad &t \ge M_{\log}(1), \\
1 &0\le t\le M_{\log}(1).  \end{cases}
$$
In Theorem \ref{inghamrates} and in the main results of this paper, the $L^p$-rates of decay correspond to weights of the form $w(t)= \wml(kt)$ where $k$ is a constant depending on certain parameters.  For many functions $M$, one has $\wml(kt) \asymp \wml(t)$, so such rates of decay are independent of $k$.

Now we establish a few properties of the functions $M_{\log}$ and $\wml$.  We write $R(t) = \wml(t)$. Then, for $t\ge M_{\log}(1)$,
\begin{align*}
t = M_{\log} (R(t)) &= M(R(t)) \Big(\log(1+R(t)) + \log \big(1 + M(R(t))\big) \Big) \\
&\ge M(R(t)) \log \big( 1 + M(R(t))\big) .
\end{align*}
Since the inverse of $s \mapsto s \log(1+s)$ is asymptotically equivalent to $t \mapsto t(\log t)^{-1}$ as $t\to\infty$, it follows that
\begin{equation} \label{rmint}
 M(R(t)) \le \frac{C t}{\log t}
\end{equation}
for all sufficiently large $t$, and then for all $t\ge2$.  If $M(s) \ge \kappa s^\alpha$ for some $\kappa,\alpha>0$, we also have
$$
t  \le C M(R(t)) \log \big( 1 + M(R(t))\big)
$$
and then
\begin{equation} \label{rmint2}
 M(R(t)) \ge \frac{c t}{\log t}
\end{equation}
for all sufficiently large $t$, and then for all $t\ge2$.

The estimate \eqref{rmint2} may fail if $M$ grows slowly, for example if $M(s) = \log(2+s)$.  However there is an alternative estimate.  Let $\alpha>0$ and take $\e$ with $0 < \e < \min(\alpha,1)$.  For all sufficiently large $t$,
\begin{align*} \label{rmint3}
R(t)^\alpha M(R(t)) &\ge  R(t)^{\alpha-1} \big[R(t) M(R(t)) \log\big(R(t)M(R(t))\big)\big]^{1-\e} \\
&\ge c R(t)^{\alpha-\e} t^{1-\e} \ge ct^{1-\e}. \nonumber
\end{align*}
It follows that
\begin{equation} \label{rmint4}
\int_0^\infty R(t)^{-\alpha} M(R(t))^{-\beta}\, dt < \infty
\end{equation}
if $\alpha>0$, $\beta>1$.

\section{The Ingham--Karamata theorem in the $L^p$-setting}\label{se3}

First we need a very simple auxiliary estimate.
\begin{lemma}\label{auxil}
One has
\begin{equation}\label{exponent}
\int_{-\pi/2}^{\pi/2}e^{-t\cos\theta}\cos\theta\, d\theta \le \frac{C}{1+t^2}, \qquad t \ge 0.
\end{equation}
\end{lemma}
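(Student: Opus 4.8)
The plan is to reduce the integral to a standard one by symmetry and a change of variable, and then estimate it with elementary bounds for $\sin$. First I would dispose of bounded $t$: since $e^{-t\cos\theta}\le 1$ on $[-\pi/2,\pi/2]$ for every $t\ge 0$, the left-hand side of \eqref{exponent} is at most $\int_{-\pi/2}^{\pi/2}\cos\theta\,d\theta = 2$, while $C/(1+t^2)\ge C/2$ when $t\le 1$; hence \eqref{exponent} holds on $[0,1]$ as soon as $C\ge 4$.

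For $t\ge 1$ I would exploit the evenness of the integrand and the substitution $\phi=\pi/2-\theta$ to write
\[
\int_{-\pi/2}^{\pi/2}e^{-t\cos\theta}\cos\theta\, d\theta = 2\int_0^{\pi/2} e^{-t\sin\phi}\sin\phi\, d\phi .
\]
The point of this substitution is that the slowly decaying part of the original integrand, near $\theta=\pm\pi/2$ where $\cos\theta$ is small, is moved to the origin. Now Jordan's inequality $\sin\phi\ge 2\phi/\pi$ on $[0,\pi/2]$ controls the exponential from above, and $\sin\phi\le\phi$ controls the remaining factor, so
\[
2\int_0^{\pi/2} e^{-t\sin\phi}\sin\phi\, d\phi \le 2\int_0^\infty e^{-2t\phi/\pi}\phi\, d\phi = \frac{\pi^2}{2t^2}.
\]
Since $1+t^2\le 2t^2$ for $t\ge 1$, the right-hand side is at most $\pi^2/(1+t^2)$.

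Combining the two ranges and taking, say, $C=\pi^2$ yields \eqref{exponent} for all $t\ge 0$. I do not expect any genuine obstacle: the only idea needed is the change of variable that relocates the critical region to $\phi=0$, after which the claimed quadratic decay is exactly the elementary identity $\int_0^\infty e^{-c\phi}\phi\,d\phi=c^{-2}$ with $c=2t/\pi$.
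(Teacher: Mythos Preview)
Your proof is correct and follows essentially the same route as the paper's: the substitution $\phi=\pi/2-\theta$ together with Jordan's inequality $\sin\phi\ge 2\phi/\pi$ and $\sin\phi\le\phi$ to reduce to $\int_0^\infty e^{-2t\phi/\pi}\phi\,d\phi=\pi^2/(4t^2)$. The only difference is that you spell out the small-$t$ case explicitly, whereas the paper states just the bound $\pi^2/(2t^2)$ and leaves the combination with the trivial uniform bound implicit.
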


\begin{proof}
It suffices to note that
\begin{eqnarray*}
\int_{-\pi/2}^{\pi/2}e^{-t\cos\theta}\cos\theta\, d\theta&=&2\int_{0}^{\pi/2}e^{-t\sin\theta}\sin\theta\, d\theta \\
&\le& 2\int_{0}^{\pi/2}e^{-{2t\theta}/{\pi}} \theta\, d\theta < \frac{\pi^2}{2t^2}.
\end{eqnarray*}
\end{proof}

For $s \in \R$ and $y>0$, let  $P_y(s)=\frac{y}{\pi(s^2+y^2)}$ be the Poisson kernel for the upper half-plane.  For $f \in L^p(\mathbb R)$ where $1 \le p \le \infty$, the Poisson integral is defined by
\begin{equation*}
(P_y*f)(x)=\frac{1}{\pi} \int_{\mathbb R}\frac{y}{s^2+y^2}f(x+s)\, ds.
\end{equation*}
Recall that $\|P_y*f\|_{L^p}\le \|f\|_{L^p}$ for every $y>0$ \cite[Section VI.B]{Koo2}.

The following statement is our $L^p$-version of the Ingham--Karamata Theorem \ref{ingham}.

\begin{theorem}\label{lpingham}
Let $f \in L^p(\mathbb R_+, X)$, where $1 \le p \le \infty$.  Assume that $\widehat f$ admits an analytic extension to a neighbourhood of $0$ in $\C$, and let
$$
g(t)=\widehat f(0) -\int_{0}^{t} f(s) \, ds.
$$
Then $g \in L^p (\mathbb R_+, X)$.  If $p<\infty$, then  $g \in C_0(\mathbb R_+, X)$ and $g
\in L^r(\mathbb R_+,X)$ whenever $p \le r \le \infty$.
\end{theorem}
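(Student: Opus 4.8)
The plan is to run the Korevaar--Newman contour integral argument underlying Theorem~\ref{ingham}, but over a circle of \emph{small} radius (only local analyticity at $0$ is available) and with all estimates carried out in $L^p$. Fix $\delta>0$ with $\widehat f$ holomorphic on $\C_+\cup\{|z|<2\delta\}$, fix $R\in(0,\delta)$, and for $t>0$ put $P(z):=\int_0^t e^{-zs}f(s)\,ds$, an entire function with $P(0)=\int_0^t f(s)\,ds$. Since $\widehat f$ and $P$ are both holomorphic on $\{|z|\le R\}$ while $z\mapsto e^{tz}(1+z^2/R^2)z^{-1}$ has a simple pole at $0$ with residue $1$, the residue theorem gives
\[
 g(t)=\frac1{2\pi i}\oint_{|z|=R}\bigl(\widehat f(z)-P(z)\bigr)\,e^{tz}\Bigl(1+\frac{z^2}{R^2}\Bigr)\frac{dz}{z}.
\]
I would split the circle into the right half $\Gamma_+=\{|z|=R,\ \Re z>0\}$ and the left half $\Gamma_-=\{|z|=R,\ \Re z<0\}$. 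On $\Gamma_+$ one has $\Re z>0$, so the Laplace integral converges and $\widehat f(z)-P(z)=\int_t^\infty e^{-zs}f(s)\,ds$; on $\Gamma_-$ one treats $\widehat f(z)-P(z)$ simply as a difference of two holomorphic functions. This writes $g=G_+ + A - B$, where $G_+$ is the contribution (weighted by $e^{tz}(1+z^2/R^2)z^{-1}$) of $\int_t^\infty e^{-zs}f(s)\,ds$ over $\Gamma_+$, $A$ that of $\widehat f$ over $\Gamma_-$, and $B$ that of $P$ over $\Gamma_-$.

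The crux is that $G_+$ and $B$ are convolutions of $f$ against kernels lying in $L^1(\R_+)$. A routine application of Fubini's theorem gives
\[
 B(t)=\int_0^t\Phi(t-s)f(s)\,ds,\qquad G_+(t)=\int_0^\infty\Psi(-u)\,f(t+u)\,du,
\]
where $\Phi(v)=\frac1{2\pi i}\int_{\Gamma_-}e^{zv}(1+z^2/R^2)z^{-1}\,dz$ for $v\ge0$ and $\Psi(v)=\frac1{2\pi i}\int_{\Gamma_+}e^{zv}(1+z^2/R^2)z^{-1}\,dz$ for $v\le0$. Parametrising $z=Re^{i\theta}$ and using $|1+e^{2i\theta}|=2|\cos\theta|$, Lemma~\ref{auxil} yields $|\Phi(v)|\le C(1+R^2v^2)^{-1}$ and $|\Psi(-u)|\le C(1+R^2u^2)^{-1}$, both integrable (indeed in every $L^q(\R_+)$). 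Hence Young's inequality bounds $\|B\|_{L^p(\R_+,X)}$ and $\|G_+\|_{L^p(\R_+,X)}$ by a constant times $\|f\|_p$. For $A$ I would use the crude bound $\|\widehat f(z)\|\le C_R:=\max_{|z|=R}\|\widehat f(z)\|$ on $\Gamma_-$, so that Lemma~\ref{auxil} gives $\|A(t)\|\le C'(1+t^2)^{-1}$, which lies in $L^p(\R_+)$ for every $p\in[1,\infty]$. Summing the three pieces gives $g\in L^p(\R_+,X)$.

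For $p<\infty$ one has $\|f\|_{L^p([t,\infty))}\to0$ as $t\to\infty$, so splitting the integrals defining $G_+$ and $B$ (the latter at its midpoint) and applying H\"older's inequality together with the fact that $\Phi$ and $u\mapsto\Psi(-u)$ lie in $L^{p'}(\R_+)$ shows $G_+(t),B(t)\to0$; since also $A(t)\to0$, we get $g(t)\to0$, and as $g$ is continuous this gives $g\in C_0(\R_+,X)$. In particular $g$ is bounded, whence $g\in L^p(\R_+,X)\cap L^\infty(\R_+,X)\subseteq L^r(\R_+,X)$ for $p\le r\le\infty$ via $\|g\|_r\le\|g\|_p^{p/r}\|g\|_\infty^{1-p/r}$.

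The step I expect to be delicate is casting the boundary terms $G_+$ and $B$ in convolution form with a kernel genuinely in $L^1$: it is the Newman factor $1+z^2/R^2$ (equivalently the weight $\cos\theta$ in Lemma~\ref{auxil}) that makes $\Phi,\Psi$ decay like $v^{-2}$ rather than like $v^{-1}$, the latter being non-integrable. (One could also observe that $\Phi$ is dominated by a multiple of a Poisson kernel $P_y$ and invoke $\|P_y*f\|_p\le\|f\|_p$ in place of Young's inequality.) The remaining points — choosing $R$ small enough to stay in the region of analyticity, so that only a neighbourhood of $0$ is used, and justifying the interchanges of integration — are routine.
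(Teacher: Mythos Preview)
Your proof is correct and shares the core of the paper's argument: the Korevaar--Newman contour integral with the factor $1+z^2/R^2$, Lemma~\ref{auxil} to turn the $\cos\theta$-weighted exponential into an $O((1+v^2)^{-1})$ kernel, and the observation that convolution against such a kernel preserves $L^p$. The paper's $J_1,J_2$ are exactly your $G_+,B$, estimated via the Poisson-kernel bound rather than Young, which you already note as an equivalent alternative.

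The one genuine difference is the handling of the $\widehat f$-contribution on the left. The paper deforms that part of the contour to the segment $[-iR,iR]$ and obtains $O(t^{-2})$ by integrating by parts twice; this is why it carries the factor $(1+z^2/R^2)^n$ with $n\ge2$, so that the boundary terms at $\pm iR$ vanish. You instead leave $\widehat f$ on the left semicircle $\Gamma_-$ and read off the $O(t^{-2})$ decay directly from Lemma~\ref{auxil}, which lets you work with $n=1$ throughout. Your route is slightly more elementary for this theorem; the paper's deformation to the imaginary axis is, however, the first step toward the more elaborate contour used later in Theorem~\ref{x1}, where the segment is pushed into $\Omega_M$. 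Your $C_0$ argument, via H\"older on the convolution pieces and the decay of the kernels, is also a clean alternative to the paper's appeal to uniform continuity of $g$ (for $p>1$) and absolute convergence of $\int_0^\infty f$ (for $p=1$).
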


\begin{proof}
Without loss of generality we can assume that $\widehat f(0)=0$. Otherwise, we may consider $f -\chi_{[0,1]}\widehat f(0)$ instead of $f$.
By assumption $\widehat f$ extends analytically to a simply connected domain $\Omega \supset \C_+ \cup \{0\}$.  Take any integer $n\ge 2$, and $R>0$ such that $[-iR,iR] \subset \Omega$.  By the Cauchy integral formula, we have
\begin{equation} \label{Ciform}
g(t)=
\frac 1{2\pi i}\int_\gamma
\Bigl(1+\frac {z^2}{R^2}\Bigr)^n \Bigl(\widehat f(z) -\int_{0}^{t}
e^{-zs}f(s) \, ds\Bigr)e^{zt}\,\frac{dz}{z}, \quad t \ge 0,
\end{equation}
for any contour $\gamma\subset\Omega$ around $0$.
By deforming the contour to include a semi-circle of radius $R$ in $\C_+$ and then using the fact that the function $z\mapsto \int_{0}^{t}
e^{-zs}f(s)\, ds$ is entire and the function $z \mapsto \widehat f(z)/ z$ is analytic on $[-iR,iR]$, we infer that
\begin{multline}
2\pi \|g(t)\| \le
\Bigl\|\int_{\gamma_1} \Bigl(1+\frac {z^2}{R^2}\Bigr)^n \Bigl(\widehat
f(z) -\int_{0}^{t} e^{-zs}f(s) \, ds\Bigr)
e^{zt}\,\frac{dz}{z}\Bigr\|\\
 \null + \Bigl\|\int_{\gamma_2} \Bigl(1+\frac {z^2}{R^2}\Bigr)^n
\Bigl(\int_{0}^{t} e^{-zs}f(s) \, ds\Bigr)
e^{zt}\,\frac{dz}{z}\Bigr\|+ \Bigl\|\int_{I} \Bigl(1+\frac
{z^2}{R^2}\Bigr)^n \widehat f(z)
e^{zt}\,\frac{dz}{z}\Bigr\| \\
=: J_1(t)+J_2(t)+J_3(t), \label{sumj}
\end{multline}
where  $\gamma_1=R\mathbb T\cap \mathbb C_+$, $\gamma_2=R\mathbb T\cap \mathbb C_-$, and $I=[-iR,iR]$.

We estimate each of $J_1(t)$, $J_2(t)$ and $J_3(t)$ separately.
To estimate $J_1(t)$ we observe that for $z = Re^{i\theta}$,
$$
\left| 1 + \frac{z^2}{R^2} \right| = {2|\cos\theta|}.
$$
Hence by \eqref{exponent},
\begin{eqnarray} \label{j1est}
J_1(t) &\le&
C \int_{-\pi/2}^{\pi/2} \Bigl(\int_{0}^\infty
e^{-Rs\cos\theta}\|f(s+t)\|\,ds\Bigr)\, \cos\theta\,d\theta
\\
&\le& C \int_{0}^\infty \frac{ \|f(s+t)\|}{R^2s^2+1} \,ds \nonumber
\\
&\le&  \frac{C}{R} \left(P_{1/R}*h \right)(t),  \nonumber
\end{eqnarray}
where
\[
h(s)=\begin{cases} \|f(s)\|, &s\ge0, \\ 0, &s<0. \end{cases}
\]
Thus,
\begin{equation}\label{j1int}
\| J_1\|_{L^p}  \le \frac{C}{R}  \|f\|_{L^p}.
\end{equation}

We can estimate $J_2(t)$ in a similar way:
\begin{eqnarray} \label{j2est}
J_2(t) &\le&
C \int_{-\pi/2}^{\pi/2} \Bigl(\int_{0}^t e^{-Rs\cos\theta}\|f(t-s)\|\,ds\Bigr)\, \cos\theta\,d\theta
\\
& \le& C\int_{0}^t
\frac{\|f(t-s)\|}{R^2s^2+1} \,ds \nonumber
\\
&\le& \frac{C}{R}  \big(P_{1/R}* h\big)(t), \nonumber
\end{eqnarray}
so that
\begin{equation}\label{j2int}
\| J_2 \|_{L^p}  \le \frac{C}{R} \|f\|_{L^p}.
\end{equation}

Finally, integrating by parts twice and using that $1+z^2/R^2$ vanishes at $-iR$ and at $iR$, we obtain that
\begin{equation}\label{j3}
J_3(t) = \Bigl\| \frac{1}{t^2}
\int_{-R}^R \varphi''(s) e^{ist} \, ds
\Bigr\| \le  \frac{C_{R,f}}{t^2},
\end{equation}
where $\varphi(s) = (1-s^2/R^2)^n \widehat f(is)/s$.  Since $J_3$ is bounded on $(0,1)$, the inequalities \eqref{j1int}, \eqref{j2int} and \eqref{j3} imply that $g \in L^p(\mathbb R_+, X)$.

If $1<p<\infty$, then $g$ is H\"older continuous of exponent $1 - p^{-1}$ and in particular $g$ is uniformly continuous.  Since $g \in L^p(\R_+,X)$, it follows that $g \in C_0(\R_+,X)$.  If $p=1$, then $\lim_{t\to\infty} g(t)$ exists.  Since $g \in L^1(\R_+,X)$ it follows that the value of the limit is $0$.  In each case $g$ is a bounded function in $L^p(\R_+,X)$, so it is also in $L^r(\R_+,X)$ when $p \le r \le \infty$.
\end{proof}

When $p=\infty$, we cannot conclude that $g \in C_0(\R_+,X)$ in Theorem \ref{lpingham} (for example, let $f(t) = e^{it}$, so $\widehat f(\lambda) = (\lambda-i)^{-1}$ and $g(t) = ie^{it}$).  In this case one needs further assumptions to conclude that $g \in C_0(\R_+,X)$.  See Theorem \ref{inghamsing} and \cite{ArBa88} for results of this type.

\section{$L^p$-tauberian theorems with rates}\label{se4}

In this section we give a quantified version of Theorem \ref{lpingham}.  Under the assumptions of that result, the value of $R$ in the proof is confined to a small range.  If $\widehat f$ extends analytically across the whole of $i\R$, then $R$ can be chosen to be arbitrarily large and to depend on $t$.  Under additional assumptions on the domain $\Omega$, where $\widehat f$ extends analytically, and on the growth of $\widehat f$ in $\Omega$, we show that $g(t):=\widehat f(0)-\int_{0}^t f(s)\, ds$ belongs to a certain weighted $L^p$-space with a weight $w$ determined in terms of the shape of $\Omega$ and the bound for $\widehat f$ in $\Omega$.

When $p=\infty$, the appropriate quantified version of Theorem \ref{lpingham} is  Theorem \ref{inghamrates}.  The condition \eqref{resbound} involves the function $M$ to measure both the shape of the region $\Omega_M$ to which $\widehat f$ can be extended and as a bound for the resolvent.  However these two factors do not always play an equal role, and the shape has more effect when $M$ grows slowly.   Examination of the proof of Theorem \ref{inghamrates} in \cite{BaDu} or \cite[Theorem 4.4.6]{ABHN01} shows that \eqref{resbound} can be replaced by
\[
 \|\widehat f(\lambda)\|\le K (1+|\Im\lambda|) M_{\log}(|\Im \lambda |), \qquad \lambda \in \Omega_M.
 \]
Here $K$ can be any constant, and the constant $C$ in \eqref{Mlogest} may depend on $K$.  This condition is certainly satisfied if $\widehat f$ grows slower than linearly, so the rate in \eqref{Mlogest} is independent of the rate of growth so long as it is sublinear and the region is fixed.

Now assume that
\[
 \|\widehat f(\lambda)\|\le M_2(|\Im \lambda |), \qquad \lambda \in \Omega_{M_1},
 \]
where $M_1(s) = K_1(1+s)^\alpha$ for some $K_1$ and  $\alpha>0$, and $M_2$ is any increasing continuous function.  Then for any $\e>0$, there exists $C_\e>0$ such that
\[
 \|\widehat f(\lambda)\|\le C_\e (M_2(1+|\Im \lambda |))^\e (1+|\Im\lambda|)^\alpha, \qquad \lambda \in \Omega_{C_\e M_1}.
 \]
This is proved in \cite[Lemma 3.4]{BoTo10} in the case when $M_2(s) =K_2(1+s)^\beta$ and the general proof is almost the same.  Instead of applying Theorem \ref{inghamrates} with $M(s) = \max(M_1(s), M_2(s))$ one can apply it with
\[
M(s) = C_\e' (M_2(1+s))^\e (1+s)^\alpha,
\]
where $C_\e'$ grows polynomially as $\e\to0+$.  If $M_2(s) = K_2(1+s)^\beta$ where $\beta>\alpha$ this improves the estimate \eqref{Mlogest} for $g(t) := \widehat f(0) - \int_0^t f(s) \, ds$ from $\|g(t)\| = O(t^{-1/\beta})$ to $\|g(t)\| = o(t^{-\gamma})$ for each $\gamma<1/\alpha$.  If $M_2$ grows exponentially, it improves the estimate from $\|g(t)\| = O((\log t)^{-1})$ to  $\|g(t)\| = o((\log t)^{-1})$.

On the other hand, when $f(\lambda)$ is the resolvent $R(\lambda, A)$ of the generator $A$ of a bounded $C_0$-semigroup, it is rather straightforward that it suffices that \eqref{resbound} holds for $\lambda \in i\R$. Indeed, if $\|R(is, A)\| \le M(|s|)$ for $s \in \mathbb R$, then the Neumann series expansion shows that, for any $a > 1$, the region $\Omega_{aM}$ is contained in the resolvent set of $A$  and $\|R(\lambda, A)\|\le \frac{a}{a-1} M(|\Im \lambda|)$ for $\lambda \in \Omega_{aM}$.  So in this case it is natural to describe the region and the resolvent growth by the same function $M$, up to constant multiples.  We shall return to this situation in Section \ref{sect6}.

In general, in Theorems \ref{x1} and \ref{lpsing} below, one can assume that the region $\Omega_M$ and the growth of $\widehat f$ in $\Omega_M$ at infinity, or near a singularity, are determined by two different functions $M_1$ and $M_2$ respectively (taking into account the interplay between these two mentioned above). The approach in this paper then estimates the rate of decay by a weight $w$ which is constrained by $M_1$ and $M_2$.  As indicated by Theorems \ref{borto} and \ref{example}, given that $M_1=M$ a natural weight is $w = w_{M,\log}$.  In Theorems \ref{x1} and \ref{lpsing} below, we give the most general form of $M_2$ known to us that allows the conclusion for the weight $w_{M,\log}$.  In other situations where $M_2$ has much faster or much slower growth our methods could produce a different weight $w$. However our results would become more technical and less transparent and, in this paper, we do not see any gain in such generality.

Theorem \ref{x1} below extends Theorem \ref{inghamrates} to include cases when $1 \le p < \infty$ and by allowing a slightly more general form of \eqref{resbound} in which $M_2$ is allowed to grow as fast as a polynomial function of $M_1$ and $s$.  In Theorem \ref{lpsing} we give a quantified version of Theorem \ref{inghamsing}, which is new even for $p=\infty$, to the best of our knowledge.  In the next section we shall show that Theorem \ref{x1} is optimal in a strong sense which will be made precise.

To prove Theorem \ref{x1}, we will need a few very basic facts on Carleson measures, which can be found in \cite{Garnett}, for example.  Recall that, if $f \in L^p (\mathbb R)$,  $P_y*f$ is the  Poisson integral of $f$, and $\sigma$ is a Carleson measure on the upper half-plane $\mathbb C^+$, then by the Carleson embedding theorem (see \cite[Theorem I.5.6]{Garnett}),
\begin{equation} \label{Cembed}
\int_{\mathbb C^+} |(P_y*f) (z)|^p \, d\sigma(z) \le C_p \int_{\mathbb R} |f(s)|^p \, ds, \qquad z=x+iy.
\end{equation}
An example of such a measure is given by $\sigma (B)=l (B\cap \Gamma)$ for Borel subsets $B$ of $\C_+$, where $l$ is the Lebesgue length measure on a curve  $\Gamma=\{t + i \gamma(t): t \in [a,\infty)\}$,  satisfying
\[
l(\Gamma(t,\e)) \le C \e,  \qquad t\ge a, \e > 0,
\]
where $\Gamma(t,\e) = \{ s+i\gamma(s): |s-t|<\e\}$.  Such curves are often called {\it Carleson curves}, see \cite{BoKa97}. Note that $\Gamma$ is a Carleson curve if  $\gamma \in C^1([a,\infty))$ and $|\gamma'(t)|\le C, t \in [a,\infty)$, for some $C>0$; or if $\gamma$ is bounded and monotonic.

The proof will use some of the ingredients of Theorem \ref{lpingham}, with the parameter $R := R(t)$ now being chosen to depend on $t$.  The Carleson embedding \eqref{Cembed} will be combined with \eqref{j1est} and \eqref{j2est} to estimate $L^p$-norms of $J_1$ and $J_2$.  However, in order to use \eqref{j3} the growth of $R(t)$  has to be slow, and then the estimates for $J_1$ and $J_2$ become inefficient.  Consequently the proof requires a more delicate choice of contour, and this follows the same lines as \cite{BaDu} (see also \cite[Theorem 4.4.14]{ABHN01}).

\begin{theorem}\label{x1} Let $X$ be a Banach space, and
let  $f \in L^p(\mathbb R_+, X)$ where $1\le p\le\infty$. Let $M : \R_+ \to [2,\infty)$ be a continuous, increasing function such that $\widehat f$ admits an analytic extension to $\Omega_M$ and
\begin{equation}\label{omega}
\|\widehat f (z)\| \le K (1 + |\Im z|)^\alpha M(|\Im z|)^{\beta}, \qquad z\in \Omega_M,
\end{equation}
for some non-negative $K$, $\alpha$ and $\beta$.  Let
$$
g(t)=\widehat f(0) -\int_{0}^{t} f(s) \, ds.
$$
Then
\begin{equation*}
g \in L^p(\R_+,w,X) \cap C_0(\R_+,X),
\end{equation*}
where $w(t) = \wml(kt)$ for some $k>0$ depending only on $\alpha,\beta$ and $p$.  Moreover there is a number $C$ depending only on $p,M,K,\alpha,\beta$, such that
\begin{equation} \label{lprate}
\|g\|_{L^p(\R_+,w,X)} \le C\|f\|_p.
\end{equation}
\end{theorem}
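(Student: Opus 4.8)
The plan is to follow the contour–deformation scheme of Theorem \ref{lpingham}, but now letting the radius of the circular arcs depend on $t$ and, crucially, replacing the vertical segment $I=[-iR,iR]$ by a contour that runs into the left half-plane along the boundary-like curve $\Re z = -1/M(|\Im z|)$, exactly as in \cite{BaDu}. Write $R=R(t):=\wml(kt)$ for a constant $k$ to be fixed at the end; the relevant asymptotics of $R$ are recorded in \eqref{rmint} (giving $M(R(t))\le Ct/\log t$) and, through the argument leading to \eqref{rmint4}, the summability facts for powers of $R$ and $M(R)$. As in \eqref{Ciform}–\eqref{sumj} I would start from the Cauchy integral formula with the smoothing factor $(1+z^2/R^2)^n$ (choosing the integer $n\ge 2$ large enough in terms of $\alpha,\beta$ so that this factor kills the polynomial growth \eqref{omega} along the part of the contour lying in $\Omega_M$), deform to the contour $\gamma_1\cup\gamma_2\cup\Gamma$ where $\gamma_1,\gamma_2$ are the two semicircular arcs $R\mathbb T\cap\C_\pm$ and $\Gamma$ is the portion of $\partial\Omega_M$ with $|\Im z|\le R$, and obtain $2\pi\|g(t)\|\le J_1(t)+J_2(t)+J_3(t)$ with $J_1,J_2$ the contributions of $\gamma_1,\gamma_2$ (treated exactly as in \eqref{j1est}, \eqref{j2est}) and $J_3(t)$ the contribution of $\Gamma$.

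For $J_1$ and $J_2$ the estimates \eqref{j1est} and \eqref{j2est} give pointwise bounds of the form $J_i(t)\le \frac{C}{R(t)}\,(P_{1/R(t)}\ast h)(t)$ with $h=\|f\|\chi_{\R_+}$; so $w(t)J_i(t)\le C\,\frac{w(t)}{R(t)}(P_{1/R(t)}\ast h)(t)$, and since $w\asymp R$ this is $\le C (P_{1/R(t)}\ast h)(t)$. To turn this into an $L^p(\R_+,w)$ bound I would use the Carleson embedding \eqref{Cembed}: the curve $t\mapsto t+ i/R(t)$ (equivalently a reparametrisation of the graph of $1/\wml(k\,\cdot)$, which is bounded and monotone, hence a Carleson curve by the remark after \eqref{Cembed}) carries a Carleson measure $\sigma$, and $\int_{\R_+} (P_{1/R(t)}\ast h)(t)^p\,dt \le \int_{\C^+}(P_y\ast h)(z)^p\,d\sigma(z)\le C_p\|f\|_p^p$. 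This handles $1\le p<\infty$; the case $p=\infty$ follows directly from $\|P_y\ast h\|_\infty\le\|h\|_\infty$. Thus $\|wJ_1\|_p,\|wJ_2\|_p\le C\|f\|_p$.

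The term $J_3$ — the integral along $\Gamma\subset\partial\Omega_M$ — is the main obstacle, just as it is in the $p=\infty$ proof of \cite{BaDu}. Here one uses that on $\Gamma$ one has $\Re z=-1/M(|\Im z|)<0$, so $|e^{zt}|=e^{-t/M(|\Im z|)}$ decays, and that by \eqref{omega} and the smoothing factor the integrand is at worst polynomially large; one integrates by parts (using that $1+z^2/R^2$ vanishes at the two endpoints $\pm iR$ of $\Gamma$, as in \eqref{j3}) to gain the factor $1/t^2$ from the oscillation near the endpoints, and on the bulk of $\Gamma$ the exponential $e^{-t/M(|\Im z|)}$ together with $\int \dots$ over $|\Im z|\le R(t)=\wml(kt)$ is controlled precisely because $M(R(t))$ is comparable to $t/\log t$; choosing $k$ small forces $t/M(|\Im z|)$ to be large on the relevant range and produces a bound $J_3(t)\le C\,t^{-2}+C\,e^{-ct/M(R(t))}\,(\text{polynomial in }t)\le C/R(t)^2$ (using \eqref{rmint}), so that $t\mapsto w(t)J_3(t)$ is bounded by $C/R(t)$, which lies in $L^p(\R_+)$ for $p<\infty$ by \eqref{rmint4} and is trivially bounded for $p=\infty$; carrying the $\|f\|_\infty$-dependence (via $\widehat f(0)$ and the reduction $f\mapsto f-\chi_{[0,1]}\widehat f(0)$, cf. the first lines of the proof of Theorem \ref{lpingham}) through this estimate gives $\|wJ_3\|_p\le C\|f\|_p$. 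Finally, adding the three estimates yields \eqref{lprate}; and $g\in C_0(\R_+,X)$ follows from $g\in L^p(\R_+,X)$ (which is \eqref{lprate} with the trivial weight, or Theorem \ref{lpingham}) together with the uniform continuity / limit-existence argument at the end of the proof of Theorem \ref{lpingham}, since the hypotheses here are stronger than there. The delicate points to get right are the precise choice of $n=n(\alpha,\beta)$ and $k=k(\alpha,\beta,p)$, the verification that the reparametrised boundary curve is a Carleson curve with constant independent of $t$, and the endpoint integration by parts on $\Gamma$ where $\Gamma$ meets the arcs.
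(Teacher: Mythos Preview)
Your handling of $J_1$ and $J_2$ via Poisson kernels and the Carleson embedding is correct and matches the paper exactly. The gap is in the remaining contour. First, $\Gamma=\{z\in\partial\Omega_M:|\Im z|\le R\}$ has endpoints at $-1/M(R)\pm iR$, not at $\pm iR$, so $\gamma_1\cup\Gamma$ is \emph{not} a closed path for the $\widehat f$ integral. The paper inserts two short horizontal segments $\gamma_3=(\pm iR+\R)\cap\Omega_M\cap\C_-$ bridging $\pm iR$ to $-1/M(R)\pm iR$, and estimates the resulting term $I_3(t)$ directly: on $\gamma_3$ one has $|1+z^2/R^2|^n\le C(s/R)^n$ with $s=-\Re z\in(0,1/M(R))$, giving $I_3(t)\le CR(t)^{\alpha-n-1}M(R(t))^{\beta-n-1}$, and hence $w\cdot I_3\in L^p$ by \eqref{rmint4} once $n$ is large enough.

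Second, your proposed integration by parts on $\Gamma$ cannot work as written. The device in \eqref{j3} relied on $(1+z^2/R^2)^n$ vanishing at the endpoints $\pm iR$ of the segment $I$; but at the actual endpoints $-1/M(R)\pm iR$ of $\Gamma$ this factor does \emph{not} vanish, so no boundary terms disappear, and in any case $\Gamma$ is a curve whose shape and endpoints move with $t$. The paper instead bounds the boundary integral $I_4(t)$ by a direct pointwise estimate: on $\gamma_4$ one has $|e^{zt}|\le\exp(-t/M(R(t)))$, and since $M_{\log}(R(t))=kt$ this equals $(R(t)M(R(t)))^{-1/k}$. Combined with \eqref{omega} and the length of $\gamma_4$ one obtains $I_4(t)\le C\,R(t)^{\alpha+1-1/k}M(R(t))^{\beta-1/k}$; choosing $k^{-1}>\max(\alpha+2,\beta+1)$ then puts $w\cdot I_4\in L^p$ via \eqref{rmint4}. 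No oscillatory $t^{-2}$ gain is used or needed for this piece. (As a side remark, even the target $w(t)J_3(t)\le C/R(t)$ you aim for would not lie in $L^p$ for all $p\ge1$: the criterion \eqref{rmint4} requires the exponent on $M(R(t))$ to exceed $1$, and $R(t)^{-1}$ alone need not be integrable---take $M(s)=s^\alpha$ with $\alpha\ge1$ and $p=1$.)
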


\begin{proof} We shall present the proof for the case when $p < \infty$. The changes required for $p=\infty$ are straightforward, and most of the proof in that case is the same as in \cite{BaDu} and \cite[Theorem 4.4.6]{ABHN01}.

Without loss of generality we assume that $\widehat f$ is continuous up to $\partial\Omega$.  This can be arranged by replacing $M$ by $aM$, where $a>1$.
For $t > 0$ we shall use  (\ref{Ciform}) for some $n\in\N$ to be chosen later,  and
$$
R=R(t)= \wml(kt)
$$
for some (small) positive number $k$ to be chosen later.

Let $\gamma_1=R\mathbb T\cap \mathbb C_+$, $\gamma_2 =R\mathbb T \cap
\mathbb C_-$, $\gamma_3=(\pm iR+\mathbb R)\cap \Omega \cap \mathbb C_-$,  $\gamma_4= \{z \in \partial\Omega: |\operatorname{Im} z| < R\}$.  We shall apply \eqref{Ciform}, and also \eqref{sumj} in an adjusted form where
the interval $I = [-iR,iR]$ is replaced by $\gamma_3 \cup \gamma_4$.  Then
$$
2\pi \|g(t)\| \le J_1(t)+J_2(t)+I_3(t)+I_4(t),
$$
where $J_1(t)$ and $J_2(t)$ are as in the proof of Theorem \ref{lpingham}, and
$$
I_j(t) = \Bigl\|\int_{\gamma_j} \Bigl(1+\frac
{z^2}{R^2}\Bigr)^n \widehat f(z)
e^{zt}\,\frac{dz}{z}\Bigr\|, \qquad j=3,4.
$$

Now, by \eqref{j1est} and \eqref{Cembed},
\begin{multline} \label{i1est}
\int_0^{\infty} J_1(t) ^p \wml(kt)^p \,dt
 \le c \int_0^\infty \frac{(P_{1/R(t)}*h)(t)^p}{R(t)^p} \wml(kt)^p \,dt
 \\
= c\int_0^\infty (P_{1/R(t)}*h)(t)^p \, dt
\le c\int_\R h(t)^p \, dt \; = \; c \|f\|_{L^p}^p,
\end{multline}
where
$h=\|f\|_X$ on $\mathbb R_+$, $h=0$ on $\mathbb R_-$.  Here we have used \eqref{Cembed} which depends on $\Gamma:= \{t+i/R(t): t \ge 0\}$ being a Carleson curve, and that follows from monotonicity of $R$.
Similarly, \eqref{j2est} leads to
\begin{equation} \label{i2est}
\int_0^{\infty} J_2(t)^p \wml(kt)^p \,dt  \le  C \|f\|_{L^p}^p.
\end{equation}
Furthermore,
\begin{multline}  \label{i3est}
I_3(t) \\= \Bigl\| \int_{0}^{1/M(R(t))} \Bigl( 1 + \frac{(-s \pm iR(t))^2}{R(t)^2} \Bigr)^n \frac {\widehat f(-s\pm iR(t)) e^{(-s \pm iR(t))t}} {-s \pm iR(t)} \, ds \Bigr\|  \\
 \le C\int_{0}^{1/M(R(t))} \frac{R(t)^\alpha M(R(t))^{\beta} s^n}{R(t)^{n+1}} \,ds
  \le \frac{C}{R(t)^{n+1-\alpha}M(R(t))^{n+1-\beta}}.
\end{multline}
If $n>\alpha$ and $n> \beta - 1 + 1/p$ we obtain from \eqref{rmint4} that
\begin{multline} \label{i3est2}
\int_0^{\infty} I_3(t) ^p \,\wml(kt)^p \, dt  \\ \le C \int_0^{\infty} \frac{dt}{M(R(t))^{(n+1-\beta)p} R(t)^{(n-\alpha)p}}   < \infty.
\end{multline}

Finally, for $z \in \gamma_4$, $z = -1/M(|s|) + is$ where $|s| \le  R(t)$, $|\widehat f(z)| \le K R(t)^\alpha M(R(t))^\beta$, and
\[
\left| e^{zt} \right| = \exp \left( - t/M(|s|) \right) \le \exp \left( -t/M(R(t)) \right). \\
\]
Moreover, $|1+z^2/R(t)^2|$ and $|z^{-1}|$ are bounded independently of $t$, and the length of $\gamma_4$ is at most $2+ 2R(t) \le 4 R(t)$.  Hence
\begin{equation} \label{i4est1}
I_4(t) \le 4K R(t)^{\alpha+1} M(R(t))^{\beta} \exp \left( -t/M(R(t)) \right).
\end{equation}
For  $t\ge k^{-1} M_{\log}(1)$, we have $M_{\log}(R(t)) = kt$ and
\begin{eqnarray*}
I_4(t) &\le& 4K R(t)^{\alpha+1} M(R(t))^{\beta} \exp \left(- k^{-1} \log(R(t)M(R(t))) \right) \\
&=& 4K R(t)^{-k^{-1}+\alpha+1} M(R(t))^{-k^{-1}+\beta}.
\end{eqnarray*}
It follows from \eqref{rmint4} that
\begin{multline} \label{i4est}
\int_{0}^{\infty} I_4(t) ^p \, \wml(kt)^{p}\,dt \\ \le c + c \int_{0} ^\infty \frac {R(t)^p}{R(t)^{p(k^{-1}-\alpha-1)}M(R(t))^{p(k^{-1}-\beta)}} \, dt < \infty,
\end{multline}
if $k < (\alpha+2)^{-1}$ and $k < (\beta+1)^{-1}$.  Given any such $k$, \eqref{i1est}, \eqref{i2est}, \eqref{i3est2} and \eqref{i4est} show that
\begin{multline*}
\int_0^\infty \|g(t)\|^p \wml(kt)^{p}
\,dt\\
 \le c \int_0^{\infty} (J_1(t)+J_2(t)+I_3(t)+I_4(t))^p \wml(kt)^{p}
\,dt \le C < \infty,
\end{multline*}
for some
$C$ depending only on $p,M,K,k,\alpha,\beta$ and $\| f \|_{L^p}$.  It follows that $g \in C_0(\R_+,X)$ as in the proof of Theorem \ref{lpingham}.
\end{proof}

Next we want to give a quantified version of Theorem \ref{inghamsing} in the $L^p$-setting.  To the best our knowledge, Theorem \ref{lpsing} is new even in the case when $p=\infty$.  Martinez \cite{Ma11} has given a quantified result in the $L^\infty$-setting, but she estimates a rate of convergence in a limit which is different from \eqref{singlim}.

Let $\eta \in \R \setminus \{0\}$ and $M: [0,\infty) \to [2,\infty)$ be a continuous, increasing function.  We assume, without loss of generality, that $M$ is constant on $[0,|\eta|+1]$. Define
\begin{eqnarray*}
\widetilde M_\eta(s) &=&  \max \left( M(|s|), M \big(|s-\eta|^{-1} \big) \right), \qquad s\in\R, s \ne \eta, \\
&=& \begin{cases} M(|s|), &\qquad |s-\eta|\ge 1, \\ M(|s-\eta|^{-1}), &\qquad 0<|s-\eta|\le1, \end{cases} \\
\widetilde \Omega_{M,\eta} &=& \Big\{z \in\C : \Im z \ne \eta, \Re z > - \frac {1}{\widetilde M_\eta(\Im z)} \Big\} \cup \{s+i\eta : s>0\}.
\end{eqnarray*}

\begin{theorem} \label{lpsing}
Let $X$ be a Banach space, and $f \in L^p(\mathbb R_+, X)$ where $1 \le p \le \infty$.
Let
\[
F(t) = \int_0^t e^{-i \eta s} f(s) \, ds.
\]
 Assume that $F \in L^p(\R_+,X)$ and that $\widehat f$ admits an analytic extension to $\widetilde\Omega_{M,\eta}$ satisfying the following for all $z\in \widetilde\Omega_{M,\eta}$, and for some $c$ and $\beta\ge0$:
\begin{equation} \label{omegasing}
\|\widehat f (z)\| \le
\begin{cases} c M(|\Im z|)^{\beta},  \qquad & |\Im z-\eta| \ge 1, \\ c M(|\Im z-\eta|^{-1})^{\beta}, \qquad & 0 < |\Im z-\eta| \le 1. \end{cases}
\end{equation}
Let
$$
g(t) =\widehat f(0) -\int_{0}^{t} f(s) \, ds.
$$
\begin{enumerate}[\rm(a)]
 \item \label{lpsinga} If $M(s) \ge \kappa s^\alpha$ for some $\alpha>1,\kappa>0$, then there exists $k>0$ such that $g \in L^p(\R_+,w,X)$
where $w(t) = \wml(kt)$.
\item \label{lpsingb}  If $M(s) = \max(c_1,c_2s)$ for some $c_1,c_2>0$ and $w(t) = t^\gamma$ for some $\gamma \in (0,1)$, then $g \in L^p(\R_+,w,X)$.
\end{enumerate}
\end{theorem}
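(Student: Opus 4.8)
The plan is to follow the contour‑integration scheme of Theorem \ref{x1}, but with the contour modified to pass around the singularity at $i\eta$. The starting point is again the Cauchy integral formula \eqref{Ciform}, applied to the function $f$ (or rather to $f-\chi_{[0,1]}\widehat f(0)$, so that we may assume $\widehat f(0)=0$); but now we must also build into the formula the cancellation that the hypothesis $F\in L^p(\R_+,X)$ provides near $z=i\eta$. Concretely, I would write $g(t)=\widehat f(0)-\int_0^t f$ and split the contour at scale $R=R(t)=\wml(kt)$ (in case (a)) or $R(t)=t^{\gamma'}$ for a suitable $\gamma'$ related to $\gamma$ (in case (b)): a large semicircle $\gamma_1=R\T\cap\C_+$, its mirror arc $\gamma_2=R\T\cap\C_-$, the horizontal pieces $\gamma_3$ entering $\widetilde\Omega_{M,\eta}\cap\C_-$ from $\pm iR$, and the boundary piece $\gamma_4=\{z\in\partial\widetilde\Omega_{M,\eta}:|\Im z|<R\}$. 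The arcs $\gamma_1,\gamma_2$ are handled exactly as in Theorems \ref{lpingham} and \ref{x1}, using Lemma \ref{auxil} and the Poisson–Carleson estimate \eqref{Cembed} along the Carleson curve $\{t+i/R(t):t\ge0\}$; their $L^p(w)$‑norms are bounded by $C\|f\|_{L^p}$ provided $R(t)$ is monotone, which it is. The piece $\gamma_3$ is estimated as in \eqref{i3est}–\eqref{i3est2}, now using the growth bound \eqref{omegasing} and the integrability criterion \eqref{rmint4}; this forces $n$ to be chosen large (larger than $\beta$ plus a $p$‑dependent margin), exactly as before.

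The genuinely new work is the piece $\gamma_4$, and it splits into two zones: the part with $|\Im z-\eta|\ge1$, where $\Re z=-1/M(|\Im z|)$ and $|\widehat f(z)|\le cM(|\Im z|)^\beta$, and the part with $0<|\Im z-\eta|\le1$, where $\Re z=-1/M(|\Im z-\eta|^{-1})$ and $|\widehat f(z)|\le cM(|\Im z-\eta|^{-1})^\beta$. On the first zone the argument is essentially that of \eqref{i4est1}–\eqref{i4est}: $|e^{zt}|\le\exp(-t/M(R(t)))$, the length of this part of $\gamma_4$ is $O(R(t))$, and choosing $k$ small (so that $k^{-1}$ beats $\alpha+\beta+$constants) makes the resulting integral converge by \eqref{rmint4}. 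On the second zone, near $i\eta$, the exponential factor $|e^{zt}|=\exp(-t/M(|\Im z-\eta|^{-1}))$ degenerates to $1$ as $\Im z\to\eta$, so pointwise bounds alone are useless; this is where the hypothesis $F\in L^p(\R_+,X)$ must enter. The device, as in the proof of Theorem \ref{inghamsing} (cf.\ \cite{ArBa88}), is to integrate by parts in the variable along $\gamma_4$: write $\widehat f(z)e^{zt}/z$ on this arc in terms of the Laplace transform of $e^{-i\eta s}f(s)$ — that is, in terms of $\widehat F_\eta(z-i\eta)$ where $F_\eta(s)=e^{-i\eta s}f(s)$ and $\widehat F_\eta(w)=\int_0^\infty e^{-ws}F_\eta(s)\,ds$ — and use $\int_0^\infty e^{-ws}F_\eta(s)\,ds = w\int_0^\infty e^{-ws}F(s)\,ds$. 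Since $F\in L^p(\R_+,X)$, its Laplace transform and the associated ``$g$ for $F$'' are controlled by Theorem \ref{lpingham}/\ref{x1} applied to $F$ at the frequency $\eta$, giving the extra decay in $t$ that compensates the loss of the exponential near $i\eta$. In effect, one reduces the behaviour near $i\eta$ to an instance of the already‑proved rated tauberian theorem for the bounded (indeed $L^p$) function $F$.

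After the four pieces are bounded, one assembles $\|g(t)\|\le(2\pi)^{-1}(J_1(t)+J_2(t)+I_3(t)+I_4(t))$, raises to the $p$‑th power, multiplies by $\wml(kt)^p$ (or $t^{\gamma p}$ in case (b)), integrates, and uses the four individual $L^p(w)$ bounds together with the elementary inequality $(a_1+a_2+a_3+a_4)^p\le 4^{p-1}\sum a_j^p$; the constants $n$ and $k$ (resp.\ $\gamma'$) are fixed once and for all in terms of $p,\alpha,\beta$ at the start. Case (b) is a specialization in which $M(s)=\max(c_1,c_2s)$, so $\wml(t)\asymp t$ up to logarithmic factors and one checks directly that the weaker weight $w(t)=t^\gamma$, $\gamma<1$, is admissible — here the second zone of $\gamma_4$ is a bounded arc of fixed size (since $M(|\Im z-\eta|^{-1})\asymp|\Im z-\eta|^{-1}$ stays comparable to a constant only in a neighbourhood of $i\eta$ of fixed radius), and the $F\in L^p$ input gives the $t^\gamma$‑rate for any $\gamma<1$ by the polynomial‑rate version of the $p=\infty$ theory adapted to $L^p$. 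The main obstacle, as indicated, is the analysis on the near‑$i\eta$ portion of $\gamma_4$: controlling the contribution where the exponential damping vanishes, and doing so in the $L^p$‑norm (not merely pointwise) so that the final estimate is $\|g\|_{L^p(\R_+,w,X)}\le C(\|f\|_{L^p}+\|F\|_{L^p})$; everything else is a routine adaptation of the proof of Theorem \ref{x1}.
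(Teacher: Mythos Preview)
Your outline is on the right track for the contours $\gamma_1,\gamma_2,\gamma_3$ and for the part of $\gamma_4$ away from $i\eta$, but the treatment of the near-$i\eta$ contribution has a genuine gap. The contour $\partial\widetilde\Omega_{M,\eta}$ actually approaches the point $i\eta$ (since $\Re z=-1/M(|\Im z-\eta|^{-1})\to 0$ as $\Im z\to\eta$), and $\widehat f$ is singular there; so an integral along that arc is not controlled by the hypotheses, and no ``integration by parts in the variable along $\gamma_4$'' will repair this. Moreover, the black-box reduction you suggest --- apply Theorem~\ref{lpingham} or \ref{x1} to $F$ --- does not work: those theorems require $\widehat F$ to extend analytically across $0$, and nothing in the assumptions gives that (indeed $\widehat F(w)=\widehat f(w+i\eta)/w$, and $\widehat f$ is only assumed analytic on $\widetilde\Omega_{M,\eta}$, which excludes $i\eta$).

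What the paper does (and what the proof of Theorem~\ref{inghamsing} in \cite{ArBa88} actually does, despite your citation) is different in two essential ways. First, one inserts a \emph{second} mollifying factor $\bigl(1+\e^2/(z-i\eta)^2\bigr)^n$ into the Cauchy formula, with $\e=1/R(t)$; this vanishes to order $n$ at $z=i(\eta\pm\e)$ and, on the circle $|z-i\eta|=\e$, equals $(2\cos\theta)^n$. Second, one deforms the contour to avoid $\partial\widetilde\Omega_{M,\eta}$ near $i\eta$ altogether, replacing it by two small semicircles $\{|z-i\eta|=\e\}\cap\C_\pm$ together with short horizontal segments at height $\eta\pm\e$. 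On the semicircles one integrates by parts \emph{in the $s$-variable} to rewrite $e^{zt}\bigl(\widehat f(z)-\widehat f_t(z)\bigr)$ in terms of $F$, obtaining a bound of the form $\e\|F(t)\|+\e(P_{1/\e}*\|F\|)(t)$; the Poisson--Carleson estimate then gives $J_6\cdot R,\,J_7\cdot R\in L^p$. The extra factor is what makes these contributions small of order $\e$ rather than $O(1)$, and the short horizontal segments $\gamma_5$ yield a term $C M(R)^\beta R^n/t^{n+1}$ whose $L^p$-integrability against the weight is exactly what forces the case distinction between (a) and (b). Without the extra factor and the small-circle detour, the hypothesis $F\in L^p$ never enters the estimate in a usable way.
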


\begin{proof}
The proof of this combines ideas from the proof of Theorem \ref{x1} above with ideas from \cite{ArBa88} (see \cite[Theorem 4.4.8]{ABHN01}).
In place of \eqref{Ciform}, we use a formula
\[
g(t)= \sum_{j=1}^7 I_j(t),
\]
where
\[
I_j(t) = \frac {\eta^{2n}}{2\pi i(\eta^2-\e^2)^n}\int_{\gamma_j}
\Bigl(1+\frac {z^2}{R^2}\Bigr)^n \Bigl(1 + \frac {\e^2}{(z-i\eta)^2} \Bigr)^n g_j(t) e^{zt}\,\frac{dz}{z}.
\]
The values of $n$ and $k$ will be the same as in Theorem \ref{x1}, depending only on $p$.   The positive parameters $R$ and $\e$ will be functions of $t$ with $\e = 1/R$.
In addition, $R(t)$ will be a continuous, increasing function of $t$, with
$R(t) \ge 2\max(|\eta|,|\eta|^{-1})$.  We shall specify $R$ later.

Next we shall specify the contours $\gamma_j$ and functions $g_j$, for $j=1,\dots,7$ in turn, and in each case we shall either show that $I_j \cdot R \in L^p(\R_+,X)$ or obtain pointwise estimates for $I_j(t)$.  We shall write $J_j(t) = \|I_j(t)\|$ and $f_t = f \chi_{(0,t)}$, so that $\widehat f_t$ is the entire function given by
\[
\widehat f_t(z) = \int_0^t e^{-zs} f(s) \, ds, \qquad z \in \C.
\]

\noindent  $I_1$:  We take $\gamma_1 = R\T \cap \C_+$, and $g_1 = \widehat f - \widehat f_t$.   Since
\begin{equation} \label{bdR}
\left|1 + \frac {\e^2}{(z-i\eta)^2} \right|  \le  2, \qquad |z|\ge R,
\end{equation}
$J_1(t)$ can be estimated in the same way as in (\ref{j1est}) and we find that $J_1 \cdot R \in L^p(\R_+)$.

\noindent  $I_2$:  We take $\gamma_2 = R\T \cap \C_-$, and $g_2 = \widehat f_t$.  Using \eqref{bdR}, one can estimate $J_2(t)$ as in \eqref{j2est} and obtain that $J_2 \cdot R \in L^p(\R_+)$.

\noindent  $I_3$:  We take $\gamma_3$ to consist of the two line-segments $(\pm iR + \R) \cap \widetilde \Omega_{M,\eta} \cap \C_-$,  and $g_3 = \widehat f$.  The inequality (\ref{bdR}) holds on $\gamma_3$, so one can estimate $J_3(t)$ as in \eqref{i3est}.

\noindent $I_4$:  We take $\gamma_4 = \left\{ z \in\partial \widetilde\Omega_{M,\eta} : |\Im z| < R, |\Im z - \eta| > \e \right\}$, and $g_4 = \widehat f$.  Here
\[
\left| 1 + \frac{\e^2}{(z-i\eta)^2} \right| \le 2,
\]
so $I_4(t)$ can be estimated in the same way as \eqref{i4est1}.

\noindent $I_5$:  We take $\gamma_5$ to consist of the two line-segments $(i\eta  \pm i\e + \R) \cap \widetilde \Omega_{M,\eta} \cap \C_-$,  and $g_5 = \widehat f$.  Here $z = -s + i(\eta \pm \e)$, where $0 < s <  1/M(R)$, and
\[
\left| 1 + \frac{\e^2}{(z-i\eta)^2} \right| = \frac{|1 \pm 2i\e/s|}{1+(\e/s)^2} \le \frac{2s}{\e} = 2sR.
\]
Since $M(s)\ge cs$ for large $s$, we obtain
\begin{align*}
J_5(t) &\le C \int_0^{1/M(R)} s^n R^n M(R)^\beta e^{-st} \,ds \\
&\le
C M(R)^\beta R^n \int_0^{\infty} s^n e^{-st} \, ds \\
&\le  \dfrac{C M(R)^\beta R^n}{t^{n+1}}.
 \end{align*}

\noindent $I_6$:  We take $\gamma_6 = \{z\in \C_+: |z-i\eta| = \e\}$, with $g_6 = \widehat f - \widehat f_t$.  Here $z = i\eta + \e e^{i\theta}$, where $-\pi/2 <\theta < \pi/2$, and
\begin{eqnarray*}
e^{zt} (\widehat f(z) - \widehat f_t(z)) &=& e^{zt} \int_t^\infty e^{-\e e^{i\theta} s} e^{-i\eta s} f(s) \, ds \\
&=&  - e^{i \eta t} F(t)   +  e^{zt} \int_t^\infty \e e^{i\theta} e^{- \e e^{i\theta} s} F(s) \, ds.
\end{eqnarray*}
Hence
\[
\big\| e^{zt} (\widehat f(z) - \widehat f_t(z)) \big\| \le \|F(t)\| + \e \int_t^\infty e^{- \e (s-t) \cos\theta} \|F(s)\| \, ds.
\]
Moreover, $|1 + \frac{z^2}{R^2}| \le 2$ and $|1 + \frac{\e^2}{(z-i\eta)^2}| = 2\cos\theta$ and $|z|\ge |\eta|/2$.  It follows that
\begin{eqnarray*}
J_6(t)
&\le&  C \e \|F(t)\| + C \int_{-\pi/2}^{\pi/2} \cos\theta \left(\e \int_0^\infty e^{- \e s \cos\theta } \|F(s+t)\| \, ds \right) \e \, d\theta \\
&\le& C \e \|F(t)\| + C \e^2 \int_0^\infty \frac{\|F(s+t)\|}{1 + \e^2 s^2} \, ds \\
&\le& C \e \|F(t)\| + C \e (P_{1/\e} * H_1)(t),
\end{eqnarray*}
where $H_1 = \|F\|$ on $\R_+$, $H_1 = 0$ on $\R_-$.  Since $H_1 \in L^p(\R)$, it follows that $J_6 / \e = J_6 \cdot R \in L^p(\R_+)$.

\noindent $I_7$:  We take $\gamma_7 =  \{z\in \C_-: |z-i\eta| = \e\}$, with $g_7 = \widehat f_t$.  The estimates are similar to those for $I_6$.  Now we have $z = i\eta + \e e^{i\theta}$, where $\pi/2 < \theta < 3\pi/2$, and
\begin{eqnarray*}
e^{zt} \widehat f_t(z) &=& e^{i \eta t} F(t) + e^{zt} \int_0^t \e e^{i\theta} e^{- \e e^{i\theta} s} F(s) \, ds, \\
\left| e^{zt}  \widehat f_t(z) \right| &\le& \|F(t)\| + \e \int_0^t e^{-  \e (t-s) |\cos\theta|} \|F(s)\| \, ds, \\
J_7(t) &\le&  C \e \|F(t)\| + C \e (P_{1/\e} *  H_1)(t).
\end{eqnarray*}
Again we obtain $J_7 \cdot R \in L^p(\R_+)$.

It follows from the estimates above that $g \in L^p(\R_+,R,X)$, provided that $R(t)$ is chosen in such a way that the following functions of $t$ all belong to  $L^p(a,\infty)$ for some $a\ge0$ and some $n\ge1$:
\begin{enumerate}
\item  $R(t)^{-(n+1)} M(R(t))^{-(n+1-\beta)}$,
\item  $\exp(-t / M(R(t)) R(t)^{2} M(R(t))^\beta$,
\item  $M(R(t))^\beta R(t)^{n+1}/t^{n+1}$.
\end{enumerate}
In particular, let $R(t) = \max\big(\wml(kt),2|\eta|,2|\eta|^{-1}\big)$.  Then the first two functions belong to $L^p(\R_+)$, for all sufficiently large $n\ge1$ and all sufficiently small $k>0$, as seen in the proof of Theorem \ref{x1}, specifically \eqref{i3est2} and \eqref{i4est}.

If $M(s) \ge \kappa s^\alpha$ for some $\alpha>1$ and $\kappa >0$, then $R(t)\le C t^\gamma$ for some $\gamma < 1$ and for large $t$.  Using also \eqref{rmint}, we may estimate the third function as follows:
$$
\frac {M(R(t))^\beta R(t)^{n+1}}{t^{n+1}} \le \frac{C}{t^{(n+1)(1-\gamma)-\beta}}
$$
 so the third function is in $L^p(2,\infty)$ if $n > (\beta+\gamma)(1-\gamma)^{-1}$.  This completes the proof of statement (\ref{lpsinga}).
\vanish{
 and $R(t) = M_1^{-1}(t^\alpha)$, where $0<\alpha< 1 - p^{-1}$ if $p < \infty$ and $\alpha=1$ if $p=\infty$.  Then the functions above become
\begin{enumerate}
\item $t^{-\alpha n}$  for some $n\ge1$,
\item  $\exp(- t^{1-\alpha}R(t)) t^{\alpha} R(t)$,
 \item $t^{-(1-\alpha)}$.
\end{enumerate}
The function $r \mapsto \exp(- t^{1-\alpha}r)rt^\alpha$ is decreasing for $r > t^{-(1-\alpha)}$.    Since $R$ is increasing, $R(t)>c$ for all sufficiently large $t$ so
 \[
\exp(- t^{1-\alpha}R(t)) t^{\alpha} R(t) \le c t^\alpha \exp(-ct^{1-\alpha}).
\]
This function is in $L^p$.  Thus $g \in L^(\R_+,W,X)$ with
\[
W(t) = M_1^{-1}(t^\alpha).
\]
Since $L^p(\R_+,W,X) \subset L^p(\R_+,X)$, it follows that $g \in C_0(\R_+,X)$ in the same way as in Theorem \ref{lpingham}.}

For the statement \eqref{lpsingb}, we put $R(t) = \max\big((1+t)^\gamma,2|\eta|,2|\eta|^{-1}\big)$ where
$\gamma \in (0,1)$.  In this case the three functions are easily estimated as follows, for $t$ sufficiently large:
\begin{enumerate}
\item  $R(t)^{-(n+1)} M(R(t))^{-(n+1-\beta)} \le C t^{-(2(n+1)-\beta)\gamma}$,
\item  $\exp(-t / M(R(t))) R(t)^{2} M(R(t))^\beta  \le C \exp(-t^{1-\gamma}) t^{(\beta+2)\gamma}$,
\item  $M(R(t))^\beta R(t)^{n+1}/t^{n+1} \le C t^{\gamma\beta - (n+1)(1-\gamma)}$.
\end{enumerate}
Each of these functions is in $L^p(2,\infty)$ if $n$ is large enough.  This completes the proof of statement (\ref{lpsingb}).
\end{proof}

\begin{remark}  In the formulation of Theorem \ref{lpsing}, the assumption \eqref{omegasing} appears to be formally less general than the corresponding assumption \eqref{omega} in Theorem \ref{x1}, as terms which are polynomial in $\Im z$ are not included. This corresponds to the case $\alpha=0$ in \eqref{omega}. However in both cases \eqref{lpsinga} and \eqref{lpsingb} of Theorem \ref{lpsing} we consider functions $M$ which grow at least linearly.  Then such terms can be absorbed by changing the value of $\beta$, so the more general case is covered by the theorem.
\end{remark}

When the assumptions of Theorem \ref{lpsing} hold with $M$ growing sublinearly, for example $M(s) = \max(2,s^\alpha)$ for some $\alpha \in (0,1)$, one might hope to achieve the conclusion of the theorem with a weight $w$ growing faster than $t$.  However the following example shows that the rate of decay given under the weaker assumptions of Theorem \ref{lpsing}\eqref{lpsingb} is close to optimal even when $M$ grows sublinearly, at least for $p=\infty$.  However, if  the shape of the domain $\Omega_M$ and the growth of $\widehat f$ in $\Omega_M$ near $i\eta$ are determined by different functions M, then one can obtain a faster decay of rate than in Theorem \ref{lpsing}; see \cite[Proposition 4.3]{BaDu}.

\begin{example}
Let $X=c_0(\N)$, $(\beta_n)_n \in X$ with $\Re\beta_n>0$, and $(T(t))_{t\ge0}$ be the $C_0$-semigroup of contractions on $X$ defined by
\[
T(t)x  =  (\exp(it-\beta_n t) \alpha_n)_n, \qquad x = (\alpha_n)_n \in X.
\]
Let $f(t) = T(t)x$ where $x = (\beta_n)_n \in X$.  Since the generator has spectrum $\{i-\beta_n : n\ge1\} \cup \{i\}$, $\widehat f$ extends analytically at every point of $i\R \setminus \{i\}$ and
\[
\big\| \widehat f(is) \big\| = \sup_{n\ge1} \left| \frac {\beta_n}{i-is-\beta_n} \right|.
\]
Moreover,
\[
\int_0^t e^{-is} f(s) \, ds = \left(1 - \exp (-\beta_n t) \right)_n.
\]
This is bounded in $X$.   On the other hand,
\[
g(t) = \widehat f(0) - \int_0^t f(s) \, ds =  \left(\frac{\beta_n}{\beta_n-i}  \exp(it-\beta_n t) \right)_n.
\]

Now consider the case when $\beta_n>0$.  We will consider the conditions of   Theorem \ref{lpsing} with $\eta=1$ and $p=\infty$.  For $\lambda = i - re^{i\theta}$ where $r>0$ and $-\pi/2 < \theta < \pi/2$, we have
\[
\big\| \widehat f(\lambda) \big\| = \sup_{n\ge1} \Big| \frac {\beta_n}{re^{i\theta}-\beta_n} \Big| = \sup_{n\ge1} |1 - \beta_n^{-1}re^{i\theta}|^{-1} \le |\csc \theta|.
\]
Take $M(s) = \max(2,s^{1/2})$.  When $\lambda \in \widetilde\Omega_{M,1} \cap \C_-$, with $0< |\Im\lambda - 1|\le 1/4$, let $s = |\Im\lambda - 1|^{-1}$.  Then $|\tan \theta| > M(s)/s = s^{-1/2}$.  Hence
$$
\big\| \widehat f(\lambda) \big\| \le  |\csc \theta| < (1+s)^{1/2} \le C M(|\Im\lambda-1|^{-1}).
$$
Since $\widehat f$ is bounded elsewhere in $\widetilde\Omega_{M,1} \cap \C_-$, the condition \eqref{omegasing} of Theorem \ref{lpsing} is satisfied for $\beta = 1$.    Theorem \ref{lpsing}\eqref{lpsingb} can be applied  to show that $\|g(t)\|= o(t^{-\gamma})$ as $t \to\infty$, for any $\gamma<1$.  Even though the conditions of the theorem are satisfied with $M$ growing sublinearly,  $\|g(t)\|$ does not decrease faster than $t^{-1}$.  In fact,
\[
\|g(t)\| = \sup_{n\ge1} \frac{\beta_n e^{-\beta_n t}} {|i - \beta_n|}  \le \frac{1}{et}
\]
and
\[
\|g(\beta_n^{-1})\| \ge \frac {\beta_n}{(1+ \beta_n^2)^{1/2}e}.
\]
So the optimal rate of decay is $\|g(t)\| = O(t^{-1})$, see also \cite[Proposition 4.3]{BaDu}.

By choosing $\beta_n$ to have arguments tending to $\pi/2$, one can construct variants of this example in which $\|\widehat f(is)\|$ increases arbitrarily rapidly as $s\to1$.
\end{example}

\section{Optimality of weights}\label{se5}

We begin this section by showing that Theorem \ref{inghamrates} cannot be essentially improved in the case of polynomial rates.  At the end of the section we state a result for logarithmic rates.

\begin{theorem}\label{example}
Given $\alpha>0$, $p\in [1,\infty)$, and a positive function $\gamma
\in C_0(\mathbb R_+)$, there exists a function $f\in L^p (\mathbb
R_+)$ such that
\begin{enumerate} [\rm(a)]
\item \label{51a} $ \widehat f$ admits an analytic extension to the region
$$
\Omega:=\{z\in\mathbb C: {\rm Re}\, z >  -1/(1+|\Im z|)^\alpha\},
$$
with
\begin{equation}  \label{fhatbd}
| \widehat f (z)| \le (1 +  |\Im z|)^{\alpha/2}, \qquad z\in \Omega,
\end{equation}
and
\item \label{51b}
$$
\int_0^\infty  \Big|\widehat f(0) - \int_{0}^{t} f(s) \, ds\Big|^p \, \Bigl(\frac{t}{\gamma(t)\log (t+2)}\Bigr)^{p/\alpha}
\,dt=\infty.
$$
\end{enumerate}
\end{theorem}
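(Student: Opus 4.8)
The plan is to build $f$ as a series $f=\sum_{k\ge 1}c_k f_k$, where $f_k$ is a building block attached to a scale $t_k$, and the sequence $t_1<t_2<\dots\to\infty$ is chosen to grow so rapidly that its growth encodes $\gamma$; the blocks $f_k$ will be (modulations of) the Laplace transforms of the complex measures produced in Proposition~\ref{prop}. First I would record what comes for free. Here $\Omega=\Omega_M$ with $M(s)=(1+s)^\alpha$, and \eqref{fhatbd} says $\|\widehat f(z)\|\le M(|\Im z|)^{1/2}$, so Theorem~\ref{x1} (with the parameters $\alpha=0$, $\beta=1/2$ there) applies to \emph{any} $f$ satisfying (a) and gives $g:=\widehat f(0)-\int_0^\cdot f\in L^p(\R_+,\wml(k\cdot))$ with the associated norm bounded by $C\|f\|_p$. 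Since $M$ is polynomial, $\wml(kt)\asymp\wml(t)\asymp (t/\log t)^{1/\alpha}$, hence $\int_0^\infty |g(t)|^p(t/\log(t+2))^{p/\alpha}\,dt<\infty$ automatically. Because $(t/(\gamma(t)\log(t+2)))^{p/\alpha}=(t/\log(t+2))^{p/\alpha}\,\gamma(t)^{-p/\alpha}$, statement (b) is the assertion that the sharp weight $\wml$ cannot be improved by the diverging factor $\gamma^{-1/\alpha}$; so the goal is to make $|g|$ comparable with the sharp rate $\wml^{-1}(t)\asymp(\log t/t)^{1/\alpha}$ on a set of times that reaches out to infinity faster than $\gamma$ tends to $0$.

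The substantive input is Proposition~\ref{prop}, which I would use to supply, for each scale $\tau$, a block $f_\tau$ with: $\widehat{f_\tau}$ analytic on $\Omega$ and concentrated in frequency near $\xi_\tau:=\tau^{1/\alpha}$, with $\|\widehat{f_\tau}(z)\|\lesssim \xi_\tau^{\alpha/2}$ for $|\Im z|$ near $\xi_\tau$ and rapid decay of $|\widehat{f_\tau}(z)|$ in $\bigl|\,|\Im z|-\xi_\tau\,\bigr|$ away from there; $\|f_\tau\|_p\le A_\tau$ for an explicit quantity $A_\tau$; and, crucially, $G_\tau(t):=\widehat{f_\tau}(0)-\int_0^t f_\tau$ satisfies $|G_\tau(t)|\gtrsim (t/\log t)^{-1/\alpha}$ on a set $S_\tau\subset[c\tau,\tau]$ with $|S_\tau|\gtrsim A_\tau^p$. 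The last point says the block is \emph{efficient}: Theorem~\ref{x1} applied to $f_\tau$ alone forces $|S_\tau|=\int_{S_\tau}|G_\tau|^p(t/\log t)^{p/\alpha}\,dt\lesssim\|f_\tau\|_p^p$, so up to constants the block's entire $L^p$-mass is used to build primitive of the sharp size. This efficiency is exactly where I expect the main difficulty to lie. A naive single wave packet at frequency $\xi_\tau$ is \emph{not} efficient: reaching the sharp size for $G_\tau$ this way forces an $L^p$-mass larger by a factor $(\log\tau)^{p/\alpha}$ than an efficient block would cost; removing this loss is precisely what forces the measures in Proposition~\ref{prop} to be genuinely complex-valued, with a finely tuned phase that makes $\|\widehat{f_\tau}\|$ far smaller than the crude bound "total variation over distance to $\partial\Omega$", and is why the logarithm in $\wml^{-1}$ appears, reflecting a logarithmic spread of the measure.

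Granting Proposition~\ref{prop}, I would assemble $f$ as follows. Fix $d_k>0$ with $\sum_k d_k<\infty$ (say $d_k=2^{-k}$). Choose $t_1<t_2<\dots\to\infty$ recursively so that: (i) $\sup_{t\ge c t_k}\gamma(t)\le d_k^{\alpha/p}$ (possible since $\gamma\to0$); (ii) $t_{k+1}\ge 2t_k/c$, so the time windows $[ct_k,t_k]$ and the frequency bands near $\xi_{t_k}$ are far apart and rapidly separated; (iii) the rapidly decaying tails of the $k$-th block beyond $t_k$ are negligible from scale $t_{k+1}$ on. Put $c_k:=d_k^{1/p}/A_{t_k}$ and $f:=\sum_k c_k f_{t_k}$. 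Then $\|f\|_p^p\asymp\sum_k c_k^p\|f_{t_k}\|_p^p\le\sum_k c_k^p A_{t_k}^p=\sum_k d_k<\infty$, using that the blocks are effectively disjoint in time. Property (a) holds: each $\widehat{f_{t_k}}$ is analytic on $\Omega$, the series converges locally uniformly there, and for $|\Im z|$ near some $\xi_{t_K}$ only the $K$-th term is non-negligible (frequency localization plus (ii)), so $\|\widehat f(z)\|\lesssim c_K\xi_{t_K}^{\alpha/2}+(\text{small})\le\xi_{t_K}^{\alpha/2}\asymp (1+|\Im z|)^{\alpha/2}$ after rescaling $f$ by a fixed constant; for $|\Im z|$ away from every $\xi_{t_K}$ and for large $\Re z$ the bound is even easier.

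Finally (b). Write $g=\sum_k c_k G_{t_k}$. On $S_{t_k}\subset[ct_k,t_k]$ the other blocks are negligible (concentrated elsewhere), so $|g(t)|\asymp c_k|G_{t_k}(t)|\gtrsim c_k(t/\log t)^{-1/\alpha}$ there, whence, since $\gamma(t)^{-p/\alpha}\ge d_k^{-1}$ on $S_{t_k}$ by (i) and $|S_{t_k}|\gtrsim A_{t_k}^p$,
\[
\int_{S_{t_k}}|g(t)|^p\Big(\tfrac{t}{\gamma(t)\log(t+2)}\Big)^{p/\alpha}\,dt\;\gtrsim\;c_k^p\int_{S_{t_k}}\gamma(t)^{-p/\alpha}\,dt\;\gtrsim\;c_k^p\,|S_{t_k}|\,d_k^{-1}\;\gtrsim\;c_k^p A_{t_k}^p\,d_k^{-1}=1 .
\]
Since the sets $S_{t_k}$ are pairwise disjoint, summing over $k$ yields $\int_0^\infty|g(t)|^p(t/(\gamma(t)\log(t+2)))^{p/\alpha}\,dt=\infty$, which is (b). Everything beyond Proposition~\ref{prop} — the $\gamma$-adapted choice of $(t_k)$ and $(c_k)$, and the three verifications above — is bookkeeping resting on the time- and frequency-disjointness of the blocks.
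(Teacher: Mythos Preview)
Your proposal is correct and follows essentially the same approach as the paper: invoke Proposition~\ref{prop} to produce efficient building blocks $f_{k_n}=\mathcal L\mu_n$ concentrated near time $k_n$ and frequency $(k_n/\log k_n)^{1/\alpha}$, normalize by $c_n\asymp 2^{-n}k_n^{-1/(2p)}$ (which is exactly your $d_n^{1/p}/A_{t_n}$ with $A_{t_n}\asymp k_n^{1/(2p)}$), choose $k_n$ to grow fast enough to beat $\gamma$, and verify (a) and (b) via the time/frequency separation of the blocks. One small imprecision: Proposition~\ref{prop} actually gives $|\widehat{f_\tau}(z)|\lesssim |\Im z|^\beta$ near the peak (for any $\beta>\alpha/2$), not $\xi_\tau^{\alpha/2}$; the paper chooses $\beta\le\alpha/2+\alpha/(2p)$ so that this becomes $\lesssim(1+|\Im z|)^{\alpha/2}A_\tau$, and then your factor $c_k=d_k^{1/p}/A_\tau$ absorbs the extra $A_\tau$---so your assembly for (a) still works, but the $\beta$-choice is a step you should make explicit.
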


The proof of Theorem \ref{example} is based on the following
proposition.   For a complex measure $\mu$ on $\C \setminus \Omega$ and $t\ge0$, we define
\begin{align}
\label{Lmu}
\XL\mu(t) &= \int_{\mathbb C \setminus
\Omega} e^{t\zeta}\,d\mu(\zeta), \\
 \label{Gmu}
\mathcal G\mu(t,z) &=\int_{\mathbb C \setminus \Omega} e^{t
\zeta}\,\frac{d\mu(\zeta)}{z-\zeta} \,,  \qquad z \in \Omega, \\
\label{Nmu}
\XN\mu(t) &= \int_{\mathbb C \setminus
\Omega}
e^{t\zeta}\,\frac{d\mu(\zeta)}{\zeta} \,.
\end{align}
These functions are related as follows:
\begin{equation} \label{GNmu}
\mathcal G\mu(0,z) = \widehat{\XL\mu}(z),  \quad \XN\mu(t) =  \int_0^t \XL\mu(s) \,ds - \widehat{\XL\mu}(0).
\end{equation}
While for the proof of Theorem \ref{example} we will need an estimate only for $\mathcal G \mu(0,z),$ the proposition below
provides a bound for $\mathcal G \mu(t,z)$ for all $t \ge 0.$ This more general bound will be needed for the proofs
of Theorems \ref{xyz} and \ref{t7}.
\begin{proposition}\label{prop}
Given $\alpha>0$ and $\beta > \alpha/2$, there exist arbitrarily large integers $k$, complex measures $\mu=\mu(k)$ with compact support in $\mathbb C\setminus \Omega$, and points $w=w(k)\in \mathbb C\setminus \Omega$, such that
\begin{equation} \label{wk}
|w| \asymp  \left(\frac{k}{\log k} \right)^{1/\alpha}, \quad k\to\infty, \qquad \supp\mu\subset
\{z:|z-w|<1\},
\end{equation}
 and, for some constants $C,c>0$ (depending on $\alpha$ and $\beta$) and some absolute constant $\rho>0$, we have
\begin{align}
&\hskip-10pt |\mathcal{L}\mu(t)|\le
C\chi_{\{t:|t-k|<k/2\}}e^{-\rho(t-k)^2/k}+ e^{-\rho t}, \label{X3}\\
&\hskip-10pt  |\mathcal{G}\mu(t,z)| \le  C\chi_{\{t:t\le 2k\}}
\big(|\Im z|^{\beta}\chi_{\{z:|z-w|<2\}}+ 1\big)+e^{-\rho t}, \label{XQ4}\\
&\hskip-10pt |\mathcal{N}\mu(t)| \ge c\Bigl(\frac{\log
k}{k}\Bigr)^{1/\alpha}\quad \text{if \;$(t-k)^2<k$},\label{X5}
\end{align}
and
\begin{equation}
\hskip-15pt |\mathcal{N}\mu(t)| \le C\Bigl(\frac{\log
k}{k}\Bigr)^{1/\alpha}e^{-\rho(t-k)^2/k}\chi_{\{t:|t-k|<k/2\}}+ e^{-\rho
t},\label{X6}
\end{equation}
for all $t\ge0$ and $z \in \Omega$.
\end{proposition}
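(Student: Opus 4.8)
The plan is to produce, for each large $k$, a single ``building block'' $\mu=\mu(k)$ concentrated near a well-chosen point $w=w(k)$, tailored so that $t\mapsto \mathcal{L}\mu(t)=\int e^{t\zeta}\,d\mu(\zeta)$ is essentially a bump of height $\asymp 1$, width $\asymp\sqrt k$, centred at $t=k$; the estimates \eqref{X3}--\eqref{X6} should then be read off from this bump and its tails. First I would fix the geometry: take $w$ with $\Re w$ equal to a small fixed negative constant $-\eta_0$ and $|\Im w|\asymp (k/\log k)^{1/\alpha}$ (implied constants depending on $\alpha,\beta$). This already gives $|w|\asymp (k/\log k)^{1/\alpha}$ as in \eqref{wk}, ensures $w\in\C\setminus\Omega$, and — since $|\Im w|$ is large — makes $1/\zeta$ almost constant, equal to $1/w$ up to a relative error $O(|w|^{-1})$, uniformly for $\zeta$ in a unit disc about $w$; this is what produces the factor $(\log k/k)^{1/\alpha}\asymp|w|^{-1}$ in \eqref{X5}--\eqref{X6}. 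One also arranges that $\Re\zeta\le-c<0$ on $\supp\mu$ for a fixed $c>0$, so that the crude bound $|\mathcal{L}\mu(t)|\le\|\mu\|\,e^{-ct}$ beats $e^{-\rho t}$ once $t$ is past $\sim k$, provided $\rho$ is chosen small relative to $c$ and $\|\mu\|$ is no larger than $e^{O(k)}$; this handles the $e^{-\rho t}$ terms of \eqref{X3}, \eqref{XQ4}, \eqref{X6} in the range $|t-k|\ge k/2$.

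For the bump I would take $\mu$ to be a high-order finite difference, or an absolutely continuous measure with density of the form $A\,e^{-i\omega s}\,q_N(s)\,\psi(s)$ along a short sub-segment of $\{w+is:|s|<1/2\}\cap(\C\setminus\Omega)$: here $\psi$ is a fixed smooth bump, $q_N$ an orthogonal polynomial of degree $N\asymp k$ forcing $\mu$ to have $N$ vanishing moments about $w$, and $\omega\asymp k$, $A>0$ are chosen so that $\mathcal{L}\mu(t)=e^{tw}\cdot(\text{oscillatory integral})$ is stationary at $t=k$ with value $\asymp 1$ there. A Laplace/stationary-phase evaluation of this one-variable integral yields, near $t=k$, a Gaussian $|\mathcal{L}\mu(t)|\lesssim e^{-\rho(t-k)^2/k}$ for suitably small $\rho$ (the second-order data of the phase is arranged so the Gaussian rate exceeds $\rho/k$), and combining with the crude bound above gives \eqref{X3}, and then \eqref{XQ4} for $t>0$ once the $z$-dependence is inserted. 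The estimates \eqref{X5}--\eqref{X6} follow from \eqref{X3} by the comparison $\mathcal{N}\mu(t)=w^{-1}\mathcal{L}\mu(t)+\int e^{t\zeta}(\zeta^{-1}-w^{-1})\,d\mu(\zeta)$, the error integral being dominated by $|w|^{-2}\|\mu\|e^{-ct}$, which on $(t-k)^2<k$ is $o(|w|^{-1})$; hence $|\mathcal{N}\mu(t)|\asymp|w|^{-1}|\mathcal{L}\mu(t)|$ there, and \eqref{X5} drops out of the matching lower bound for $|\mathcal{L}\mu(t)|$ near its peak.

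The bound \eqref{XQ4} at $t=0$ — i.e.\ for the Cauchy transform $\mathcal{G}\mu(0,z)=\widehat{\mathcal{L}\mu}(z)$ — is where the work lies and is the main obstacle, because the correct bump height forces $\|\mu\|$ to be exponentially large in $k$. For $|z-w|\ge 2$ one writes $\frac1{z-\zeta}=\sum_{m\ge0}(\zeta-w)^m(z-w)^{-m-1}$ and integrates against $d\mu$; the vanishing of the first $N\asymp k$ moments leaves $\mathcal{G}\mu(0,z)=\sum_{m\ge N}m_m(z-w)^{-m-1}$, and the geometric gain $(r/|z-w|)^N$ (with $r<1$ the support radius) is arranged, by taking $N$ large enough relative to $\log\|\mu\|$ and $r$ small enough, to swamp $\|\mu\|$, giving $|\mathcal{G}\mu(0,z)|\le C$. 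For $z\in\Omega$ with $|z-w|<2$ one uses that $\supp\mu\subset\C\setminus\Omega$ and $\Omega$ are separated by $\partial\Omega$, so $|z-\zeta|$ is still bounded below; the same moment expansion then gives a bound $\le C$, in particular $\le C|\Im z|^\beta$ since $|\Im z|\asymp(k/\log k)^{1/\alpha}\to\infty$, and it is in arranging $N$, $r$ and $A$ compatibly that $\beta>\alpha/2$ is used (it leaves the needed slack). The genuinely delicate point throughout is this simultaneous calibration: $\mathcal{L}\mu$ must have the prescribed height, width and centre, $\mu$ must retain $\asymp k$ vanishing moments with $\|\mu\|$ no larger than necessary, and $\eta_0,\omega,A,N,r$ and $\rho$ must be chosen so that the Gaussian regime near $t=k$ and the exponential regime away from it overlap correctly around $|t-k|=k/2$.
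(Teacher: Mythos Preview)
Your construction with an absolutely continuous measure along a vertical segment through $w$ (with $\Re w=-\eta_0$ fixed) cannot deliver the Gaussian bump \eqref{X3}. Writing $\zeta=w+is$, one has $\mathcal{L}\mu(t)=A\,e^{tw}\,\hat h(t-\omega)$ with $h$ compactly supported and smooth. Normalising $|\mathcal{L}\mu(k)|\asymp 1$ forces $A\asymp e^{\eta_0 k}$, and then for $t=k-s$ with $0<s<k/2$ the required bound reads $e^{\eta_0 s}\,|\hat h(-s)|\le C e^{-\rho s^2/k}$, hence $|\hat h(-s)|\le Ce^{-\eta_0 s}$ for $s$ up to order $k$. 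But a nonzero compactly supported $h$ cannot have $\hat h$ decaying exponentially on $\R$ (else $h$ extends analytically to a strip and must vanish). The stationary-phase picture therefore breaks down on the left side of the bump; the ``high-order finite difference'' alternative you mention is closer to what is needed but you do not develop it.

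The paper's construction is quite different and worth comparing. One takes $\Re w=-1$ and places $k$ point masses at $w+q^s/A$, $q=e^{2\pi i/k}$, $A=2k\log k$, with weights $\tau q^s(1+q^s/(Aw))$, $\tau=A^{k-1}/\sqrt k$. The root-of-unity cancellation $\sum_s q^{s(n+1)}=k\cdot\chi_{k\mid n+1}$ collapses the Taylor expansion of $\sum_s q^s e^{tq^s/A}$ to a single dominant term, so that $\mathcal{L}\mu(t)$ is essentially $e^{iHt}\sqrt k\,t^{k-1}e^{-t}/(k-1)!$; Stirling then gives the Gaussian profile, and the same mechanism yields \eqref{X5}--\eqref{X6}. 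For $\mathcal{G}\mu$ the identity $\sum_s q^{js}/(u-q^s)=ku^{j-1}/(u^k-1)$ produces a factor $(A^k(z-w)^k-1)^{-1}$ that absorbs the (super-exponential) total variation $\|\mu\|\asymp A^{k-1}\sqrt k$, not merely $e^{O(k)}$ as you assume. Finally, near $w$ the bound is \emph{not} uniformly $\le C$: one gets $|\mathcal G\mu(t,z)|\lesssim \sqrt k\,(1-H^{-\alpha})^{-k}\asymp k^{1/2+\gamma/\alpha}$ with $\gamma<\beta-\alpha/2$, and it is precisely the comparison $k^{1/2+\gamma/\alpha}\le |\Im z|^{\beta}$ (via $k\asymp H^\alpha\log H$) that uses $\beta>\alpha/2$. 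Your claim that the near-$w$ bound is $\le C$ and that $\beta>\alpha/2$ only provides ``slack'' misses this.
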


\begin{proof}  In the course of this proof, we shall use the symbol $\rho$ to denote an absolute constant, which is strictly positive and is chosen to make   certain inequalities hold.  In each case the inequalities are (trivially) also true for smaller positive values, so that in the end we can take the minimum of the various values of $\rho$.  We shall consider integers $k\ge2$, and at several stages we shall assume that $k$ is sufficiently large that certain inequalities hold.  These inequalities involve $\rho$, but each inequality will hold for all integers $k$ which are sufficiently large (depending on $\rho$).  Thus for the final value of $\rho$, we establish all the estimates for all sufficiently large $k$.

Fix $\gamma\in(0,\beta-\alpha/2)$,  and define
\begin{eqnarray}
A&:=&2k\log k, \nonumber \\
\tau&:=&A^{k-1}/\sqrt k, \nonumber \\
q&:=&e^{2\pi i/k}, \nonumber\\
w&:=&iH-1, \label{wH}
\end{eqnarray}
where $H$ satisfies $k=\gamma H^\alpha\log H$.   Note that
\begin{gather}\label{Has}
|w| \sim H = \left( \frac{k}{\gamma \log H} \right)^{1/\alpha} \sim \left(\alpha \frac{k}{\gamma \log k} \right)^{1/\alpha}, \qquad k \to \infty,  \\
  \label{z-w}
|z-w| \ge 1 - H^{-\alpha}, \qquad z \in \Omega.
\end{gather}
Indeed if $|z-w|<1$, then $\Im z> H-1$ so $\Re z > -H^{-\alpha}$.
We take $k$ sufficiently large that $|w|\ge3$ and $|z-w| \ge 1/2$ for all $z \in \Omega$.

Consider the complex measure
$$
\mu:=\tau\sum_{1 \le s \le k}q^s \Big(1+ \frac{q^s}{Aw}\Big) \delta_{w+q^s/A},
$$
where $\delta_\zeta$ is the unit mass at $\zeta$.  Then \eqref{wk} holds.  To establish the remaining properties, we use the following elementary facts.

\begin{lemma}\label{sub1} For $k\ge j \ge 1$ and $z \in \C$ with $z^k\ne1$,
$$
\sum_{1 \le s \le k} \frac{q^{js}}{z -q^s}= \frac{kz^{j-1}}{ z^k-1} \,.
$$
\end{lemma}

\begin{proof}
For some polynomial $P$ with $\operatorname{deg} P < k$
we have
$$
\sum_{1 \le s \le k}\frac{q^{js}}{z-q^s}= \frac {P(z)}{z^k-1}.
$$
Since $P(qz)=q^{j-1}P(z)$, we obtain that $P(z)=c z^{j-1}$.  Expanding for large real $z$ we get
$$
\frac{P(z)}{z^k-1} =  \sum_{m\ge 0}\sum_{1 \le s \le k} \frac{q^{(m+j)s}}{z^{m+1}}=
\frac{k}{z^{k-j+1}} + O\left(z^{-(2k-j+1)}\right),\qquad z\to\infty,
$$
and, hence, $P(z)=kz^{j-1}$.
\end{proof}

\begin{lemma}\label{sub2} For $n\ge 1$ and $|z|\le 1$,
$$
\Bigl|e^z-\sum_{j=0}^n\frac{z^j}{j!}\Bigr|\le
2\frac{|z|^{n+1}}{(n+1)!}\,.
$$
\end{lemma}

\begin{proof} This follows easily from the Taylor expansion of the exponential
function. \end{proof}

\begin{lemma}\label{sub3} There exist absolute constants $C,c, \rho>0$ such that, for all $k\ge 3$,
\begin{align}
e^{k-t}(t/k)^k \max(\sqrt{t/k},1)&\le C e^{-\rho(t-k)^2/\max(t,k)},\,\, &&t\ge 0,  \label{sub31} \\
e^{k-t}(t/k)^k&\ge c,\qquad && |t-k|^2\le 2k, \label{sub32}\\
e^{-t} t^k \max(\sqrt{t},\sqrt{k}) / k! &\le C e^{-\rho(t-k)^2/\max(t,k)},\,\, &&t\ge 0. \label{sub33}
\end{align}
\end{lemma}

\begin{proof}   The inequalities \eqref{sub31} and \eqref{sub33} are equivalent (up to a change of $C$), by Stirling's formula.  We shall establish \eqref{sub31} and \eqref{sub32}.

Let $t = (1+s)k$, where $s>-1$.  Consider first the case when $|s| < 1$, so $t < 2k$.  From the Taylor expansion of $\log(1+s)$, one sees that the following inequalities hold for some $\gamma,\rho>0$ (more precisely, $0<\rho \le 1 - \log 2$),
\begin{align*}
-s + \log (1+s) &\le  - \rho s^2, &&|s|<1,  \\
-s + \log (1+s) &\ge - \frac{\gamma}{k}, && s^2 \le \frac{2}{k}, \quad k\ge3.
\end{align*}
Multiplying by $k$ and exponentiating, one obtains
\begin{align*}
e^{k-t}(t/k)^k &\le  e^{-\rho(t-k)^2/k}, &&0\le t < 2k,\\
e^{k-t}(t/k)^k&\ge e^{-\gamma}, && |t-k|^2\le 2k.
\end{align*}
The inequalities \eqref{sub31} and \eqref{sub32} follow, with $C=\sqrt2$ and $c = e^{-\gamma}$.

Now consider the case when $s\ge1$, so $t\ge 2k$.  It is easily verified that, for sufficiently small $\rho>0$  (more precisely, $0 < \rho \le (6-7 \log2)/6$), and all $s\ge1$ and $k\ge3$,
$$
 \log(1+s)  \le  \frac{6}{7} (1-\rho) s \le \left( \frac{2k}{2k+1} \right) \left( 1 - \frac{\rho s}{1+s} \right) s.
$$
Hence
$$
-ks + \left(k + \tfrac12\right) \log(1+s) \le  - \frac{\rho s^2 k}{1+s}.
$$
Exponentiating gives \eqref{sub31} with $C=1$.
\end{proof}

\vanish{
\begin{lemma}\label{sub4} There exist absolute constants $c, \rho>0$ such that
$$
e^{-t}\frac{t^j}{j!}\sqrt{t}\le ce^{-\rho \min(q,\sqrt{qt})}
$$
whenever $j\ge 1$, $t\ge0$, $q\ge0$ and $|j-t|^2\ge qt$.
\end{lemma}

\begin{proof}  For $t\ge k$, this follows immediately from Lemma \ref{sub3}. For $0 \le t \le 1$, the inequality is easily established, so we restrict attention to the case when $t\ge1$.  Take $u=\sqrt{q/t}\ge0$ and $s = j/t \ge 1/t$, so that $q=u^2t$ and $j=st$. By Stirling's formula, we need only to verify that
\begin{equation}
s\log s - (s-1) +\frac 1{2t} \log s \ge -\frac{\log c}{t} + \rho \min(u,u^2)
\label{bo5}
\end{equation}
whenever $|s-1|\ge u$.  By the Taylor expansion of the logarithm function about $s=1$, we have, for some $\rho'>0$,
\begin{align}
s\log s-(s-1) &\ge \rho'\min(s-1,(s-1)^2), \qquad s \ge 1, \label{bo1}\\
s\log s-(s-1) &\ge \rho' (1-s)^2, \qquad s\le 1 .\label{bo2}
\end{align}

For sufficiently small $\delta>0$ and for $s<\delta$, we have
$s\log s-(s-1)>\frac23$. Moreover, $\frac{\log t}{t} \le e^{-1}$ for all $t\ge1$.  Therefore,
\begin{equation}\label{b03}
s\log s-(s-1) +\frac 1{2t} \log s\ge \frac 23 - \frac{1}{2e} \ge \frac{ (1-s)^2}{3}\qquad
\text{when $1/t\le s<\delta$}.
\end{equation}
By \eqref{bo2},  we have
\begin{equation} \label{bo4}
s\log s-(s-1)+\frac 1{2t} \log s\ge  \frac{\log \delta}{2t}+\rho'
(1-s)^2 \quad \text{when $\max(\delta, 1/t)\le s \le 1$}.
\end{equation}
Taking $\rho = \min(\rho' 1/3)$ and $c = \delta^{-1/2}$, the estimates \eqref{bo1}, \eqref{b03} and \eqref{bo4} establish \eqref{bo5}, whenever $|s-1|\ge u$.
\end{proof}
}

\begin{remark} \label{rem3}
It is easy to see that
$$
\Big| \frac{(k-t)^2}{\max(k,t)} - \frac{(k-1-t)^2}{\max(k-1,t)} \Big| \le 2, \qquad k\ge2, t \ge 0.
$$
Hence the estimates \eqref{sub31} and \eqref{sub33} remain true if $k$ is replaced by $k-1$ or $k-2$ on one side of the inequality (with the value of $\rho$ unchanged, but $C$ may change).
\end{remark}

Now we continue the proof of Proposition \ref{prop}.
Since $|w|\ge3$ and $|z-w| > 1/2$ for all $z \in \Omega$, it follows easily that
\begin{equation}  \label{zw}
\frac{|z|}{|w|\,|z-w|}  \le 4, \qquad z \in \Omega.
\end{equation}

In order to establish \eqref{X3}, we use that
\begin{equation}
\label{dop2}
\XL\mu(t) =\int_{\mathbb C \setminus
\Omega} e^{t\zeta}\,d\mu(\zeta)  =
\tau\sum_{1 \le s \le k}q^s \left(1+ \frac{q^s}{Aw}\right)e^{t(w+q^s/A)}.
\end{equation}
Then
\begin{align*}
\XL\mu(t)
&= \tau e^{tw} \sum_{n\ge 0} \sum_{1 \le s \le k} \left( q^s+ \frac{q^{2s}}{Aw} \right) \left( \frac{q^st}{A} \right)^n\frac 1{n!} \nonumber \\
&=
A^{k-1} \sqrt{k} e^{tw} \sum_{m\ge 1}\Bigl[\frac{t^{km-1}}{A^{km-1}}\cdot \frac
1{(km-1)!}+ \frac{t^{km-2}}{A^{km-2}}\cdot \frac 1{(km-2)!}\cdot
\frac 1{Aw} \Bigr] \nonumber \\
&= \frac{\sqrt{k} t^{k-1} e^{tw}}{(k-1)!}\sum_{m\ge 1}
\Bigl(\frac{t^k}{A^k}\Bigr)^{m-1}\frac{(k-1)!}{(km-1)!} \Bigl(1+
\frac {km-1}{tw}\Bigr). \nonumber
\end{align*}
When  $0\le t\le A$ and $k\ge3$, using  Lemma \ref{sub3} \eqref{sub33} and Remark \ref{rem3} we obtain
\begin{eqnarray*}
|\XL\mu(t)|
&\le& C  \frac{\sqrt{k} t^{k-1}e^{-t}}{(k-1)!}\sum_{m\ge 1}
\frac{(k-1)!m}{(km-1)!} \left(1 + \frac{k}{t}\right)\\
&\le& C \left( \frac{e^{-t}(k-1)^{1/2}t^{k-1}}{(k-1)!} + \frac{e^{-t}(k-2)^{1/2}t^{k-2}}{(k-2)!} \right) \sum_{m\ge1} \frac{ (k-1)! m}{(km-1)!} \\
& \le& C e^{-\rho (k-t)^2/\max(t,k)}.
\end{eqnarray*}
In particular,
$$
|\XL\mu(t)| \le \begin{cases} C e^{-\rho t/2} \quad &\text{when $0 < t \le k/2$}, \\ C e^{-\rho (k-t)^2/2k} \quad &\text{when $k/2 \le t \le 3k/2$}, \\ Ce^{-\rho t/9} &\text{when $3k/2 \le t \le A$}. \end{cases}
$$
When $t\ge A$, we obtain from \eqref{dop2} that
$$
|\XL\mu(t)| \le 2\tau e^{-t}k e^{t/A}=2 \sqrt{k} A^{k-1} e^{-t(1-1/A)} \le 2t^k e^{-t(1-1/A)} < e^{-\rho t},
$$
when $\rho \in (0,1)$ and $k$ is sufficiently large.  Thus we establish \eqref{X3}  for some $\rho$ and all sufficiently large $k$.
\smallskip

Next, to prove \eqref{XQ4}, we consider
\begin{equation*}
\mathcal G\mu(t,z)=\int_{\mathbb C \setminus \Omega} e^{t
\zeta}\,\frac{d\mu(\zeta)}{z-\zeta}=\tau e^{tw}\sum_{1\le s\le k}
\frac{Aq^s+q^{2s}/w}{A(z-w)-q^s}e^{tq^s/A}.
\end{equation*}
Take $\rho \in (0,1/4)$, and take $k$ sufficiently large that $\rho + 1/A \le 1/4$.  If $t\ge A$, then
\begin{align}\label{24}
|\mathcal G\mu(t,z)|&\le C\frac{A^{k-1}}{\sqrt k}e^{-t+(t/A)}k \\&= C 2^{k-1} k^{k-1/2} (\log k)^{k-1}e^{-t+(t/A)} \nonumber \\
&\le C 2^{k-1} k^{k-1/2} (\log k)^{k-1} e^{-3A/4} e^{-\rho t} \nonumber \\
&= C 2^{k-1} (\log k)^{k-1} k^{-(k+1)/2} e^{-\rho t} \nonumber\\
&\le  e^{-\rho t}, \nonumber
\end{align}
for sufficiently large $k$.

Now, we consider the case when $0 \le t\le A$. By Lemma~\ref{sub2} (applied with $z=tq^s/A$) we have
\begin{multline*}
\Bigl|\sum_{1\le s\le k}\frac{Aq^s+q^{2s}/w}{A(z-w)-q^s}e^{tq^s/A}\Bigr|
\\ \le \Bigl|\sum_{1\le s\le k}\sum_{0\le j<k-1}
\frac{Aq^s+q^{2s}/w}{A(z-w)-q^s}\cdot\frac{t^jq^{sj}}{A^jj!}\Bigr|
+\frac{Ckt^{k-1}}{A^{k-1}(k-1)!}.
\end{multline*}
By Lemma~\ref{sub1},
\begin{multline*}
\lefteqn{\sum_{1\le s\le k}\frac{1}{A(z-w)-q^s}\sum_{0\le j<k-1}
\Bigl(\frac{t^jA^{1-j}q^{s(j+1)}}{j!}+
\frac{t^jA^{-j}q^{s(j+2)}}{j!w} \Bigr)} \\
=
\frac{k}{A^k(z-w)^k-1}\!\sum_{0\le j<k-1}
\Bigl(\frac{t^jA^{1-j}}{j!}A^j(z-w)^j+
\frac{t^jA^{-j}}{j!w}A^{j+1}(z-w)^{j+1}  \Bigr)  \\
=
\frac{k}{A^k(z-w)^k-1}\!\sum_{0\le j<k-1} \frac{t^jAz}{j!w}(z-w)^j.
\end{multline*}
From this we obtain
\begin{eqnarray}
\lefteqn{|\mathcal G\mu(t,z)|} \nonumber \\
  &\le& \frac{cA^{k-1}e^{-t}}{\sqrt{k}}  \Bigl(\frac{kt^{k-1}}{A^{k-1}(k-1)!}
+\frac{k}{A^k|z-w|^k}\sum_{0\le j<k-1}
\frac{t^jA|z|}{j!|w|}|z-w|^j\Bigr) \nonumber \\
&=& \frac{ce^{-t}t^{k-1} \sqrt{k}}{(k-1)!}
+\frac{ce^{-t}\sqrt{k}}{|z-w|^k}\sum_{0\le j<k-1}
\frac{t^j|z|}{j!|w|}|z-w|^j \nonumber \\
&=:& X + Y.  \nonumber
\end{eqnarray}
By  Lemma~\ref{sub3} \eqref{sub33} and Remark \ref{rem3},
\begin{equation} \label{Xest}
X \le Ce^{-\rho(t-k)^2/\max(t,k)} \le \begin{cases} C e^{-\rho t/4} \quad &\text{if $2k \le t \le A$}, \\
 C  \quad &\text{if $0 \le t \le 2k$}.\end{cases}
\end{equation}

Now we consider the term $Y$.   Assume first that $2k \le t \le A$.  By Lemma~\ref{sub3} \eqref{sub33} we have
\begin{align*}
Y &\le C \frac{\sqrt{k} |z|}{|w|} \sum_{0 \le j < k-1} |z-w|^{-(k-j)} e^{-\rho(t-j)^2/t} \\
&\le C  e^{-\rho t/4} \frac{\sqrt{k} |z|}{|w|} \sum_{0 \le j < k-1} |z-w|^{-(k-j)}.
\end{align*}
By \eqref{z-w}, we can assume that $k$ is sufficiently large that  $|z-w|> e^{-\rho/4}$ for all $z \in \Omega$.  Using \eqref{zw}, we have
$$
Y \le C e^{-\rho t/4} k^{3/2} \frac{|z|}{|w|\,|z-w|} e^{\rho k/4}  \le C e^{-\rho t/9}.
$$
 From this and \eqref{Xest}, we have
 \begin{equation}  \label{27a}
|\mathcal G\mu(t,z)|  \le Ce^{-\rho t/9} \quad \text{when $2k\le t \le A$}.
\end{equation}

Next assume that $|z-w|\ge 2$ and $ k/4 \le t \le 2k$.  By Lemma~\ref{sub3} \eqref{sub33},  we have $e^{-t} \sqrt{k} t^j/j! \le C$ for $0 \le j < k-1$.  Using also \eqref{zw}, we obtain
\begin{equation}\label{Yest1}
Y \le \frac{C |z|}{|w|} \sum_{0\le j < k-1} |z-w|^{-(k-j)} \le \frac{C |z|}{|w| |z-w|} \le 4C.
\end{equation}
Next, assume that $|z-w|\ge 2$ and $t< k/4$. Let $m$ be the integer part of $k/2$.  Since $e^{-t} t^j/j! \le 1$,
\begin{eqnarray*}
\frac{e^{-t}\sqrt{k}}{|z-w|^k}\sum_{0\le j \le m} \frac{t^j|z|}{j!|w|}|z-w|^j
&\le&
\frac{\sqrt{k}|z|}{|w|} \sum_{0\le j \le m} |z-w|^{-(k-j)} \\
&\le&
\frac{ 2\sqrt{k} |z|}{|w| \, |z-w|^{k/2}} \\
&\le&
\frac{ 4\sqrt{k}}{2^{k/2}} \frac{|z|}{|w| \, |z-w|} \\
&\le& c.
\end{eqnarray*}
For $j>m$, by Lemma \ref{sub3} \eqref{sub33},
$$
\frac{e^{-t}t^j}{j!}  \le \frac{Ce^{-\rho(j-t)^2/j}}{\sqrt{t}} \le \frac {C}{\sqrt{t}} e^{-\rho k /8}.
$$
Hence
$$
\frac{e^{-t}\sqrt{k}}{|z-w|^k}\sum_{m< j < k-1} \frac{t^j|z|}{j!|w|}|z-w|^j
\le
\frac{C \sqrt{k} e^{-\rho k /8}}{\sqrt{t}} \frac{|z|}{|w|\, |z-w|}  \le C.
$$
Thus we have
$$
Y \le C \quad \text{when $|z-w|\ge 2$ and $t< k/4$}.
$$
In combination with \eqref{Xest} and \eqref{Yest1}, this gives
\begin{equation}
|\mathcal G\mu(t,z)| \le C
 \quad \text{when $|z-w|\ge2$ and $0< t \le 2k$}.\label{2u2}
\end{equation}

Next assume that $|z-w|\le 2$ and $t\le 2k$.  Using \eqref{z-w} we have
\begin{align*}
Y &=  \frac{e^{-t}\sqrt{k}}{|z-w|^k}\sum_{0 \le j < k-1} \frac{t^j|z|}{j!|w|}|z-w|^j \\
&\le \frac{\sqrt{k} |z|}{|w|} \max\left(|z-w|^{-1}, |z-w|^{-k}\right)  \le C\sqrt{k} (1-H^{-\alpha})^{-k}.
\end{align*}
Take $\gamma' \in (\gamma, \beta - \alpha/2)$ and $\gamma'' \in (\gamma',\beta-\alpha/2)$.   By \eqref{Has},
$$
H^\alpha \ge \frac{\alpha k}{\gamma' \log k}
$$
for all sufficiently large $k$.  Moreover
$$
e^{-\gamma'' s} + \gamma' s \le 1
$$
for all sufficiently small $s>0$.  Putting $s = (\log k)/(\alpha k)$ for sufficiently large $k$ we have
$$
1- \frac{\gamma' \log k}{\alpha k} \ge k^{-\gamma''/(\alpha k)}.
$$
Thus, for sufficiently large $k$,
$$
Y  \le c\sqrt{k} \left(1- \frac{\gamma' \log k}{\alpha k} \right)^{-k} \le C k^{1/2 + \gamma''/\alpha}.
$$
Since $1/2 + \gamma''/\alpha < \beta/\alpha$, it follows from \eqref{Has} that
$$
|\Im z|^\beta \ge (\Im w -2)^\beta = (H-2)^\beta  > k^{1/2+\gamma''/\alpha}
$$
for sufficiently large $k$.  Thus
$$
Y \le c |\Im z|^\beta
$$
for sufficiently large $k$.  Using also \eqref{Xest},
\begin{equation}
|\mathcal G\mu(t,z)|  \le C |\Im z|^\beta \quad \text{when $|z-w| \le 2$ and $t \le 2k$}. \label{27}
\end{equation}
Together, \eqref{24}, \eqref{27a}, \eqref{2u2}, and \eqref{27}
prove \eqref{XQ4}.
\smallskip

Finally, for \eqref{X5} we consider
\begin{align} \label{dop3}
\XN\mu(t) &= \int_{\mathbb C \setminus \Omega}
e^{t\zeta}\,\frac{d\mu(\zeta)}{\zeta} \\&= \frac{\tau
e^{-t+iHt}}{w}\sum_{1 \le s \le k}q^se^{q^st/A}  \nonumber\\
 &= \frac{\tau e^{-t+iHt}}{w} \sum_{n\ge0}\sum_{1 \le s \le k} q^s \left( \frac{q^s t}{A} \right)^n\frac 1{n!} \nonumber \\
&= \frac{e^{iHt}}{w} A^{k-1} \sqrt{k} e^{-t} \sum_{m\ge1}\frac{t^{km-1}}{A^{km-1}}\, \frac 1{(km-1)!} \nonumber\\
&= \frac{e^{iHt}}{w} \frac{\sqrt{k} e^{-t} t^{k-1}}{(k-1)!} \sum_{m\ge1} \Bigl(\frac{t^k}{A^k}\Bigr)^{m-1}\frac{(k-1)!}{(km-1)!}. \nonumber
\end{align}
When $0 \le t \le A$, we obtain from Stirling's formula that
\begin{equation*}
c  e^{k-t}\frac{t^{k-1}}{k^{k-1}H} \le \frac{e^{-t}\sqrt{k}t^{k-1}}{|w|(k-1)!} \le |\XN\mu(t)| \le C \frac{e^{-t} \sqrt{k} t^{k-1}}{|w| (k-1)!} \le C e^{k-t}\frac{t^{k-1}}{k^{k-1}H} \,.
\label{X8}
\end{equation*}
By \eqref{Has}, Lemma~\ref{sub3} and Remark \ref{rem3},
\begin{equation}
|\XN\mu(t)| \Bigl(\frac {k}{\log k}\Bigr)^{1/\alpha}\ge c >0,\qquad
|t-k|^2\le k,
\label{X81}
\end{equation}
and
\begin{equation*}
|\XN\mu(t)| \le C \Bigl(\frac {\log k}{k}\Bigr)^{1/\alpha}e^{-\rho (t-k)^2/\max(t,k)},\qquad 0 \le t \le A.
\end{equation*}
Hence
\begin{equation} \label{X82}
|\XN\mu(t)| \le \begin{cases}  Ce^{-\rho t/2},\quad 0 \le t \le k/2,  \\[0.3 em]
C \Bigl(\dfrac {\log k}{k}\Bigr)^{1/\alpha} e^{-\rho (k-t)^2/2t}, \quad k/2 \le t \le 3k/2, \\[0.7 em]
Ce^{-\rho t/9}, \quad 3k/2 \le t \le A.
\end{cases}
\end{equation}
Finally, by \eqref{dop3} when $t \ge A$ we have
\begin{equation}
|\XN\mu(t)|\le \frac{C\tau e^{-t}}{H} ke^{t/A} \le \sqrt{k} t^{k-1} e^{-t(1-1/A)} \le  e^{-\rho t}
\label{X83}
\end{equation}
when $\rho\in(0,1)$ and $k$ is sufficiently large.  Now, \eqref{X81}--\eqref{X83} together yield \eqref{X5} and \eqref{X6}.
\end{proof}

\begin{proof}[Proof of Theorem \rm\ref{example}]
Without loss of generality, we can assume that $\gamma$ is non-increasing and $\gamma(t) \ge t^{-1}$ for all $t\ge1$.  Choose $\beta$ with $\alpha/2 < \beta \le \alpha/2 + \alpha/(2p)$.  Let $\{k_n\}\subset \mathbb N$ be a sequence of integers as in Proposition \ref{prop} such that
$$
k_1 \ge 3, \quad  k_n \ge 3k_{n-1} \; (n\ge2), \quad 2^n \gamma\big(k_n-\sqrt{k_n}\big)^{1/\alpha} \to 0.
$$
For each $n\ge1$, let $\mu_n$ be the corresponding complex measure with compact support in $\C \setminus \Omega$, let $f_n:= \mathcal{L}\mu_n$, and let $w_n$ be the corresponding point in $\C \setminus \Omega$ so that
\begin{equation*}
|w_n| \asymp  \left(\frac{k_n}{\log k_n} \right)^{1/\alpha}, \quad n\to\infty,
\qquad \supp \mu_n \subset \{z: |z-w_n|<1\}.
\end{equation*}
Then the following hold:
\begin{enumerate}[\rm(i)]
\item by \eqref{GNmu},
$$
\widehat f_n(z) = \mathcal G\mu_n(0,z),  \quad \XN\mu_n(t) = \int_0^t f_n(s) \,ds - \widehat f_n(0),
$$
\item by \eqref{X3},
\begin{multline} \label{T11}
\|f_n\|_{L^p(\mathbb R_+)} \\
\le C \Bigl( \int_{k_n/2}^{3k_n/2} e^{-p\rho (t-k_n)^2/k_n}\, dt \Bigr)^{1/p} + \Bigl(\int_0^\infty e^{-p\rho t} \, dt \Bigr)^{1/p} \\
\le C k_n^{1/(2p)} \Bigl(\int_\R  e^{-\rho p u^2} \,du \Bigr)^{1/p} + (p \rho)^{-1/p}
\le C k_n^{1/(2p)},
\end{multline}
\item by \eqref{XQ4}, for $z\in\Omega$,
\begin{equation} \label{T12}
|\widehat f_n(z)| \le C\big(|\Im z|^\beta \chi_{\{z:|z-w_n|\le 2\}} + 1\big)
\le C(1+ |\Im z|)^{\alpha/2} k_n^{1/(2p)},
\end{equation}
where we have used that  $|\Im z| \le |w_n|+2 \le Ck_n^{1/\alpha}$ if $|z-w_n|\le2$, by \eqref{wk},
\item by \eqref{X5}, when $(t-k_n)^2 < k_n$,
\begin{equation*}
\Big|\widehat f_n(0) - \int_0^t f_n(s) \, ds\Big|\ge c_1\Bigl(\frac{\log
k_n}{k_n}\Bigr)^{1/\alpha},
\end{equation*}
\item by \eqref{X6},
\begin{multline*}
\Big|\widehat f_n(0)- \int_0^t f_n(s) \, ds \Big|\\ \le C\Bigl(\frac{\log k_n}{k_n}\Bigr)^{1/\alpha}e^{-\rho(t-k_n)^2/k_n}\chi_{\{t:|t-k_n|<k_n/2\}}+ c_2 e^{-\rho t}.
\end{multline*}
\end{enumerate}
Let
$$
f=\sum_{n=1}^\infty 2^{-n}k_{n}^{-1/(2p)}f_{n}.
$$
By \eqref{T11}, the series converges in $L^p(\mathbb R_+)$.  Property (\ref{51a}) of Theorem \ref{example} follows from \eqref{T12}.

For $k_n - \sqrt{k_n} \le t \le k_{n} + \sqrt{k_n}$, we have $|t-k_m| > k_m/2$ for $m \ne n$ and hence
\begin{align*}
\left|\widehat f(0) - \int_0^t f(s) \, ds  \right|
 &\ge 2^{-n}k_{n}^{-1/(2p)} \left| \XN\mu_n(t) \right| - \sum_{m \ne n} 2^{-m}k_{m}^{-1/(2p)} \left| \XN\mu_m(t) \right| \\
&\ge  c_1  2^{-n}k_{n}^{-1/(2p)} \left( \frac{ \log k_{n}}{k_{n}} \right)^{1/\alpha} - c_2 e^{-\rho t}.
\end{align*}
Hence
\begin{align*}
\lefteqn{\hskip-10pt\int_{k_n-\sqrt{k_n}}^{k_{n}+\sqrt{k_n}} \left( \left|\widehat f(0) - \int_0^t f(s) \, ds\right| + c_2 e^{-\rho t} \right)^p \Bigl(\frac{t}{\gamma(t)\log (t+2)}\Bigr)^{p/\alpha}
\,dt}  \\
&\ge 2 \sqrt{k_n} c_1^p 2^{-pn} k_{n}^{-1/2} \left( \frac{ \log k_{n}}{k_{n}} \right)^{p/\alpha} \left( \frac{k_n }{2 \gamma(k_n-\sqrt{k_n}) \log(2k_{n} + 2)} \right)^{p/\alpha}\\
&\to\infty.
\end{align*}
Since $\gamma(t) \ge t^{-1}$, we have that $t \mapsto e^{-\rho t} \Bigl(\frac{t}{\gamma(t)\log (t+2)}\Bigr)^{1/\alpha}$ is in $L^p(\R_+)$, and property (\ref{51b}) follows.
\end{proof}

\begin{remark}  Given $\gamma > \alpha/2 - \alpha/(2p)$, one can find $f \in L^p(\R_+)$ satisfying the properties of Theorem \ref{example} with \eqref{fhatbd} replaced by
$$|\widehat f(z)| \le (1 + |\Im z|)^\gamma, \qquad z \in\Omega.
$$
This is achieved by choosing $\beta = \gamma + \alpha/(2p)$ in the proof above.
\end{remark}

In the rest of this section we briefly discuss optimality of Theorem \ref{lpingham} for logarithmic rates.  When $M(s) = (\log (2+s))^\alpha$ and the assumptions of Theorem \ref{x1} are satisfied, \eqref{lprate} shows that
$$
\int_0^\infty
\Big|\widehat f(0)
-\int_{0}^{t} f(s) \, ds \Big|^p \, e^{\gamma t^{1/(\alpha+1)}}\,dt < \infty
$$
for some $\gamma>0$.  The following analogue of Theorem~\ref{example} shows that this may not hold for all $\gamma$, and it follows that Theorem~\ref{lpingham} is optimal in this case, up to possible changes of $k$.

\begin{theorem}\label{exampleB}
Given $\alpha>0$ and $p\ge 1$, there exist $\gamma>0$ and a function $f\in L^p (\mathbb R_+)$ such that
\begin{enumerate} [\rm(a)]
\item $ \widehat f$ admits a bounded analytic extension to the region
$$
\Omega:= \big\{z\in\mathbb C: \Re z >  -1/(\log (2+|\Im z|))^\alpha \big\},
$$
and
\item
$$
\int_0^\infty
\Big|\widehat f(0)
-\int_{0}^{t} f(s) \, ds \Big|^p \, e^{\gamma t^{1/(\alpha+1)}}\,dt=\infty.
$$
\end{enumerate}
\end{theorem}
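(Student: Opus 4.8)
The plan is to follow the strategy of Theorem~\ref{example}, replacing Proposition~\ref{prop} by a variant in which the ``height'' of the building blocks is chosen so that $\widehat f$ becomes \emph{bounded} on $\Omega$, not merely polynomially bounded. By \eqref{GNmu} it suffices to build $f=\sum_n c_n\XL\mu_n$ for suitable discrete measures $\mu_n$ and scalars $c_n>0$: then $\widehat f=\sum_n c_n\,\mathcal G\mu_n(0,\cdot)$ and $\widehat f(0)-\int_0^t f(s)\,ds=-\sum_n c_n\XN\mu_n(t)$, so that (a) reduces to a uniform bound for $\mathcal G\mu_n(0,\cdot)$ on $\Omega$, and (b) to a good lower estimate for a single $\XN\mu_n$ near a suitable time, together with a crude control of the cross terms as in the proof of Theorem~\ref{example}.

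For the building blocks I would take, along a rapidly increasing sequence of integers $k_n$, the measure $\mu_n$ of the proof of Proposition~\ref{prop} with $k=k_n$ but with the point $w=w_n=iH_n-1$, where $H_n$ is now defined by $\log(2+H_n)=(\al k_n)^{1/(\al+1)}=:L_n$ instead of by the relation \eqref{Has}. An inspection of that proof shows that every estimate there except \eqref{XQ4} uses only that $|w|$ is large; in particular \eqref{X3} holds for $\mu_n$, the expression for $\XN\mu$ obtained in that proof yields (via Stirling and Lemma~\ref{sub3}) $|\XN\mu_n(t)|\ge c/H_n$ for $(t-k_n)^2\le k_n$ and $|\XN\mu_n(t)|\le (C/H_n)e^{-\rho(t-k_n)^2/\max(t,k_n)}$ for $0\le t\le A_n=2k_n\log k_n$, while $|\XN\mu_n(t)|\le e^{-\rho t}$ for $t\ge A_n$, and $\|\XL\mu_n\|_p\le Ck_n^{1/(2p)}$. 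In place of \eqref{XQ4} I would use the closed form coming from Lemma~\ref{sub1} with $j=1$ and $j=2$, namely
\[
\mathcal G\mu_n(0,z)=\frac{\sqrt{k_n}\,z}{w_n\bigl((z-w_n)^{k_n}-A_n^{-k_n}\bigr)},
\]
which gives $|\mathcal G\mu_n(0,z)|\le 2\sqrt{k_n}\,|z|/\bigl(|w_n|\,|z-w_n|^{k_n}\bigr)$ once $|z-w_n|\ge 2/A_n$. The disc $\{|z-w_n|\le 1/A_n\}\supset\supp\mu_n$ lies in $\C\setminus\Omega$, and for $z\in\Omega$ with $|z-w_n|\le 2$ one has $\Im z\ge H_n-2$, hence $\Re z>-cL_n^{-\al}$, so $|z-w_n|\ge 1-cL_n^{-\al}$; for $|z-w_n|>2$ the bound is trivially $\le C\sqrt{k_n}2^{-k_n}$. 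Therefore
\[
B_n:=\sup_{z\in\Omega}|\mathcal G\mu_n(0,z)|\le C\sqrt{k_n}\,\bigl(1-cL_n^{-\al}\bigr)^{-k_n}\le C\sqrt{k_n}\,e^{Ck_n/L_n^{\al}}.
\]

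The point of choosing $L_n=(\al k_n)^{1/(\al+1)}$ is that this minimises $k_n/L_n^{\al}+\log H_n$, and it gives $B_nH_n\le C\sqrt{k_n}\,e^{C_\al k_n^{1/(\al+1)}}$ for an explicit $C_\al$ depending only on $\al$. Now set $c_n=2^{-n}/B_n$ and $f=\sum_n c_n\XL\mu_n$. Since $|c_n\,\mathcal G\mu_n(0,z)|\le c_nB_n=2^{-n}$ on $\Omega$ and each $\mathcal G\mu_n(0,\cdot)$ is analytic off $\supp\mu_n\subset\C\setminus\Omega$, the series converges uniformly on $\Omega$ to a bounded analytic extension of $\widehat f$, which is (a); and $\|f\|_p\le\sum_n c_n\|\XL\mu_n\|_p\le\sum_n 2^{-n}k_n^{1/(2p)-1/2}\le\sum_n 2^{-n}<\infty$ since $1/(2p)-1/2\le 0$ for $p\ge1$, so $f\in L^p(\R_+)$. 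For (b), using that for $(t-k_n)^2\le k_n$ and $k_{n+1}\ge 3k_n$ the terms with $m\ne n$ contribute at most $Ce^{-\rho t}+\sum_m c_m(C/H_m)e^{-\rho(t-k_m)^2/\max}$, which is $\ll c_n/H_n$ once $k_n$ is large, one gets $\bigl|\sum_m c_m\XN\mu_m(t)\bigr|\ge c\,c_n/H_n$ there. Hence, taking $\gamma>2pC_\al$ and letting $k_n$ grow fast enough that $e^{(\gamma/2-pC_\al)k_n^{1/(\al+1)}}\ge 2^{pn}k_n^{(p-1)/2}$, the interval $\bigl(k_n-\tfrac12\sqrt{k_n},\,k_n+\tfrac12\sqrt{k_n}\bigr)$ contributes to $\int_0^\infty|\widehat f(0)-\int_0^t f|^p e^{\gamma t^{1/(\al+1)}}\,dt$ at least $c\sqrt{k_n}\,(c_n/H_n)^p e^{\gamma k_n^{1/(\al+1)}/2}\ge c\,2^{-pn}k_n^{-(p-1)/2}e^{(\gamma/2-pC_\al)k_n^{1/(\al+1)}}\to\infty$; as these intervals are disjoint, the integral diverges.

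The genuinely new point, and the only real difficulty, is the balancing of two competing exponential factors: the Cauchy-transform growth $(1-cL_n^{-\al})^{-k_n}$ that the wide region $\Omega$ forces into $B_n$ (which wants $L_n$ large), against the size $1/H_n$ of the bump of $\XN\mu_n$ (which wants $H_n$, hence $L_n$, small). Optimising $k_n/L_n^{\al}+L_n$ forces $L_n\asymp k_n^{1/(\al+1)}$, and this is exactly what produces the critical exponent $1/(\al+1)$ and ties it to the weight $\wml$ for $M(s)=(\log(2+s))^\al$ (for which $M_{\log}(s)\asymp(\log s)^{\al+1}$ and $\wml(t)\asymp e^{t^{1/(\al+1)}}$, as in the discussion preceding the statement). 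The remainder is bookkeeping: checking that the estimates of Proposition~\ref{prop} other than \eqref{XQ4} are insensitive to the value of $H$ (they use only $|w|$ large), extracting $|\XN\mu_n(t)|\gtrsim 1/H_n$ on $(t-k_n)^2\le k_n$ directly from the expression for $\XN\mu$ in that proof, and controlling the cross terms verbatim as in the proof of Theorem~\ref{example}.
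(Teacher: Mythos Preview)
Your approach is correct and follows essentially the same strategy the paper sketches via Proposition~\ref{propA}: choose $H$ so that $\log H\asymp k^{1/(\alpha+1)}$, reuse the measure $\mu$ from Proposition~\ref{prop}, and superpose along a fast sequence $k_n$. The one substantive difference is the placement of $w$. You keep $w=iH-1$, which forces $|z-w|\ge 1-cL^{-\alpha}$ on $\Omega$, so $\mathcal G\mu_n(0,\cdot)$ is not bounded but grows like $\sqrt{k_n}\,e^{Ck_n^{1/(\alpha+1)}}$; you then absorb this with $c_n=2^{-n}/B_n$. The paper instead shifts $w$ slightly further to the left, taking $w=iH-1-2(\log H)^{-\alpha}$. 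With this choice, for $z\in\Omega$ near height $H$ one has $\Re z>-(\log H)^{-\alpha}$ and hence $|z-w|\ge 1+(\log H)^{-\alpha}$, so $|z-w|^{-k}\le e^{-k(\log H)^{-\alpha}}=e^{-k^{1/(\alpha+1)}}$ and $\mathcal G\mu(t,\cdot)$ is \emph{uniformly} bounded on $\Omega$ (this is exactly property~(ii) of Proposition~\ref{propA}). That buys a cleaner endgame: no $B_n$ appears, the scaling $c_n$ can be chosen as in Theorem~\ref{example}, and the exponent $C_\alpha$ coming from $B_nH_n$ is replaced simply by $\log H_n\sim k_n^{1/(\alpha+1)}$ via~(iii). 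Your route requires checking the extra inequality $B_n\ge c\sqrt{k_n}$ (take $z=iH_n$ in your closed formula) to justify the $L^p$-bound $\sum_n c_n\|\XL\mu_n\|_p<\infty$, and the cross-term control, while correct, leans on the refined splitting \eqref{X82}--\eqref{X83} rather than only the coarse bound you state; but these are minor points. Both arguments hinge on the same balance $L\asymp k^{1/(\alpha+1)}$ that you correctly identify as the source of the exponent $1/(\alpha+1)$.
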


The proof is based on a modification of Proposition~\ref{prop}:

\begin{proposition}\label{propA}
There exist arbitrarily large integers $k$, complex measures $\mu=\mu(k)$ with compact support in $\mathbb C\setminus \Omega$, and points $w=w(k)\in \mathbb C\setminus \Omega$, such that
\begin{equation*} \label{wkA}
\frac{\log|w|}{k^{1/(\alpha+1)}} =1+o(1), \quad k\to\infty, \qquad \supp\mu\subset
\{z:|z-w|<1\},
\end{equation*}
and, for some numbers $C,c>0$ depending on $\alpha$, for some absolute constant $\rho>0$, and for all $t\ge0$ and $z \in \Omega$, we have
\begin{enumerate}[\rm(i)]
\item $\displaystyle  |\mathcal{L}\mu(t)|\le C e^{-\rho t^{1/(\alpha+1)}}$,
\vskip5pt
\item $\displaystyle |\mathcal{G}\mu(t,z)|\le
C\chi_{\{t:t\le 2k\}}+e^{-\rho t}$,
{\vskip5pt}
\item $\displaystyle |\mathcal{N}\mu(t)| \ge c e^{-4 k^{1/(\alpha+1)}}$
if $(t-k)^2<k$,  and
\vskip5pt
\item $\displaystyle |\mathcal{N}\mu(t)| \le C e^{-\rho t}$ if $|t-k|>k/2$.
\end{enumerate}
\end{proposition}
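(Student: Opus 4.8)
I would reuse the whole machinery of the proof of Proposition~\ref{prop} — the same shape of measure, the same sublemmas (Lemmas~\ref{sub1}--\ref{sub3} and Remark~\ref{rem3}) — but choose the parameters to fit the logarithmic profile of $\Omega$. For a large integer $k$ I would keep $A=2k\log k$, $q=e^{2\pi i/k}$, but take
\[
\tau=\frac{A^{k-1}}{k},\qquad w=-(1+\e_k)+iH,\qquad H=\exp\bigl(k^{1/(\alpha+1)}\bigr),\qquad \e_k=2k^{-\alpha/(\alpha+1)},
\]
and $\mu=\tau\sum_{1\le s\le k}q^s\bigl(1+q^s/(Aw)\bigr)\delta_{w+q^s/A}$. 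There are two changes relative to Proposition~\ref{prop}: $\tau$ carries an extra factor $k^{-1/2}$, because the target bound (ii) offers no factor $|\Im z|^{\beta}$ to absorb a residual power of $k$; and $w$ is pushed left of the line $\Re z=-1$ by $\e_k$, because (i) now demands $|\mathcal{L}\mu(k)|\lesssim e^{-\rho k^{1/(\alpha+1)}}$ whereas with $\Re w=-1$ the leading term of $\mathcal{L}\mu$ is $\asymp 1$ near $t=k$ — shifting $w$ multiplies that leading term by $e^{-\e_k t}$, which at $t\asymp k$ equals $e^{-2k^{1/(\alpha+1)}}$.

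Since $|q^s/A|=1/A<1$, and $\Re(w+q^s/A)<-1$ while $-1/(\log(2+|\Im z|))^{\alpha}>-1$ once $|\Im z|\ge1$ (points of $\supp\mu$ having huge imaginary part), we get $\supp\mu\subset\{|z-w|<1\}\cap(\C\setminus\Omega)$; also $\log|w|=\log H+o(1)=k^{1/(\alpha+1)}(1+o(1))$. The sizes $\log H\asymp k^{1/(\alpha+1)}$ and $\e_k\asymp k^{-\alpha/(\alpha+1)}$ are pinned down by the requirement that they simultaneously arrange: $\e_k k+\log H\le(3+o(1))k^{1/(\alpha+1)}<4k^{1/(\alpha+1)}$, which is what (iii) needs because the leading term gives $|\mathcal{N}\mu(t)|\gtrsim e^{-\e_k t}/|w|$ near $t=k$; $\e_k k\ge\rho k^{1/(\alpha+1)}$ for some absolute $\rho$, which is what (i) needs; and $\e_k(\log(2+H))^{\alpha}>1$, so that $\Re z-\Re w>1$ for every $z\in\Omega$, which keeps $|z-w|\ge1$ and hence $|z-w|^{-k}\le1$, exactly as \eqref{z-w} was used in Proposition~\ref{prop}.

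With these choices the estimates (i)--(iv) follow the pattern of Proposition~\ref{prop}, using the explicit exponential sums for $\mathcal{L}\mu,\mathcal{G}\mu,\mathcal{N}\mu$ (the last simplified by $d\mu(\zeta)/\zeta=\tau q^s/w$) together with Lemmas~\ref{sub2}--\ref{sub3} and Remark~\ref{rem3}. For (i): for $t\le A$ the leading term is $\asymp e^{-\e_k t}\bigl[e^{-t}t^{k-1}/(k-1)!\bigr]$; Lemma~\ref{sub3} bounds the bracket by $Ce^{-\rho_0(t-k)^2/\max(t,k)}$, after which $\e_k t^{\alpha/(\alpha+1)}=2(t/k)^{\alpha/(\alpha+1)}\ge2^{1/(\alpha+1)}$ for $t\ge k/2$ gives $\e_k t\ge\rho t^{1/(\alpha+1)}$, while for $t\le k/2$ the Gaussian factor already yields $e^{-\rho_0 k/4}\le e^{-\rho t^{1/(\alpha+1)}}$; for $t\ge A$ the crude estimate $|\mathcal{L}\mu(t)|\le t^{k-1}e^{-(1-1/A)t}\le e^{-\rho t}$ works as before. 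For (iii): the series for $\mathcal{N}\mu$ has positive terms, so $|\mathcal{N}\mu(t)|\ge|e^{tw}|\,t^{k-1}/(|w|(k-1)!)\ge(c/\sqrt k)\,e^{-\e_k t}/|w|$ by Lemma~\ref{sub3}\eqref{sub32}, which for $|t-k|<\sqrt k$ is $\ge c'e^{-(3+o(1))k^{1/(\alpha+1)}}\ge c'e^{-4k^{1/(\alpha+1)}}$. Statement (iv) and the $e^{-\rho t}$ part of (ii) come from the regime-by-regime analysis of Proposition~\ref{prop} ($t\le k/2$; $3k/2\le t\le A$; $t\ge A$), improved by the extra $e^{-\e_k t}$. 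The bound $|\mathcal{G}\mu(t,z)|\le C$ for $t\le2k$ is got by splitting $|z-w|\le2$ and $|z-w|>2$ and writing $|\mathcal{G}\mu(t,z)|\le X+Y$ as in Proposition~\ref{prop}; with the new $\tau$ the factor $\sqrt k$ in $Y$ becomes $1$, and the one awkward case, $|z-w|\le2$ with $t|z-w|>k-1$ (forcing $t>(k-1)/2$), is handled by truncating $\sum_{j<k-1}(t|z-w|)^j/j!$ at its last term and using $|z-w|^{-k}\le1$, which gives $Y\le Ck\,e^{-\e_k t}\bigl[e^{-t}t^{k-2}/(k-2)!\bigr]\le Ck^{1/2}e^{-\e_k t}\le Ck^{1/2}e^{-k^{1/(\alpha+1)}(1-1/k)}\to0$.

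The main obstacle is exactly this last point: obtaining a bound on $|\mathcal{G}\mu(t,z)|$ for $t\le2k$ that is uniform in $k$. After the shift of $w$, the natural estimates still throw off a power of $k$, and removing it is what forces the renormalisation $\tau=A^{k-1}/k$ together with the observation that, in the one troublesome range, $e^{-\e_k t}$ with $t$ of order $k$ dominates any fixed power of $k$. The second delicate balance is keeping the constant in (iii) strictly below $4$, which is what pins down $\log|w|\sim k^{1/(\alpha+1)}$: one needs $\e_k k+\log|w|<4k^{1/(\alpha+1)}$ while $\e_k k$ itself must be of order at least $k^{1/(\alpha+1)}$, and the optimal balance sits at $\log|w|\sim k^{1/(\alpha+1)}$, $\e_k k\sim 2k^{1/(\alpha+1)}$.
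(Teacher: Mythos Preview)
Your approach is essentially the paper's: the choice $w=iH-1-\e_k$ with $H=\exp(k^{1/(\alpha+1)})$ and $\e_k=2k^{-\alpha/(\alpha+1)}=2(\log H)^{-\alpha}$ is exactly the modification the paper specifies (its proof consists of a single sentence naming this $w$), and your analysis correctly identifies that the shift makes $|z-w|\ge 1$ for $z\in\Omega$, so the troublesome factor $|z-w|^{-k}$ in the $\mathcal G\mu$ estimate stays bounded. Your extra renormalisation $\tau=A^{k-1}/k$ is a harmless variant that makes the bound (ii) cleaner, though one can also keep $\tau=A^{k-1}/\sqrt{k}$ and absorb the leftover $\sqrt{k}$ using $(1+\e_k/2)^{-k}\le e^{-ck^{1/(\alpha+1)}}$; one small slip is the claim that $\Re z-\Re w>1$ for \emph{every} $z\in\Omega$, which is false for $z$ with small imaginary part, but the conclusion $|z-w|\ge1$ still holds globally since such $z$ are far from $w$ in the imaginary direction.
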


The proof is similar to that of Proposition~\ref{prop}. Instead of \eqref{wH} we set $w=iH-1-2(\log H)^{-\alpha}$ where $H=\exp \left(k^{1/(\alpha+1)}\right)$.

\section{$L^p$-rates for semigroup orbits} \label{sect6}

Let $(T(t))_{t \ge 0}$ be a $C_0$-semigroup on a Banach space $X$, with generator $A$.  In this section, we apply our function-theoretic results to the study of the $L^p$-rates of decay for differentiable orbits of $(T(t))_{t \ge 0}$ (in other words, classical solutions of the abstract Cauchy problem \eqref{Cauchypr}).

We start with the following simple observations showing certain limitations of such studies. Let $x \in X$ be such that
\begin{equation}\label{orbitlp} T(\cdot)x \in L^p(\mathbb R_+, X),
\end{equation}
for some $p \in [1,\infty)$.  Since
$$
\|T(t)x\| = \left(\int_{t-1}^t \|T(t)x\|^p \, ds\right)^{1/p} \le K_1 \left(\int_{t-1}^t \|T(s)x\|^p \, ds\right)^{1/p}, \,\, t\ge1,
$$
where $K_1 = \sup_{0\le s\le1} \|T(s)\|$, it is immediate that $T(\cdot)x \in C_0(\R_+,X)$.  It then follows that $T(\cdot) R(\omega,A) x \in L^p(\R_+,X) \cap C_0(\R_+,X)$, for any $\omega  \in \rho(A)$.

Now assume that  $(T(t))_{t \ge 0}$ is bounded and  $R(\lambda, A)$ extends analytically to $i\mathbb R$ and
\begin{equation} \label{resbd}
\|R(is,A)\|\le M(|\Im s|), \qquad s \in \mathbb R,
\end{equation}
for a continuous increasing function $M$.  Then $R(\lambda, A)$ extends analytically to $\Omega_{2M}:=\{\lambda \in \mathbb C: \Re \lambda > - (2M(|{\Im} \lambda|))^{-1}\}$ and $\|R(\lambda, A)\| \le 2M(|{\Im}  \lambda |), \lambda \in \Omega_{2M}$.  By Theorem \ref{semigrouprates}, one has
\begin{equation}\label{supbound1}
\sup_{t \ge 0} \wml(ct) \|T(t)A^{-1}\|=:N <\infty,
 \end{equation}
for some $c >0$. Moreover, Theorem \ref{x1} yields the following $L^p$-analogue of  \eqref{supbound1}:
\begin{equation}\label{hope}
\int_{0}^{\infty}w_{M,\log}(kt)^p
\|T(t)A^{-1}x\|^p\,dt <\infty,
\end{equation}
for some $k>0$, if \eqref{orbitlp} holds.
However, \eqref{hope} is a trivial consequence of \eqref{supbound1} and the semigroup property, for any $k \in (0,c)$, in view of
\begin{equation*}
\int_{0}^{\infty} w_{M,\log}(kt)^p
\|T(t)A^{-1}x\|^p\,dt \le N^p   \int_{0}^{\infty} \big\|T\big((1-k/c)t\big)x \big\|^p \, dt.
\end{equation*}
Similarly, for bounded $C_0$-semigroups on Hilbert spaces, the optimal polynomial $L^{\infty}$-rates as in Theorem \ref{borto} give rise to the corresponding $L^p$-rates.

Thus, $L^{\infty}$-rates for smooth orbits of bounded operator semigroups yield  $L^p$-rates for such orbits in a straightforward manner.

However, if we are interested in the asymptotic behaviour of the families $(T_1 T(t) T_2)_{t \ge 0}$ arising in the study of decay of local energy as explained in the Introduction, then the semigroup property is violated and the problem of describing $L^p$-rates for $(T_1 T(t) T_2)_{t \ge 0}$ becomes distinct from its $L^{\infty}$-analogue studied in \cite{BaDu}, \cite{BoTo10}, \cite{Bu98}, \cite{Ch09}, \cite{ChSchVaWu13}, etc.  In the $L^p$-context, we can formulate the following corollary of Theorems \ref{lpingham} and \ref{x1}.   Since we are considering individual vectors $x$ we could suppress the operator $T_2$, but we retain it because it is standard practice in the study of decay of local energy to consider rates which are uniform in some respects.

Let $\omega_0 = \omega_0(T)$ be the exponential growth bound of a $C_0$-semigroup $(T(t))_{t \ge 0}$.  The following result is of interest when $\omega_0\ge0$, so that $R(\lambda,A)$ is defined for $\Re\lambda>\omega_0$.  In particular, it complements \cite[Th\'eor\`eme 3]{Bu98} which has applications to decay of local energy in exterior domains of odd dimension.

\begin{theorem}\label{energydecayabs}
Let $(T(t))_{t \ge 0}$ be a $C_0$-semigroup on a Banach space $X$, with generator $A$,  let $T_1$ and $T_2$ be bounded operators on $X$, and let $\omega >\max(\omega_0,0)$ be fixed.
If $x \in X$ is such that
\begin{enumerate}[\rm(i)]
\item $T_1T(\cdot)T_2x \in L^p (\mathbb R_+, X)$ for some $p \ge 1$, and
\item $T_1R(\cdot, A)T_2x$ extends to an analytic function $G$ on an open set $\Omega$ containing $\overline{\C}_+$,
\end{enumerate}
 then
 \begin{equation}\label{cutofflp}
 T_1T(\cdot)R(\omega,A)T_2 x \in L^p (\mathbb R_+, X) \cap C_0(\mathbb R_+, X).
 \end{equation}
If $M:\mathbb R_+\to [2,\infty)$ is continuous and increasing, $\Omega = \Omega_M$ as in \eqref{omm}, and for some $C,\alpha,\beta\ge 0$ we have
\begin{equation}\label{q91}
\|G(\lambda)\| \le C(1+|\Im \lambda|)^\alpha M(|{\Im}\lambda|)^\beta, \qquad \lambda \in \Omega_M,
\end{equation}
then
 \begin{equation}\label{cutofflprates}
 T_1T(\cdot)R(\omega,A) T_2 x \in L^p (\mathbb R_+, w, X),
 \end{equation}
 where $w(t) =\wml(kt)$
 for some $k>0$.
\end{theorem}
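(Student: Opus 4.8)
The plan is to reduce the whole statement to Theorems~\ref{lpingham} and~\ref{x1} applied to the single function $f(t):=T_1T(t)T_2x$, and then to recover $T_1T(\cdot)R(\omega,A)T_2x$ from a primitive of $f$ by an elementary exponential-averaging identity.

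First I would take $f(t)=T_1T(t)T_2x$, which lies in $L^p(\R_+,X)$ by hypothesis~(i). Since $\omega>\max(\omega_0,0)$, for $\Re\lambda>\omega$ one has $R(\lambda,A)y=\int_0^\infty e^{-\lambda s}T(s)y\,ds$, so $\widehat f(\lambda)=T_1R(\lambda,A)T_2x$ there; as $\widehat f$ is analytic on $\C_+$ and agrees with $G$ on $\{\Re\lambda>\omega\}$, hypothesis~(ii) and analytic continuation show that $G$ provides the analytic extension of $\widehat f$ to $\Omega$, and in particular $\widehat f$ is analytic near $0$ with $\widehat f(0)=G(0)$. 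Then Theorem~\ref{lpingham} gives that
\[
g(t):=\widehat f(0)-\int_0^t f(s)\,ds=G(0)-\int_0^t f(s)\,ds\in L^p(\R_+,X),
\]
with $g\in C_0(\R_+,X)$ when $p<\infty$; when $p=\infty$ the inclusion $g\in C_0(\R_+,X)$ follows instead from Theorem~\ref{ingham}, using that $G$ is analytic on all of $i\R$. If moreover $\Omega=\Omega_M$ and \eqref{q91} holds, then \eqref{q91} is precisely hypothesis \eqref{omega} of Theorem~\ref{x1}, which yields $g\in L^p(\R_+,w,X)\cap C_0(\R_+,X)$ with $w(t)=\wml(kt)$ and $\|g\|_{L^p(\R_+,w,X)}\le C\|f\|_p$.

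Next I would prove the identity linking $g$ and $h(t):=T_1T(t)R(\omega,A)T_2x$. Using $R(\omega,A)T_2x=\int_0^\infty e^{-\omega u}T(u)T_2x\,du$ together with the semigroup law and boundedness of $T_1$ and $T(t)$ one gets $h(t)=\int_0^\infty e^{-\omega u}f(t+u)\,du$. Since $g(t+u)=g(t)-\int_0^u f(t+r)\,dr$, an application of Fubini's theorem (legitimate because $\int_0^\infty e^{-\omega u}\|f(t+u)\|\,du<\infty$ by H\"older's inequality) produces the clean relation
\[
h(t)=g(t)-\omega\int_0^\infty e^{-\omega u}g(t+u)\,du,\qquad t\ge0.
\]

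It then remains to read off the conclusions. Because $g\in C_0(\R_+,X)$, dominated convergence gives $\int_0^\infty e^{-\omega u}g(t+u)\,du\to0$ as $t\to\infty$, so $h\in C_0(\R_+,X)$; and Minkowski's integral inequality yields $\|h\|_p\le\|g\|_p+\omega\int_0^\infty e^{-\omega u}\|g(\cdot+u)\|_p\,du\le2\|g\|_p$, which is \eqref{cutofflp}. For \eqref{cutofflprates}, monotonicity of $w=\wml(k\cdot)$ gives $w(t)\le w(t+u)$ for $u\ge0$, hence $w(t)\|h(t)\|\le w(t)\|g(t)\|+\omega\int_0^\infty e^{-\omega u}w(t+u)\|g(t+u)\|\,du$; applying Minkowski's inequality to the scalar function $t\mapsto w(t)\|g(t)\|\in L^p(\R_+)$ then gives $\|h\|_{L^p(\R_+,w,X)}\le 2\|g\|_{L^p(\R_+,w,X)}<\infty$. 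The only genuinely non-routine step is spotting the averaging identity $h=g-\omega\int_0^\infty e^{-\omega u}g(\cdot+u)\,du$ (equivalently, recognising $T(t)R(\omega,A)$ as an exponential average of the tail of the orbit $T(\cdot)T_2x$ seen through $T_1$); after that everything is Young/Minkowski together with the monotonicity of $w$, and the only technical care needed is the endpoint $p=\infty$ for the $C_0$-conclusion.
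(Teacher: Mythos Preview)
Your proof is correct, but it differs from the paper's in how the tauberian theorem is applied. The paper feeds
\[
F(t)=T_1T(t)AR(\omega,A)T_2x=\omega\,T_1T(t)R(\omega,A)T_2x - T_1T(t)T_2x
\]
into Theorem~\ref{x1} (or Theorem~\ref{lpingham}), after checking $F\in L^p$ via the same Minkowski argument you use for $h$. The payoff of this choice is that the primitive lands exactly on the target:
\[
\widehat F(0)-\int_0^t F(s)\,ds=-T_1T(t)R(\omega,A)T_2x,
\]
so the tauberian conclusion is already~\eqref{cutofflprates} with no post-processing. You instead apply the tauberian theorem to the raw $f=T_1T(\cdot)T_2x$ and then recover $h$ from $g$ via the identity $h=g-\omega\int_0^\infty e^{-\omega u}g(\cdot+u)\,du$, using monotonicity of $w$ and Minkowski to transfer the weighted $L^p$-bound. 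Both routes are short; the paper's avoids the averaging identity at the cost of computing $\widehat F$ explicitly, while yours keeps the tauberian input as simple as possible and pushes the resolvent structure into an elementary convolution estimate. Your handling of the $p=\infty$ endpoint for $C_0$ (appealing to Theorem~\ref{ingham} since $G$ is analytic across all of $i\R$) is also fine, and matches what the paper implicitly relies on.
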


\begin{proof}
Let us start with the proof of \eqref{cutofflprates}.
Define
\begin{equation*}
F(t)=T_1T(t)(\omega R(\omega, A) - I)T_2x=T_1T(t)A R(\omega, A)T_2x, \qquad t \ge 0.
\end{equation*}
Observe that by the generalized Minkowski inequality,
\begin{eqnarray*}
\|T_1 T(\cdot) R(\omega, A)T_2 x\|_{L^p} &=& \left \| \int_{0}^{\infty}e^{-\omega s}T_1T(s+\cdot)T_2x \, ds \right \|_{L^p}\\
&\le& \int_{0}^{\infty}e^{-\omega s}\| T_1T(s+\cdot)T_2x\|_{L^p} \, ds\\
&\le& \omega^{-1} \|T_1T(\cdot)T_2x\|_{L^p}.
\end{eqnarray*}
Therefore, $F \in L^p(\mathbb R_+, X)$.
Moreover,
\begin{eqnarray*}
\widehat F(\lambda)&=&T_1(\omega R(\lambda, A)R(\omega, A)-R(\lambda, A))T_2x \\
&=&\frac{\lambda}{\omega-\lambda}T_1R(\lambda, A)T_2x - \frac{\omega}{\omega-\lambda} T_1R(\omega, A)T_2x \\
&=&\frac{\lambda}{\omega-\lambda}G(\lambda)-\frac{\omega}{\omega-\lambda}T_1R(\omega, A)T_2x, \qquad \Re\lambda > \omega_0.
\end{eqnarray*}
Hence, by assumption, $\widehat F$ extends analytically to $\Omega_M$ and there exists $c >0$
such that $\|\widehat F(\lambda)\| \le c M(|\Im \lambda|), \lambda \in \Omega_M$.
Now, applying Theorem \ref{x1} to $F$, we obtain that the function
\begin{multline*}
t \mapsto \widehat F(0) -\int_{0}^{t} F(s)\, ds= - T_1R(\omega,A)T_2x -\int_{0}^{t}T_1T(s)A R(\omega, A)T_2x \, ds\\
=-T_1T(t)R(\omega, A)T_2x, \qquad t\ge 0,
\end{multline*}
belongs to $L^p (\mathbb R_+, \wml, X)$, that is, \eqref{cutofflprates} holds.
The proof of \eqref{cutofflp} is similar to the above argument (using Theorem \ref{lpingham} instead of Theorem \ref{x1}), and is omitted.
\end{proof}

Choosing $T_1 = T_2 = I$,
the following corollary of Theorem \ref{energydecayabs} is immediate.

\begin{corollary}\label{energyindiv}
Let $(T(t))_{t \ge 0}$ be a $C_0$-semigroup on a Banach space $X$, with generator $A$, and let $\omega >\max(\omega_0,0)$ be fixed.
Let $x \in X$ be such that
\begin{enumerate}[\rm(i)]
\item $T(\cdot)x \in L^p (\mathbb R_+, X)$ for some $p \ge 1$;
\item $R(\cdot, A)x$ extends analytically to $\Omega_M$,
and its extension $G$ satisfies \eqref{q91}.
\end{enumerate}
Then
 \begin{equation*}
 T(\cdot)R(\omega,A) x \in L^p (\mathbb R_+, w, X)\cap C_0(\mathbb R_+,X),
 \end{equation*}
 where $w$ is as in Theorem \ref{energydecayabs}.
\end{corollary}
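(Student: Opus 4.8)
The plan is to deduce the corollary directly from Theorem \ref{energydecayabs} by specializing to $T_1 = T_2 = I$. With this choice, hypothesis (i) of the corollary, that $T(\cdot)x \in L^p(\R_+,X)$, is exactly hypothesis (i) of Theorem \ref{energydecayabs} for the operators $T_1 = T_2 = I$, and hypothesis (ii) of the corollary says precisely that $T_1 R(\cdot,A)T_2 x = R(\cdot,A)x$ extends analytically to an analytic function $G = G$ on $\Omega = \Omega_M$ satisfying the growth bound \eqref{q91}. So hypothesis (ii) of Theorem \ref{energydecayabs} holds and the extra quantitative assumption \eqref{q91} needed for the weighted conclusion is available as well.

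The only point that needs to be recorded is that $\Omega_M$ is an open set containing $\overline{\C}_+$, as required in hypothesis (ii) of Theorem \ref{energydecayabs}. This is automatic: since $M$ takes values in $[2,\infty)$, one has $-1/M(|\Im\lambda|) < 0$ for every $\lambda$, so $\{\lambda : \Re\lambda \ge 0\} \subset \Omega_M$, and $\Omega_M$ is open because $M$ is continuous. Thus no further argument is needed to verify the applicability of Theorem \ref{energydecayabs}.

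Applying Theorem \ref{energydecayabs} then gives, from \eqref{cutofflp}, that $T(\cdot)R(\omega,A)x \in L^p(\R_+,X) \cap C_0(\R_+,X)$, and, from \eqref{cutofflprates}, that $T(\cdot)R(\omega,A)x \in L^p(\R_+,w,X)$ with $w(t) = \wml(kt)$ for the same $k>0$ produced there. Intersecting these two memberships yields exactly the asserted conclusion $T(\cdot)R(\omega,A)x \in L^p(\R_+,w,X)\cap C_0(\R_+,X)$. There is no genuine obstacle here: the entire content of the corollary is contained in Theorem \ref{energydecayabs}, and the proof is a one-line specialization together with the harmless observation $\overline{\C}_+ \subset \Omega_M$.
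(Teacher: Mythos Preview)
Your proof is correct and matches the paper's approach exactly: the paper simply states that the corollary is immediate from Theorem \ref{energydecayabs} upon choosing $T_1 = T_2 = I$. Your added verification that $\overline{\C}_+ \subset \Omega_M$ is a harmless elaboration of what the paper leaves implicit.
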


Now let $(T(t))_{t\ge0}$ be a $C_0$-semigroup of contractions on a Hilbert space $X$, with generator $A$.  Assume that $D(A^*) = D(A)$, and consider the operator $-(A+A^*)$ with domain $D(A)$.  This operator is symmetric and non-negative, since $A$ is dissipative.  Let $S$ be any non-negative, self-adjoint extension of $-(A+A^*)$ (for example, the Friedrichs extension), and let $B = S^{1/2}$.  Then $D(A) \subset D(B)$ and $B$ is $A$-bounded, since
\begin{multline} \label{BAbd}
\|Bx\|^2 = - ( (A+A^*)x,x ) \le 2 \|Ax\| \, \|x\| \\ \le (\|Ax\| + \|x\|)^2, \,\, x \in D(A).
\end{multline}

\begin{theorem} \label{Hilbert}
Let $(T(t))_{t \ge0}$ be a $C_0$-semigroup of contractions on a Hilbert space $X$, with generator $A$.  Assume that $D(A) = D(A^*)$ and $\sigma(A) \cap i\R$ is empty. Let $M$ be a continuous increasing function such that \eqref{resbd} holds, and let $B$ be as above.  Then
\[
B T(\cdot) A^{-1}x \in L^2(\R_+,w,X) \cap C_0(\R_+,X),  \qquad x \in X,
\]
where $w$ is as in Theorem \ref{energydecayabs}.
\end{theorem}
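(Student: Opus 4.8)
The plan is to apply Theorem~\ref{x1} with $p=2$ to the function $h(s):=-BT(s)x$ for $x\in D(A)$, and afterwards to remove the restriction $x\in D(A)$ by approximation. The key input is the energy identity: since $(T(t))$ consists of contractions and $D(A)=D(A^*)$, for $y\in D(A)$ we have $\|By\|^2=(Sy,y)=-\big((A+A^*)y,y\big)=-2\Re(Ay,y)\ge 0$, whence $\tfrac{d}{dt}\|T(t)y\|^2=2\Re(AT(t)y,T(t)y)=-\|BT(t)y\|^2$, and integrating gives $\int_0^\infty\|BT(s)y\|^2\,ds\le\|y\|^2$. In particular, for $x\in D(A)$ the function $h=-BT(\cdot)x$ lies in $L^2(\R_+,X)$ with $\|h\|_2\le\|x\|$.

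Next I would identify $\widehat h$ and verify the hypotheses of Theorem~\ref{x1}. For $\Re\lambda>0$, since $R(\lambda,A)x=\int_0^\infty e^{-\lambda s}T(s)x\,ds$ lies in $D(A)\subset D(B)$, $B$ is closed, and $e^{-\lambda s}BT(s)x$ is integrable on $\R_+$ (by Cauchy--Schwarz, as $BT(\cdot)x\in L^2$), one gets $\widehat h(\lambda)=-BR(\lambda,A)x$. By \eqref{resbd} the resolvent extends analytically to $\Omega_{2M}$ with $\|R(\lambda,A)\|\le 2M(|\Im\lambda|)$ there, so $\lambda\mapsto R(\lambda,A)x$ and $\lambda\mapsto AR(\lambda,A)x=\lambda R(\lambda,A)x-x$ are analytic into $X$ on $\Omega_{2M}$; hence $\lambda\mapsto R(\lambda,A)x$ is analytic into $D(A)$ for the graph norm, and since $\|Bz\|\le\|Az\|+\|z\|$ by \eqref{BAbd}, $\widehat h$ extends analytically to $\Omega_{2M}$ by the same formula, with $\|\widehat h(\lambda)\|\le(1+|\lambda|)\|R(\lambda,A)x\|+\|x\|\le C(1+|\Im\lambda|)M(|\Im\lambda|)\|x\|$ on $\Omega_{2M}$ (using $|\Re\lambda|\le\tfrac14$ and $M\ge2$). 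Thus Theorem~\ref{x1} applies with $M$ replaced by $2M$, $p=2$, $\alpha=\beta=1$. A short computation using $\widehat h(0)=-BR(0,A)x=BA^{-1}x$, $\int_0^tT(s)x\,ds=A^{-1}(T(t)x-x)$, linearity of $B$ on $D(B)$, and $A^{-1}T(t)=T(t)A^{-1}$ shows $\widehat h(0)-\int_0^th(s)\,ds=BT(t)A^{-1}x$. Hence, for $x\in D(A)$, $BT(\cdot)A^{-1}x\in L^2\big(\R_+,w_{2M,\log}(k\cdot),X\big)\cap C_0(\R_+,X)$ with $\|BT(\cdot)A^{-1}x\|_{L^2(w)}\le C\|x\|$ (the constant depending only on $M$, since $h$ and $\widehat h$ are linear in $x$); and since $2M_{\log}\le(2M)_{\log}\le 4M_{\log}$ one has $w_{2M,\log}(kt)\ge\wml(kt/4)$, so after relabelling $k$ this is the weight $w$ of Theorem~\ref{energydecayabs}.

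To remove the assumption $x\in D(A)$, choose $x_n\in D(A)$ with $x_n\to x$; then $A^{-1}x_n\to A^{-1}x$ and $AA^{-1}x_n=x_n\to x$, so $A^{-1}x_n\to A^{-1}x$ in the graph norm, whence $T(t)A^{-1}x_n\to T(t)A^{-1}x$ in the graph norm and $BT(t)A^{-1}x_n\to BT(t)A^{-1}x$ in $X$ for each $t$; by the previous paragraph and Fatou's lemma, $BT(\cdot)A^{-1}x\in L^2(\R_+,w,X)$ with norm $\le C\|x\|$. For $C_0$-membership in general, use $z=\omega R(\omega,A)z-R(\omega,A)Az$ on $D(A)$ (with $\omega>0$, so that $BR(\omega,A)$ is bounded) to write $BT(t)A^{-1}x=\omega BR(\omega,A)T(t)A^{-1}x-BR(\omega,A)T(t)x$; then
\[
\|BT(t+s)A^{-1}x-BT(t)A^{-1}x\|\le\|BR(\omega,A)\|\big(\omega\|(T(s)-I)A^{-1}x\|+\|(T(s)-I)x\|\big)\to0
\]
as $s\to0$, uniformly in $t\ge0$, so $BT(\cdot)A^{-1}x$ is uniformly continuous, and a uniformly continuous function in $L^2(\R_+,X)$ lies in $C_0(\R_+,X)$.

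The main obstacle is the bookkeeping forced by the unboundedness of $B$: passing to the analytic continuation of $\widehat h$ on $\Omega_{2M}$ through graph-norm analyticity of $R(\cdot,A)x$, and treating $x\notin D(A)$ --- for which $T(t)x$ need not belong to $D(B)$ --- via the resolvent identity above. The single non-routine ingredient is the energy identity, which is exactly what puts $h$ in $L^2$ and lets the $L^p$-tauberian theorem engage.
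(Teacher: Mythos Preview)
Your proof is correct and follows essentially the same route as the paper: define $F(t)=BT(t)x$ for $x\in D(A)$, use the energy identity to put $F$ in $L^2(\R_+,X)$, identify $\widehat F(\lambda)=BR(\lambda,A)x$ with the required polynomial-in-$M$ bound on $\Omega_{2M}$, apply Theorem~\ref{x1}, and then pass to general $x$ by approximation and Fatou. The only notable difference is the $C_0$-argument: the paper dispatches it in one line for \emph{all} $x\in X$ at the outset by observing that $BA^{-1}$ is bounded and $T(t)x\to 0$ (from Theorem~\ref{semigrouprates}), whereas you take the longer route of proving uniform continuity via the resolvent identity and then using $L^2\cap\mathrm{UC}\subset C_0$. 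Both are valid, but the paper's version is shorter and avoids the extra bookkeeping.
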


\begin{proof}  Since $B$ is $A$-bounded, $BA^{-1}T(\cdot)$ is strongly continuous and uniformly bounded.  By Theorem \ref{semigrouprates}, $\lim_{t\to\infty} \|T(t)x\| = 0$ for all $x \in X$, so $BA^{-1}T(\cdot)x \in C_0(\R_+,X)$ for all $x \in X$.

Now, take $x \in D(A)$ with $\|x\|=1$.
Let $F(t) = BT(t)x$.  Then
\[
\|F(t)\|^2 = - ( AT(t)x, T(t)x ) - (A^*T(t)x, T(t)x ) = - \frac{d}{dt} \|T(t)x\|^2.
\]
Hence $F \in L^2(\R_+, X)$. In fact,
\[
\int_0^\infty \|F(t)\|^2 \, dt = 1,
\]
since $\lim_{t\to\infty} \|T(t)x\| = 0$.

Since $B$ is $A$-bounded, it is easy to see that
\[
\widehat F(\lambda) = B R(\lambda,A)x, \qquad \lambda \in \C_+.
\]
Thus $\widehat F$ extends analytically by the same formula to $\rho(A)$ and, by \eqref{BAbd},
\begin{align*}
\|\widehat F(\lambda)\| &\le \|AR(\lambda,A)x\| + \|R(\lambda,A)x\| \\
&= \|\lambda R(\lambda,A)x - x\|+ \|R(\lambda,A)x\| \\
&\le C (1 + |\Im\lambda|) M(|\Im\lambda|), \qquad \lambda \in \Omega_{2M}.
\end{align*}
Moreover
\[
\widehat F(0) - \int_0^t F(s) \, ds = - BT(t) A^{-1} x,
\]
so Theorem \ref{x1} shows that $BT(\cdot)A^{-1}x \in L^2(\R_+,w,X) \cap C_0(\R_+,X)$ and
\begin{equation} \label{BTAest}
\|BT(\cdot) A^{-1}x\|_{L^2(\R_+,w,X)} \le C,
\end{equation}
where the constant $C$ is independent of $x$ when $x \in D(A)$ and $\|x\|=1$.

For $y \in X$ with $\|y\|=1$, we may take a sequence $(x_n)$ in $D(A)$ converging to $y$ with $\|x_n\|=1$.  Using \eqref{BTAest} for $x= x_n$, the boundedness of the operator $BT(t)A^{-1}$, and Fatou's Lemma, we infer that \eqref{BTAest} is also true for $x=y$.
\end{proof}

\begin{example} \label{ex64}
Let $\mathcal{B}$ be a self-adjoint, positive-definite operator on a Hilbert space $\mathcal{H}$, and let $X$ be the Hilbert space $D(\mathcal{B}^{1/2}) \times \mathcal{H}$ with the inner product
\[
\big ( (x_1,x_2), (y_1,y_2) \big)_X = (\mathcal{B}^{1/2}x_1, \mathcal{B}^{1/2}y_1 )_{\mathcal{H}} + (x_2,y_2)_{\mathcal{H}}.
\]
Let $\mathcal{A}$ be an operator such that $D(\mathcal{B}^{1/2}) \subset D(\mathcal{A})$, $D(\mathcal{B}^{1/2}) \subset D(\mathcal{A}^*)$, and $\mathcal{A}$ and $\mathcal{A}^*$ are both bounded in the graph norm on $D(\mathcal{B}^{1/2})$.  Let $A$ be the operator on $X$ defined by
\begin{equation} \label{defA2}
D(A) = D(\mathcal{B}) \times D(\mathcal{B}^{1/2}), \qquad A = \begin{pmatrix} 0 & I \\ -\mathcal{B} & - \mathcal{A} \end{pmatrix}.
\end{equation}
Then a short calculation \cite[Lemma 1, p.74]{Yak99} shows that
\[
D(A^*) = D(\mathcal{B}) \times D(\mathcal{B}^{1/2}), \qquad A^* = \begin{pmatrix} 0 & -I \\ \mathcal{B} & -\mathcal{A}^* \end{pmatrix}.
\]
So
$$
-(A + A^*) = \begin{pmatrix}  0 & 0 \\ 0 & \mathcal{A} + \mathcal{A^*} \end{pmatrix}.
$$
\end{example}

Consider the damped wave equation \eqref{wave} on $\mathcal M$ with
non-empty boundary.
Let $\mathcal{H}
= L^2(\mathcal M)$, and $\mathcal{B} = -\Delta$ with
$D(\mathcal{B}^{1/2}) = H_0^1(\mathcal M)$ and $D(\mathcal{B}) =
H^2(\mathcal M) \cap H_0^1(\mathcal M)$, and $\mathcal{A}$ be the
bounded operator of multiplication by $a$.  These choices fit both
Theorem~\ref{Hilbert} and Example~\ref{ex64} and the operators $A$
defined in \eqref{defA0}--\eqref{defA1} and \eqref{defA2} coincide.

A similar illustration of Theorem~\ref{Hilbert} and Example~\ref{ex64} arises when $\mathcal M$ has no boundary, for example
when $M$ is a torus. Then the boundary condition is omitted from
\eqref{wave}, and $A$, defined by \eqref{defA1} on
 \begin{equation*}
 D(A)=H^2(\mathcal M)\times H^1(\mathcal M),
 \end{equation*}
generates a $C_0$-semigroup of contractions on $X:=H^1(\mathcal M)\times
L^2(\mathcal M)$. In this case
$A$ is not invertible and $0$ is an isolated eigenvalue
corresponding to constant solutions of \eqref{wave}, see
\cite{Le96} and \cite{AnLe12}.  Let $P_0$ be the spectral (Riesz)
projection corresponding to $0$, and let $X_0:= (I-P_0)X$ be equipped
with the inner product $$\langle u, v \rangle_0 =
\langle (-\Delta) u_0, v_0 \rangle + \langle u_1,v_1\rangle, \qquad u=(u_0,u_1),
v=(v_0,v_1) \in X,$$
which is equivalent to the original inner product $\langle \cdot, \cdot \rangle$
on $X$. Then $A_0:=(I-P_0)A$ generates a $C_0$-semigroup of
contractions $(T_{P_0}(t))_{t \ge 0}$ on the Hilbert space $X_0$.
Furthermore,
\begin{equation*}
E(u,t) \sim \|T_{P_0}(t)(I-P_0)u\|^2_0, \qquad u \in X,
\end{equation*}
$A_0$ is invertible in $X_0$ and satisfies the same resolvent estimate
on $i\mathbb R$ as $A$.
Moreover, $A_0$ is given by the matrix \eqref{defA1} restricted to
$X_0$, where $-\Delta$ is now positive definite. See \cite{AnLe12} for the above
properties,
in particular \cite[Part II.4]{AnLe12}. Thus, the study
of energy asymptotics of \eqref{wave} in resolvent terms is
reduced in this case to that for $\mathcal M$ with
boundary considered above.  Both cases are studied thoroughly
in \cite{Le96} and \cite[Parts I.2, II]{AnLe12}, for example.

The following corollary provides a concrete application of Theorem~\ref{Hilbert} and reveals new asymptotic properties of classical
solutions to damped wave equations. Recall that $u$ is said to be
a classical solution of \eqref{wave} on $\mathcal M$ with the boundary $\partial M\ne\emptyset$ if
$$
u \in C(\mathbb R_+,
H^2(\mathcal M) \cap H_0^1(\mathcal M))\cap C^1(\mathbb R_+, H^1_0(\mathcal M)),
$$
and $u$ satisfies \eqref{wave}.  When $\partial M =
\emptyset$ this has to be modified in an obvious way.

\begin{corollary}\label{c65}    Assume that the damped wave equation \eqref{wave} (where
$\partial \mathcal M$ can be empty) is stable at rate $r(t)$.
Then, for each classical solution $u$ of \eqref{wave}, one has
\begin{equation}\label{derivative}
\int_{0}^{\infty} \left|\frac{d}{dt} E(u,t)\right|\, w(t)^2\, dt<
\infty
\end{equation}
where  $M(s) = r^{-1}(c/s)$ for some $c>0$ and $w(t) =
w_{M,\log}(kt)$ for some $k>0$.
\end{corollary}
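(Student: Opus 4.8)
The plan is to recognise $\tfrac{d}{dt}E(u,t)$ as a constant multiple of $\|BT(t)x\|_X^2$ for the operator $B$ produced by Example~\ref{ex64}, and then to deduce \eqref{derivative} directly from Theorem~\ref{Hilbert}. Consider first the case $\partial\mathcal M\neq\emptyset$. As explained after Example~\ref{ex64}, with $\mathcal H=L^2(\mathcal M)$, $\mathcal B=-\Delta$ (so $D(\mathcal B^{1/2})=H^1_0(\mathcal M)$) and $\mathcal A$ the bounded multiplication operator by $a$, the operator $A$ given by \eqref{defA1}--\eqref{defA0} coincides with the operator \eqref{defA2}; it generates a contraction semigroup $(T(t))_{t\ge0}$ on $X=H^1_0(\mathcal M)\times L^2(\mathcal M)$, is invertible, and $D(A)=D(A^*)$. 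Since $a$ is real, $\mathcal A=\mathcal A^*$, so by Example~\ref{ex64} the operator $-(A+A^*)=\begin{pmatrix}0&0\\0&2a\end{pmatrix}$ is bounded, self-adjoint and non-negative; taking $S=-(A+A^*)$ and $B=S^{1/2}=\begin{pmatrix}0&0\\0&\sqrt{2a}\end{pmatrix}$ puts us in the setting of Theorem~\ref{Hilbert} ($B$ is bounded, hence $A$-bounded). Finally, stability at a rate $r(t)\to0$ gives $\|T(t)A^{-1}\|\to0$, so $\sigma(A)\cap i\R=\emptyset$ by Theorem~\ref{semigrouprates}.

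\textbf{The energy identity.} If $u$ is a classical solution with $x=(u(0),u_t(0))\in D(A)$, then $T(t)x=(u(t),u_t(t))$, and a standard computation (differentiate $E(u,\cdot)$, substitute $u_{tt}=\Delta u-au_t$, and integrate by parts using $u|_{\partial\mathcal M}=0$) yields
\[
\frac{d}{dt}E(u,t)=-\int_{\mathcal M}a\,|u_t(t)|^2\,dx\le0 .
\]
Hence $\bigl|\tfrac{d}{dt}E(u,t)\bigr|=\int_{\mathcal M}a\,|u_t(t)|^2\,dx=\tfrac12\|BT(t)x\|_X^2$. Writing $x=A^{-1}y$ with $y:=Ax\in X$ (well defined since $x\in D(A)$ and $A$ is invertible), this reads $\bigl|\tfrac{d}{dt}E(u,t)\bigr|=\tfrac12\|BT(t)A^{-1}y\|_X^2$.

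\textbf{Invoking Theorem~\ref{Hilbert}.} It remains to exhibit a continuous increasing $M:\R_+\to[2,\infty)$ with $\|R(is,A)\|\le M(|s|)$ and $M(s)\asymp r^{-1}(c/s)$. This is the converse direction in Theorem~\ref{semigrouprates}: with $M_1(s)=\sup\{\|(ir+A)^{-1}\|:|r|\le s\}$ one has $c'/M_1^{-1}(C't)\le\|T(t)A^{-1}\|\le C_0 r(t)$ for large $t$, whence $M_1^{-1}(C't)\ge c'/(C_0 r(t))$ and, substituting $s=c'/(C_0 r(t))$ (so $t=r^{-1}(c'/(C_0 s))$), $M_1(s)\le C'\,r^{-1}(c_1/s)$ for large $s$ with $c_1=c'/C_0$. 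Thus, after adjusting its values on a bounded interval, $M(s):=\max\bigl(2,C'r^{-1}(c_1/s)\bigr)$ is admissible and $\|R(is,A)\|\le M(|s|)$. Theorem~\ref{Hilbert} then gives $BT(\cdot)A^{-1}y\in L^2(\R_+,w,X)$ with $w(t)=\wml(kt)$ for some $k>0$, so
\[
\int_0^\infty\Bigl|\frac{d}{dt}E(u,t)\Bigr|\,w(t)^2\,dt=\tfrac12\,\|BT(\cdot)A^{-1}y\|_{L^2(\R_+,w,X)}^2<\infty ,
\]
which is \eqref{derivative}.

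\textbf{The boundaryless case and the main obstacle.} When $\partial\mathcal M=\emptyset$, $A$ is not invertible ($0$ is an isolated eigenvalue with spectral projection $P_0$), and one works on $X_0=(I-P_0)X$ with $A_0=(I-P_0)A$, which generates a contraction semigroup on $X_0$ for an equivalent inner product, is invertible there, satisfies the same resolvent bound on $i\R$, and for which $E(u,t)\sim\|T_{P_0}(t)(I-P_0)x\|_0^2$; moreover $D(A_0)=D(A_0^*)$ and $-(A_0+A_0^*)$ is again the restriction of $\begin{pmatrix}0&0\\0&2a\end{pmatrix}$, so the argument above runs verbatim with $A_0$ in place of $A$. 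The one genuinely delicate point is the passage ``rate $r$ $\Rightarrow$ resolvent bound'' together with the ensuing comparison of weights: the lower estimate of Theorem~\ref{semigrouprates} only delivers $\|R(is,A)\|\le C'r^{-1}(c_1/|s|)$, and one must verify that $\wml$ built from $M(s)=r^{-1}(c/s)$ is, for a good choice of $c$ and $k$, dominated by $\wml$ built from this slightly larger bound. Here the freedom in $k$ (a rescaling $t\mapsto kt$) and in $c$ is exactly what is needed, and the comparison goes through for the polynomial and logarithmic rates that occur for \eqref{wave}; everything else is routine checking of the hypotheses of Theorems~\ref{Hilbert} and \ref{semigrouprates}.
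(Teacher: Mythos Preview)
Your proposal is correct and follows essentially the same route as the paper: identify $B=\begin{pmatrix}0&0\\0&\sqrt{2a}\end{pmatrix}$ via Example~\ref{ex64}, convert the stability rate into the resolvent bound $\|R(is,A)\|\le Cr^{-1}(c/|s|)$ through the lower estimate in Theorem~\ref{semigrouprates}, apply Theorem~\ref{Hilbert} with $x=A(u_0,u_1)$, and read off \eqref{derivative} from the energy identity $\tfrac{d}{dt}E(u,t)=-\int_{\mathcal M}a|u_t|^2$; the boundaryless case is handled by passing to $A_0$. Your discussion of the weight comparison is more cautious than the paper's, but the extra multiplicative constant $C'$ in front of $r^{-1}(c_1/s)$ is harmless since replacing $M$ by $C'M$ only perturbs $M_{\log}$ by a bounded factor and is absorbed into the free constant $k$.
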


\begin{proof}
Assume that $\partial M\neq \emptyset$. Let $X=H_0^1(\mathcal
M)\times L^2(\mathcal M)$, and let an operator $A$ on $X$ be
defined by \eqref{defA0} and \eqref{defA1}. Then, as we mentioned in
the introduction,  $A$ generates a $C_0$-semigroup of contractions
in $X$, and $i\mathbb R \subset \rho (A)$.
By our assumption and Theorem~\ref{semigrouprates}
or by
\cite[Theorem 4.4.14]{ABHN01}, there exist positive constants
$c,C$ such that $\|R(is,A)\| \le Cr^{-1}(c/s)$. Now we can apply
Theorem~\ref{Hilbert}, with
$$
B = \begin{pmatrix} 0 & 0 \\ 0 & (2a)^{1/2} \end{pmatrix}, \qquad
x = A(u_0,u_1),
$$
to obtain that
$$ \int_{0}^{\infty} w(t)^2 \int_{\mathcal M} a(x)
|u_t(t; x)|^2 \, dx \, dt < \infty.
$$
It remains to note that
$$
\frac{d}{dt} E(u,t)=-\int_{M} a |u_t(t,x)|^2 \, dx \le 0, \qquad t \ge 0.
$$

If $\partial M=\emptyset$, then the same argument applied to
$A_0$ gives \eqref{derivative}.
\end{proof}

If the set of damping (where $a>0$) has non-empty interior in
$\mathcal M$, then \eqref{wave} is stable at rate $r(t)= 1/\log t$ by \cite{Le96} and \cite{Bu98}, and then $w(t) \sim \log t$.  If the set of
damping  satisfies the so-called geometric control condition
then \eqref{wave} is stable at rate $r(t) =  e^{-\beta t}$
(see \cite{RaTa74} and \cite{BLR92}), and then $w(t) \sim e^{\gamma \sqrt t}$ for some $\gamma >0$.  Note that, according to \cite{Sch10}, if the geometric control
condition fails, then the energy decay can be at an exponential rate for sufficiently smooth initial data $(u_0,u_1)$.  In
many cases where the geometric control condition is
not satisfied, \eqref{wave} is stable at rate $r(t) = t^{-\beta}$
for some $\beta>0$ (see e.g. \cite{AnLe12}).  Then $w(t) \sim (t/\log t)^\beta$.  In
other
cases (see \cite{Ch07},\cite{Ch10}), the stability occurs at rate
$r(t)=e^{-\beta \sqrt t}$ and this gives rise to $w(t) \sim
e^{\gamma t^{1/3}}$ for a fixed $\gamma >0$.

See also \cite{AnLe12} for a recent discussion of these cases and a pertinent study of
polynomial rates when $\mathcal M$ is the torus.

 We note that the operator approach to the study of energy decay in similar settings has a long
history, and there is a vast literature on subject.
A comprehensive account would take too much space here, so we just refer to the recent survey \cite{Alabau} and the papers cited therein.
Among the papers going back to the early days of abstract theory, one could mention \cite{Lagnese}, \cite{Lasiecka} and \cite{Quinn}.

\begin{remark} It is plausible that our results can be applied also to the study of local energy for damped wave equations on exterior domains. See, for example, \cite{AK02}, \cite{Kh03} and \cite{BoRo13} where the operator-theoretic approach played a role. However, this setting seems to require essential extra work in order to put it into our framework, and we do not consider it in this paper.
\end{remark}

\section{Optimality for semigroups}\label{se7}

First we show that while $L^p$-rates for $C_0$-semigroups as in \eqref{hope} are straightforward consequences of their $L^{\infty}$-counterparts, nevertheless the estimate \eqref{hope} is sharp in the case of polynomial rates.

\begin{theorem}\label{xyz}
Given $\alpha > 0$, $p\ge 1$, and a positive function $\gamma\in C_0(\mathbb R_+)$, there exist a Banach space $X_{\alpha}$, a
bounded $C_0$-semigroup \newline\noindent
$(T(t))_{t \ge 0}$ on $X_{\alpha}$ with
generator $A$, and a vector $f\in X_{\alpha}$, such that
\begin{enumerate} [\rm (a)]
\item \label{71a} $\sigma(A) \cap i\R$ is empty and $\|R(is,A)\|_{\mathcal{L}(X_\alpha)}={\rm O}(|s|^{\alpha}), \qquad |s| \to \infty$, \smallskip
\item \label{71b} $\displaystyle \int_0^\infty \|T(t)f\|^p_{X_{\alpha}}dt<\infty$, \smallskip
\item \label{71c} $\displaystyle \int_0^\infty\|T(t)A^{-1}f\|^p_{X_{\alpha}} \Bigl(\frac{t}{\gamma(t)\log (t+2)}\Bigr)^{p/\alpha}
\,dt=\infty$.
\end{enumerate}
\end{theorem}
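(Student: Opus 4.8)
The plan is to realise the purely function-theoretic construction of Theorem~\ref{example} as a single orbit of a $C_0$-semigroup, so that the very same measures $\mu_n=\mu(k_n)$ supplied by Proposition~\ref{prop} do the work. Concretely, I would take $X_\alpha$ to be the closed linear span in $L^p(\R_+)$ (and in $C_0(\R_+)$ when $p=\infty$) of the Laplace transforms $\XL\nu$ of compactly supported complex measures $\nu$ on $\C\setminus\Omega$, with $\Omega$ as in Theorem~\ref{example}; let $(T(t))_{t\ge0}$ be the left-translation semigroup, which is a contraction semigroup preserving $X_\alpha$ because $T(t)\XL\nu=\XL(e^{t\cdot}\nu)$ and $e^{t\cdot}\nu$ is again a compactly supported measure on $\C\setminus\Omega$; and let $A$ be its generator. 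On the dense subspace of Laplace transforms, $A$ acts by ``multiplication by the spectral parameter'', $A\XL\nu=\XL(\zeta\,d\nu)$, and for $\lambda\in\Omega$ the candidate resolvent is $\XL\nu\mapsto\XL\bigl(\nu/(\lambda-\zeta)\bigr)$, which for $\lambda=is$ is the operation $g\mapsto\int_0^\infty e^{-isu}T(u)g\,du$.

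For part~(\ref{71a}), the point is that $\C\setminus\Omega$ stays at distance $\asymp(1+|s|)^{-\alpha}$ from $is$, so on the Laplace-transform side the candidate resolvent is the map $\widehat g(z)\mapsto\bigl(\widehat g(z)-\widehat g(is)\bigr)/(is-z)$, and a Cauchy-estimate and contour-deformation argument on $\Omega$, combined with $\dist(is,\partial\Omega)\asymp(1+|s|)^{-\alpha}$, yields that this operator extends to a bounded operator on $X_\alpha$ with norm $O\bigl((1+|s|)^\alpha\bigr)$; together with the resolvent identities on the dense subspace this gives $i\R\subset\rho(A)$ and the required bound. The uniform-in-$t$ estimate \eqref{XQ4} on $\mathcal G\mu(t,z)=\widehat{T(t)\XL\mu}(z)$ is exactly what keeps the Laplace transforms of the translates $T(t)f$ under control in $\Omega$, and hence places the vector $f$ below inside $X_\alpha$.

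The vector is $f=\sum_{n\ge1}a_n\,\XL\mu_n$, with the $k_n$ increasing very fast ($k_n>3k_{n-1}$, so the bumps $\{|t-k_n|<k_n/2\}$ are pairwise disjoint) and chosen at stage $n$ large enough that $\gamma(k_n-\sqrt{k_n})$ is as small as required, and with $a_n>0$ small; the series converges by \eqref{X3}. For part~(\ref{71b}) one uses $\|T(t)f\|_{X_\alpha}^p=\int_t^\infty|f|^p$, so $\int_0^\infty\|T(t)f\|_{X_\alpha}^p\,dt=\int_0^\infty s|f(s)|^p\,ds$, which by \eqref{X3} and the disjointness of the bumps is bounded by $C\sum_n a_n^p k_n^{3/2}+C\bigl(\sum_n a_n\bigr)^p$, finite for a suitable choice of $a_n$. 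For part~(\ref{71c}) one has $A^{-1}f=\sum_n a_n\XN\mu_n$, and on the bump $|t-k_n|<\sqrt{k_n}$ the estimates \eqref{X5} and \eqref{X6} (the latter bounding the tails of the other terms, since $|t-k_m|>k_m/2$ for $m\ne n$) give $|A^{-1}f(t)|\gtrsim a_n(\log k_n/k_n)^{1/\alpha}-Ce^{-\rho t}$; feeding this into $\|T(t)A^{-1}f\|_{X_\alpha}^p=\int_t^\infty|A^{-1}f|^p$ together with the weight $(t/(\gamma(t)\log t))^{p/\alpha}$, the $n$-th bump contributes at least $c\sqrt{k_n}\,a_n^p\,\gamma(k_n)^{-p/\alpha}$, and the freedom in choosing $k_n$ (against the fixed, $C_0$ function $\gamma$) and $a_n$ lets one arrange simultaneously $\sum_n\sqrt{k_n}a_n^p(\log k_n/k_n)^{p/\alpha}<\infty$ and $\sum_n\sqrt{k_n}a_n^p\gamma(k_n)^{-p/\alpha}=\infty$, exactly as in the proof of Theorem~\ref{example}.

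The main obstacle is the resolvent estimate in (\ref{71a}): a priori the operator $g\mapsto\int_0^\infty e^{-isu}T(u)g\,du$ is unbounded on $L^p(\R_+)$, since $is$ lies in the spectrum of the generator of translation on all of $L^p(\R_+)$, so one must genuinely use that $X_\alpha$ consists of limits of Laplace transforms of measures supported in $\C\setminus\Omega$ --- essentially a Paley--Wiener type statement that such functions admit an analytic continuation of their Laplace transforms to $\Omega$ with at most polynomial growth in $|\Im z|$ --- in order to obtain the $O(|s|^\alpha)$ bound uniformly over the dense subspace. Making this quantitative, with the correct power of $|s|$, is where the geometry of $\Omega$ and the bound \eqref{XQ4} have to be exploited carefully. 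A secondary point is to check that the dense subspace of Laplace transforms is a core for $A$, so that the resolvent identities extend from it to all of $X_\alpha$.
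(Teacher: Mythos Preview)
Your high-level plan---use the measures $\mu_n$ from Proposition~\ref{prop}, take a left-shift semigroup, and set $f=\sum_n a_n\,\XL\mu_n$---is exactly right, and your treatment of (\ref{71b}) and (\ref{71c}) would go through once the space is correctly chosen. The genuine gap is in (\ref{71a}), and it stems from your definition of $X_\alpha$ as the \emph{$L^p$-closure} of the span of the $\XL\nu$. Convergence in $L^p(\R_+)$ gives no control of Laplace transforms on $\Omega\setminus\C_+$: a limit $g$ of such functions need not have $\widehat g$ extending to $\Omega$ at all, and even if it does, you have no bound on $\|\widehat g\|$ there in terms of $\|g\|_{L^p}$. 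So the ``Paley--Wiener type statement'' you invoke is not available, and the candidate resolvent $g\mapsto\XL(\nu/(\lambda-\zeta))$, which is defined only on the dense subspace, cannot be shown to extend boundedly. You recognise this as the main obstacle, but the sketch does not overcome it; in fact the operator you write down is genuinely unbounded in the $L^p$-norm, since the imaginary axis lies in the spectrum of left translation on all of $L^p(\R_+)$, and passing to a closed $L^p$-subspace does not by itself remove spectrum.

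The paper resolves this by building the Laplace-transform control into the \emph{norm} of $X_\alpha$: one takes $X_\alpha\subset\mathrm{BUC}(\R_+)$ with
\[
\|f\|_{X_\alpha}=\|f\|_\infty+\sup_{\lambda\in\Omega_0}\frac{|\widehat f(\lambda)|}{(1+|\Im\lambda|)^\alpha},
\]
where $\Omega_0=\Omega\cap\{|\Re\lambda|<1\}$. With this norm the resolvent estimate is almost automatic: $\widehat{R(\lambda,A)f}(\mu)=-(\widehat f(\lambda)-\widehat f(\mu))/(\lambda-\mu)$, and a short case analysis together with Cauchy's estimate on discs of radius $\asymp(1+|\Im\lambda|)^{-\alpha}$ gives $\|R(\lambda,A)\|\le C(1+|\Im\lambda|)^\alpha$. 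The price is that boundedness of the semigroup is now nontrivial---one needs a uniform-in-$t$ bound on $\|\widehat{T(t)f}\|_\alpha$, obtained via a two-constants/Levinson $\log$-$\log$ argument. This is also why the $t$-uniform estimate \eqref{XQ4} on $\mathcal G\mu_n(t,\cdot)$ is needed: it bounds the $\|\cdot\|_\alpha$-part of $\|T(t)f_n\|_{X_\alpha}$, not (as you suggested) membership of $f$ in $X_\alpha$. With this norm one gets $\|T(t)f_n\|_{X_\alpha}\le C\chi_{\{t\le 2k_n\}}+Ce^{-\rho t}$, and the choice $a_n=2^{-n}k_n^{-1/p}$ then makes (\ref{71b}) and (\ref{71c}) go through exactly along the lines you outlined.
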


\begin{proof}
We use the Banach space $X_{\alpha}$, and the semigroup $(T(t))_{t\ge0}$ constructed in \cite[Theorem 4.1]{BoTo10}. To make our proof self-contained, we give the construction here.

Let $(S(t))_{t \ge 0}$ be the left shift semigroup on the space ${\rm BUC}
(\mathbb R_+ )$ of bound\-ed, uniformly continuous, functions on $\R_+$, and let
\begin{align*}
\Omega &:=\{\lambda \in\mathbb C: \Re  \lambda
> - (1+|\Im \lambda|)^{-\alpha}\}, \\
\Omega_0 &:=\Omega \cap \{\lambda \in \mathbb C: |\Re \lambda| < 1 \}.
\end{align*}
Furthermore, let
$X_{\alpha}$ be the space of functions $f\in {\rm BUC}(\R_+ )$
such that the Laplace transform $\widehat{f}$ extends to an
analytic function in $\Omega_0$ (also denoted by $\widehat f$), and
\begin{equation} \label{xalpha}
|\widehat f(\lambda)|(1+|\Im \lambda|)^{-\alpha} \to 0, \qquad
|\lambda| \to \infty, \, \, \lambda \in \Omega_0.
\end{equation}
Then $X_\alpha$ equipped with the norm
\begin{equation*}
\| f \|_{X_{\alpha}}  := \| f \|_{\infty} + \|f \|_\alpha :=
\|f\|_{\infty} + \sup_{\lambda \in \Omega_0} \frac{|\widehat f(\lambda)|}
{(1+|\Im\lambda|)^{\alpha}},
\end{equation*}
is a Banach space.  Let $(T(t))_{t \ge 0}$ be the restriction of $(S(t))_{t \ge 0}$ to $X_\alpha$.  This is  a bounded $C_0$-semigroup on $X_\alpha$. To prove this assertion we start by fixing $t>0$.   For $\lambda \in \C_+$,
\begin{equation} \label{Tthat}
\widehat {T(t)f} (\lambda) =  \int_t^\infty e^{-\lambda (s-t)} f(s) \, ds = e^{\lambda t}\widehat{f}(\lambda) - e^{\lambda t} \int_{0}^{t} e^{-\lambda s}f(s) \, ds.
\end{equation}
Then $\widehat {T(t)f}$ can be extended to  $\Omega_0$ by the same formula.  For $\lambda\in\Omega_0$, we have
$$
|\widehat {T(t)f} (\lambda)|\le e^{t\Re\lambda}|\widehat {f} (\lambda)|
+\int_{0}^{t}e^{(t-s)\Re\lambda}|f(s)| \, ds.
$$
This and \eqref{xalpha} tell us that
$$
|\widehat {T(t)f}(\lambda)|(1+|\Im \lambda|)^{-\alpha} \to 0, \qquad
|\lambda| \to \infty, \, \, \lambda \in \Omega_0,
$$
so $T(t)f \in X_\alpha$.  Furthermore, for $\lambda\in\C_+$ we have
$$
|\widehat {T(t)f} (\lambda)|\, \Re\lambda\le \|f\|_{\infty}.
$$
We obtain from this, together with \eqref{Tthat} for $\lambda\in \Omega_0 \cap \C_-$, that
$$
|\widehat {T(t)f} (\lambda)|\le C\|f\|_{X_\alpha}\big[|\Re \lambda|^{-1}+(1+|\Im \lambda|)^{\alpha}\big], \qquad \lambda\in\Omega_0 \setminus i\R,
$$
for some $C$ which is independent of $t$.
Applying Levinson's $\log$-$\log$ theorem (see, for example, \cite[Section VII.D7]{Koo})
or, rather, its polynomial growth version  \cite[Lemma 4.6.6]{ABHN01},
to $\widehat {T(t)f}$ in the squares
$$
\{\lambda:|\Re \lambda|<(s+2)^{-\alpha},\,
|s-\Im \lambda|<(s+2)^{-\alpha}\},
$$
we conclude that
\begin{equation}\label{XQ8}
\sup_{\lambda\in\Omega_0}
|\widehat {T(t)f} (\lambda)|(1+|\Im \lambda|)^{-\alpha}\le
C\|f\|_{X_\alpha},
\end{equation}
for some $C$ which is independent of $t$.  Thus, $(T(t))_{t \ge 0}$ is a bounded semigroup on $X_\alpha$.  Furthermore, by the definition of $X_\alpha$ and \eqref{Tthat},
$$
\| T(t)f - f \|_\alpha \to 0, \qquad t \to 0+,
$$
and then
$$
\| T(t)f - f \|_{X_\alpha} \to 0, \qquad t \to 0+.
$$
So $(T(t))_{t \ge 0}$ is a bounded $C_0$-semigroup on $X_\alpha$. We define $A$ to be its generator.

To establish the property (\ref{71a}), we have to show that the resolvent $R(\lambda,A)$ satisfies the estimate
\begin{equation}\label{polyn}
\|R(\lambda,A)f\|_{X_\alpha}\le C\big(1+|\Im \lambda|
\big)^\alpha \| f\|_{X_\alpha}, \quad 0 < \Re \lambda<1,\,f \in X_\alpha.
\end{equation}
Fix $\lambda$ with $\Re\lambda \in (0,1)$. For $t\ge0$, we have
$$
\bigl(R(\lambda,A)f \bigr)(t) = \widehat {T(t)f} (\lambda).
$$
By \eqref{XQ8} we conclude that
\begin{equation}\label{esti}
\big \|R(\lambda,A)f \big \|_{\infty}  \le C(1+|\Im \lambda|)^{\alpha} \big( \| f\|_{\infty}+ \| f \|_\alpha \big),
\quad  0 < \Re \lambda<1.
\end{equation}
A simple calculation shows that
\begin{equation*}
\widehat{(R(\lambda,A)f )}(\mu) = -\frac{\widehat f(\lambda) -\widehat f(\mu)}{\lambda - \mu}, \qquad \Re\mu>1.
\end{equation*}
Therefore, $\widehat{R(\lambda,A)f}$ extends analytically to $\Omega_0$, and
$$
\widehat{(R(\lambda,A)f )}(\mu)=\begin{cases} -\frac{\widehat
f(\lambda) -\widehat f(\mu)}{\lambda - \mu}, \qquad \lambda \neq
\mu,\, \mu \in \Omega_0,\\
-{\widehat f\,}^\prime(\mu), \qquad \qquad \lambda=\mu.
\end{cases}
$$

To estimate $\|R(\lambda,A)f\|_\alpha$, take $\mu \in \Omega_0$.  If $|\lambda-\mu|\ge 1$, then
$$
|\widehat{(R(\lambda,A)f)}(\mu)|\le|\widehat{f}(\lambda)|+|\widehat{f}(\mu)|
\le C(1+|\Im \mu|)^{\alpha}(1+|\Im \lambda|)^{\alpha}\|f\|_\alpha.
$$
Otherwise $1+|\Im\lambda|^\alpha$ and $1+|\Im\mu|^\alpha$ are comparable.  If
$$
1>|\lambda-\mu|\ge \frac{1}{2(1+|\Im \lambda|)^\alpha},\quad \mu\in \Omega_0,
$$
then we have
\begin{eqnarray*}
|\widehat{(R(\lambda,A)f )}(\mu)| &\le&
2(1+|\Im \lambda|)^{\alpha} \bigl(|\widehat{f}(\lambda)|+|\widehat{f}(\mu)| \bigr)\\ &\le&
c(1+|\Im \lambda|)^\alpha(1+|\Im \mu|)^\alpha \|f\|_\alpha.
\end{eqnarray*}
Finally, if
$$
|\lambda-\mu|\le\frac{1}{2(1+|\Im \lambda|)^\alpha},
$$
then, applying Cauchy's formula on the circle
$
\{z\in \mathbb C:|z-\lambda|=\frac 23(1+|\Im \lambda|)^{-\alpha}\},
$
we obtain that
$$
|\widehat{(R(\lambda,A)f)}(\mu)|\le
C(1+|\Im \mu|)^{\alpha} (1+|\Im \lambda|)^{\alpha}\|f\|_\alpha.
$$
Thus,
\begin{equation}\label{esto}
\big\|R(\lambda,A)f \big \|_\alpha  \le C(1+|\lambda|)^{\alpha}  \| f \|_\alpha, \qquad 0<\Re \lambda<1.
\end{equation}

The estimates \eqref{esti} and \eqref{esto} together give us
\eqref{polyn}, and it follows that (\ref{71a}) holds.  Moreover, Theorem \ref{semigrouprates} shows that
\begin{equation} \label{integral}
A^{-1}f = \lim_{t\to \infty} \big(A^{-1}f - T(t)A^{-1}f \big)=-\int_{0}^{\infty} T(t)f \, dt,
\end{equation}
for every $f \in X_\alpha$, where the integral may be improper.

To construct $f \in X_\alpha$ satisfying (\ref{71b}) and (\ref{71c}),  we can assume that $\gamma$ is increasing and that $\gamma(t) \ge t^{-1}$ for all $t\ge1$.  We shall use Proposition \ref{prop}. For a complex measure $\mu$ on $\C \setminus \Omega$ with compact support, let $\XL\mu$,  $\mathcal G\mu$ and $\mathcal N\mu$ be defined by \eqref{Lmu}--\eqref{Nmu}.  For $f = \mathcal{L}\mu$, the integral in \eqref{integral} is absolutely convergent and simple calculations show that
\[
\widehat{T(t)\XL\mu}(z) = \mathcal G\mu(t,z),  \qquad (A^{-1}\XL\mu)(t) =  \mathcal N\mu(t).
\]

By Proposition \ref{prop}, there exist $\{k_n : n \in \N\}\subset \mathbb N$, and complex measures $\mu_n$ with compact support in $\C \setminus \Omega$ such that $k_n \to \infty$ as $n \to \infty$, and, for each $n\ge1$,
\begin{enumerate}[(i)]
\item  by \eqref{X3} and \eqref{XQ4}, $f_n := \XL\mu_n \in X_\alpha$, and
\begin{align*}
\|T(t)f_n\|_{X_\alpha}=\,& \|T(t)\XL\mu_n\|_{\infty}+\|\widehat{T(t)\XL \mu_n}\|_{\alpha}
\\
=\,& \|T(t)\XL\mu_n\|_{\infty}+\|\mathcal G\mu_n(t,\cdot)\|_{\alpha}
\\
\le\,&
c\chi_{\{t:t\le 2k_n\}}+ ce^{-\rho t},
\end{align*}
so
\[
\|T(\cdot)f_n\|_{L^p(\R_+,X_\alpha)}^p = \int_0^\infty \|T(t)f_n\|_{X_\alpha}^p \, dt \le ck_n;
\]
\item by \eqref{X5}, for $0\le t \le k_n$,
\begin{align*}
\|T(t)A^{-1}f_n\|_{\infty} &\ge |(T(t)A^{-1}f_n)(k_n-t)| \\
&= |(A^{-1}f_n)(k_n)|=|\XN\mu_n (k_n)| \ge c_1\Bigl(\frac{\log k_n}{k_n}\Bigr)^{1/\alpha},
\end{align*}
\item by \eqref{X6},
$$
\|T(t)A^{-1}f_n\|_{\infty}\le\, c_2\chi_{\{t:t\le 2k_n\}}\Bigl(\frac{\log k_n}{k_n}\Bigr)^{1/\alpha}+
c_2e^{-\rho t}.
$$
\end{enumerate}
Passing to a subsequence and relabelling, we may assume that
$$
k_1 \ge 3, \quad  k_n \ge \max \left(3, \left(\frac{2c_2}{c_1}\right)^p \right) k_{n-1} \,\, (n\ge2), \quad 2^n \gamma(2k_n/3)^{1/\alpha} \to 0.
$$
Let
$$
f=\sum_{n=1}^\infty 2^{-n}k_{n}^{-1/p}f_{n}.
$$
Then
\[
\|T(\cdot)f\|_{L^p(\R_+,X_\alpha)} \le \sum_{n=0}^\infty 2^{-n} k_{n}^{-1/p} \|T(\cdot)f_{n}\|_{L^p(\R_+,X_\alpha)} < \infty.
\]
Thus (\ref{71b}) holds.

For $2k_{n}/3 \le t \le k_{n}$, we have
\begin{align*}
\lefteqn{\hskip-20pt\|T(t)A^{-1}f\|_\infty} \\
 &\ge 2^{-n}k_{n}^{-1/p} \|T(t)A^{-1}f_{n}\|_\infty - \sum_{m \ne n} 2^{-m}k_{m}^{-1/p} \|T(t)A^{-1}f_{m}\|_\infty \\
&\ge c_1  2^{-n}k_{n}^{-1/p} \left( \frac{ \log k_{n}}{k_{n}} \right)^{1/\alpha} \\
&\phantom{XX} - c_2 \sum_{m=n+1}^\infty  2^{-m} k_{m}^{-1/p} \left( \frac{ \log k_{m}}{k_{m}} \right)^{1/\alpha} - c_2 \sum_{m=1}^{n-1}  2^{-m} k_{m}^{-1/p} e^{-\rho t}  \\
&\ge c_1  2^{-n}k_{n}^{-1/p} \left( \frac{ \log k_{n}}{k_{n}} \right)^{1/\alpha} - c_2 2^{-n} k_{n+1}^{-1/p} \left( \frac{ \log k_{n}}{k_{n}} \right)^{1/\alpha} -c_2 e^{-\rho t} \\
&\ge  c_1  2^{-(n+1)}k_{n}^{-1/p} \left( \frac{ \log k_{n}}{k_{n}} \right)^{1/\alpha} - c_2 e^{-\rho t}.
\end{align*}
Hence
\begin{align*}
\lefteqn{\hskip-20pt\int_{2k_{n}/3}^{k_{n}} \left( \|T(t)A^{-1}f\|_{X_{\alpha}} + c_2e^{-\rho t} \right)^p \Bigl(\frac{t}{\gamma(t)\log (t+2)}\Bigr)^{p/\alpha}
\,dt}  \\
&\ge \frac{k_{n}}{3} c_1^p 2^{-p(n+1)} k_{n}^{-1} \left( \frac{ \log k_{n}}{k_{n}} \right)^{p/\alpha} \left( \frac{2k_{n}/3}{\gamma(2k_n/3) \log(k_{n} + 2)} \right)^{p/\alpha}\\
&\to\infty.
\end{align*}
Since $\gamma(t) \ge t^{-1}$ for $t\ge1$, we have that $t \mapsto e^{-\rho t} \Bigl(\frac{t}{\gamma(t)\log (t+2)}\Bigr)^{p/\alpha}$ is in $L^p(\R_+)$, and (\ref{71c}) follows.
\end{proof}

Analogously, for logarithmic rates, using Proposition~\ref{propA} we obtain:

\begin{theorem}\label{xyzA}
Given $\alpha > 0$ and $p\ge 1$, there exist $\gamma>0$ and a Banach space $X_{\alpha}$, a bounded $C_0$-semigroup
$(T(t))_{t \ge 0}$ on $X_{\alpha}$ with
generator $A$, and a vector $f\in X_{\alpha}$, such that
\begin{enumerate} [\rm (a)]
\item  $\|R(is,A)\|_{\mathcal{L}(X_\alpha)}={\rm O}\big((\log |s|)^{\alpha}\big), \qquad |s| \to \infty$, \smallskip
\item $\displaystyle \int_0^\infty \|T(t)f\|^p_{X_{\alpha}}dt<\infty$, \smallskip
\item $\displaystyle \int_0^\infty\|T(t)A^{-1}f\|^p_{X_{\alpha}} e^{\gamma t^{1/(\alpha+1)}}
\,dt=\infty$.
\end{enumerate}
\end{theorem}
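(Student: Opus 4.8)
The plan is to repeat the proof of Theorem~\ref{xyz} with only the changes dictated by passing from polynomial to logarithmic rates. Set $\Omega = \{\lambda\in\C : \Re\lambda > -(\log(2+|\Im\lambda|))^{-\alpha}\}$ and $\Omega_0 = \Omega\cap\{|\Re\lambda|<1\}$, and let $X_\alpha$ be the space of $f\in{\rm BUC}(\R_+)$ whose Laplace transform extends analytically to $\Omega_0$ with $|\widehat f(\lambda)|(\log(2+|\Im\lambda|))^{-\alpha}\to0$ as $|\lambda|\to\infty$ in $\Omega_0$, normed by $\|f\|_{X_\alpha}=\|f\|_\infty+\sup_{\lambda\in\Omega_0}|\widehat f(\lambda)|(\log(2+|\Im\lambda|))^{-\alpha}$. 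As in Theorem~\ref{xyz}, $(T(t))_{t\ge0}$ will be the restriction to $X_\alpha$ of the left shift semigroup on ${\rm BUC}(\R_+)$, and $A$ its generator.

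First I would check that $(T(t))_{t\ge0}$ is a bounded $C_0$-semigroup on $X_\alpha$ satisfying $\|R(is,A)\|={\rm O}((\log|s|)^\alpha)$. Using $\widehat{T(t)f}(\lambda)=e^{\lambda t}\widehat f(\lambda)-e^{\lambda t}\int_0^t e^{-\lambda s}f(s)\,ds$ together with $|\widehat g(\lambda)\,\Re\lambda|\le\|g\|_\infty$ on $\C_+$ and the definition of $\|\cdot\|_{X_\alpha}$, one obtains, for $\lambda\in\Omega_0\setminus i\R$ and with a constant independent of $t$, the estimate $|\widehat{T(t)f}(\lambda)|\le C\|f\|_{X_\alpha}\big(|\Re\lambda|^{-1}+(\log(2+|\Im\lambda|))^\alpha\big)$. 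Applying the polynomial-growth version of Levinson's theorem \cite[Lemma 4.6.6]{ABHN01} in the squares $\{\lambda:|\Re\lambda|<(\log(s+2))^{-\alpha},\ |\Im\lambda-s|<(\log(s+2))^{-\alpha}\}$ --- the only structural change from the proof of Theorem~\ref{xyz} being the side length of these squares --- yields $\sup_{\lambda\in\Omega_0}|\widehat{T(t)f}(\lambda)|(\log(2+|\Im\lambda|))^{-\alpha}\le C\|f\|_{X_\alpha}$, uniformly in $t$; the $o(\cdot)$-decay condition is propagated and strong continuity at $0$ is verified exactly as there. Since $(R(\lambda,A)f)(t)=\widehat{T(t)f}(\lambda)$, the same estimate, now producing the factor $(\log(2+|\Im\lambda|))^\alpha$ on the vertical sides of the squares, gives $\|R(\lambda,A)f\|_{X_\alpha}\le C(\log(2+|\Im\lambda|))^\alpha\|f\|_{X_\alpha}$ for $0<\Re\lambda<1$, which proves (a) and also gives $A^{-1}f=-\int_0^\infty T(t)f\,dt$.

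For the construction I would take $f=\sum_{n\ge1}2^{-n}k_n^{-1/p}f_n$ with $f_n=\mathcal L\mu_n$, where $\{k_n\}$ and $\mu_n$ are supplied by Proposition~\ref{propA} and the $k_n$ are chosen to increase fast enough (at least geometrically, with ratio depending on the constants below). By parts (i)--(ii) of Proposition~\ref{propA} one has $f_n\in X_\alpha$ (since $\widehat{f_n}=\mathcal G\mu_n(0,\cdot)$ is bounded on $\Omega$), $\widehat{T(t)f_n}(\cdot)=\mathcal G\mu_n(t,\cdot)$, $(A^{-1}f_n)(\cdot)=\mathcal N\mu_n(\cdot)$, and $\|T(\cdot)f_n\|_{L^p(\R_+,X_\alpha)}^p\le Ck_n$; hence the series converges in $L^p(\R_+,X_\alpha)$ and (b) holds. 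For (c), parts (iii)--(iv) of Proposition~\ref{propA} give $\|T(t)A^{-1}f_n\|_\infty\ge|(A^{-1}f_n)(k_n)|=|\mathcal N\mu_n(k_n)|\ge c\,e^{-4k_n^{1/(\alpha+1)}}$ for $0\le t\le k_n$, while on $[2k_n/3,k_n]$ the cross terms $\|T(t)A^{-1}f_m\|_\infty$ are controlled by $e^{-\rho t}$ (for $m<n$, being far to the right of $k_m$) and by $e^{-c'k_m^{1/(\alpha+1)}}$ (for $m>n$, using the upper bound on $|\mathcal N\mu_m|$ near its peak that emerges from the proof of Proposition~\ref{propA}, analogous to \eqref{X6}); if the $k_n$ grow fast enough these are negligible against $e^{-4k_n^{1/(\alpha+1)}}$. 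Integrating $\|T(t)A^{-1}f\|_{X_\alpha}^p\,e^{\gamma t^{1/(\alpha+1)}}$ over $[2k_n/3,k_n]$ then produces, up to constants, a term of size $2^{-pn}\exp\!\big((\gamma(2/3)^{1/(\alpha+1)}-4p)\,k_n^{1/(\alpha+1)}\big)$, which tends to $\infty$ provided $\gamma$ is taken above a threshold of order $p$; the residual $e^{-\rho t}$ contributions are harmless since $t\mapsto e^{-\rho t}e^{\gamma t^{1/(\alpha+1)}}$ is integrable. This gives (c) and completes the proof.

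The parts I expect to demand the most care --- though neither is a genuine obstacle, since the essential work is packaged in Proposition~\ref{propA} and in the method of Theorem~\ref{xyz} --- are: first, the Levinson-type argument showing that $(T(t))_{t\ge0}$ really is a bounded $C_0$-semigroup on the logarithmically weighted space $X_\alpha$ with the $(\log|s|)^\alpha$ resolvent bound, this being the only step whose geometry genuinely changes (thin squares of side $(\log(s+2))^{-\alpha}$); and second, the bookkeeping in the choice of $\gamma$ and of the growth rate of $\{k_n\}$, needed to ensure simultaneously that the cross terms remain negligible and that $e^{-4k^{1/(\alpha+1)}}$ is beaten by the weight $e^{\gamma t^{1/(\alpha+1)}}$ on a window of length comparable to $k_n$.
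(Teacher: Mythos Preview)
Your proposal is correct and follows exactly the route the paper indicates: the paper's own proof of Theorem~\ref{xyzA} consists of the single sentence ``Analogously, for logarithmic rates, using Proposition~\ref{propA} we obtain'', so you are simply filling in the intended details by transporting the proof of Theorem~\ref{xyz} to the logarithmic setting with $\Omega$, $X_\alpha$, and the Levinson squares modified accordingly.

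One small remark on your handling of the cross terms for $m>n$: you correctly observe that you need an upper bound on $|\mathcal N\mu_m|$ near its peak (the analogue of \eqref{X6}), which is not among the four estimates listed in Proposition~\ref{propA} as stated. You are right that such a bound---of the form $|\mathcal N\mu_m(t)|\le Ce^{-c'k_m^{1/(\alpha+1)}}$ on $|t-k_m|\le k_m/2$---emerges from the proof of Proposition~\ref{propA} exactly as \eqref{X6} does from the proof of Proposition~\ref{prop}, since with $w=iH-1-2(\log H)^{-\alpha}$ and $H=\exp(k^{1/(\alpha+1)})$ the factor $1/|w|$ in \eqref{dop3} contributes $e^{-k^{1/(\alpha+1)}}$. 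This is a genuine (if minor) omission in the statement of Proposition~\ref{propA}, not in your argument.
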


Next, we show that Corollary \ref{energyindiv} and hence Theorem \ref{energydecayabs} are optimal even on Hilbert spaces.
Consider the left shift $C_0$-semigroup $(S(t))_{t \ge 0}$ on the Hilbert space $L^2(\mathbb R_+)$, with generator $A$. Given $\alpha>0$, we again let $\Omega=\{\lambda \in\mathbb C: \Re \lambda > -1/(1+|\Im\lambda|)^\alpha\}$.

\begin{theorem}\label{t7}
Given $\alpha > 0$, $p\ge1$, and a positive function $\gamma
\in C_0(\mathbb R_+)$, there exists $f\in X:=L^2(\mathbb R_+)$ such that the following hold:
\begin{enumerate}[\rm (a)]
 \item \label{73a} $\int_0^\infty\|S(t)f\|_X^p\,dt<\infty$,
\smallskip
\item \label{73b} $R(\cdot,A)f$ extends analytically to $\Omega$, and its extension $G$ satisfies
$$
\|G(\lambda)\|_X \le C (1+|\Im \lambda|)^{\alpha/2}, \qquad \lambda\in\Omega,
$$
\item \label{73c} $\displaystyle
 \int_0^\infty \|S(t)A^{-1}f\|_X^p \, \Bigl(\frac{t}{\gamma(t)\log (t+2)}\Bigr)^{p/\alpha}dt = \infty$.
\end{enumerate}
\end{theorem}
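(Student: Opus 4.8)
The plan is to follow the template of Theorems \ref{example} and \ref{xyz}: build $f$ as a series of Laplace transforms of the measures supplied by Proposition \ref{prop}, and exploit that for the left shift everything can be written down explicitly. Fix $\beta$ with $\alpha/2<\beta$, to be taken close to $\alpha/2$. For a rapidly increasing sequence $(k_n)$, still to be chosen, let $\mu_n=\mu(k_n)$, $w_n=w(k_n)$ be as in Proposition \ref{prop}, and set $f_n=\mathcal{L}\mu_n$; by \eqref{X3}, $f_n\in L^2(\R_+)$. Since $(S(t)g)(s)=g(s+t)$ and $\supp\mu_n\subset\C\setminus\Omega\subset\C_-$, for $\Re\lambda>0$ one computes $(R(\lambda,A)f_n)(s)=\int(\lambda-\zeta)^{-1}e^{s\zeta}\,d\mu_n(\zeta)=\mathcal{G}\mu_n(s,\lambda)$, which already displays the analytic continuation of $R(\cdot,A)f_n$ to $\C\setminus\supp\mu_n\supset\Omega$; and since $A$ is injective with $\mathcal{N}\mu_n\in D(A)$ and $(\mathcal{N}\mu_n)'=\mathcal{L}\mu_n$, we get $(A^{-1}f_n)(s)=\mathcal{N}\mu_n(s)$. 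Consequently
\[
\|S(t)f_n\|_X^2=\int_t^\infty|\mathcal{L}\mu_n|^2,\qquad \|R(\lambda,A)f_n\|_X^2=\int_0^\infty|\mathcal{G}\mu_n(s,\lambda)|^2\,ds,\qquad \|S(t)A^{-1}f_n\|_X^2=\int_t^\infty|\mathcal{N}\mu_n|^2,
\]
and these are estimated from \eqref{X3}, \eqref{XQ4}, \eqref{X5}, \eqref{X6}.

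Put $f=\sum_nc_nf_n$ with positive scalars $c_n$. By \eqref{X3}, $\mathcal{L}\mu_n$ is essentially a bump of height $O(1)$ and width $\asymp\sqrt{k_n}$ near $k_n$, so $\|f_n\|_X\le Ck_n^{1/4}$ and $\|S(t)f_n\|_X\le Ck_n^{1/4}$ for $t\le k_n$, exponentially small beyond; hence $\|S(\cdot)f_n\|_{L^p(\R_+,X)}\le Ck_n^{1/4+1/p}$ and the triangle inequality yields \ref{73a} once $\sum_nc_nk_n^{1/4+1/p}<\infty$. For \ref{73c}, \eqref{X5} shows $\mathcal{N}\mu_n$ has a bump of height $\asymp(\log k_n/k_n)^{1/\alpha}$ and width $\asymp\sqrt{k_n}$ at $k_n$, so $\|S(t)A^{-1}f_n\|_X\ge ck_n^{1/4}(\log k_n/k_n)^{1/\alpha}$ for $0\le t\le k_n/2$; \eqref{X6} bounds the cross terms $\|S(t)A^{-1}f_m\|_X$, $m\ne n$ (exponentially small in $k_n$ for $m<n$, and dominated by the $m=n$ term once $(k_n)$ grows fast enough, as in the proof of Theorem \ref{xyz}). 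Integrating this lower bound for $\|S(t)A^{-1}f\|_X$ against $(t/(\gamma(t)\log(t+2)))^{p/\alpha}$ over an interval of length $\asymp k_n$ near $t\asymp k_n$ — using the standard reductions that $\gamma$ is non-increasing with $\gamma(t)\ge t^{-1}$ — shows that the $n$-th block contributes at least $ck_n^{1+p/4}c_n^p\gamma(k_n)^{-p/\alpha}$, so \ref{73c} holds as soon as $k_n^{1+p/4}c_n^p\gamma(k_n)^{-p/\alpha}\to\infty$; since $\gamma\to0$ this is compatible with $\sum_nc_nk_n^{1/4+1/p}<\infty$ provided $(k_n)$ is chosen making $\gamma(k_n)\to0$ sufficiently fast.

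The main obstacle is \ref{73b}. One writes $\|G(\lambda)\|_X\le\sum_nc_n\|\mathcal{G}\mu_n(\cdot,\lambda)\|_X$ and estimates $\|\mathcal{G}\mu_n(\cdot,\lambda)\|_X^2=\int_0^\infty|\mathcal{G}\mu_n(s,\lambda)|^2\,ds$ from \eqref{XQ4}. For $\lambda\in\Omega$ at distance $\ge2$ from $w_n$ this gives $\|\mathcal{G}\mu_n(\cdot,\lambda)\|_X\le C\sqrt{k_n}$; moreover, for $\lambda$ in any fixed bounded set one has the sharper $\|\mathcal{G}\mu_n(\cdot,\lambda)\|_X=O(\|\mathcal{N}\mu_n\|_X)$, since $(\lambda-\zeta)^{-1}=-\zeta^{-1}(1+O(|\lambda|/|w_n|))$ uniformly for $\zeta\in\supp\mu_n$, so $\|G(\lambda)\|_X=O(1)$ there by \ref{73a}; and for large $|\Im\lambda|$ (away from all $w_n$) the sum is absorbed by $(1+|\Im\lambda|)^{\alpha/2}$, exactly as the ``$+1$'' in \eqref{XQ4} is dealt with for $\widehat f$ in Theorem \ref{example}. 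The delicate range is $\lambda$ within distance $2$ of $w_n$: there $1+|\Im\lambda|\asymp|w_n|\asymp(k_n/\log k_n)^{1/\alpha}$ and \eqref{XQ4} gives $\|\mathcal{G}\mu_n(\cdot,\lambda)\|_X\le C\sqrt{k_n}\,(1+|\Im\lambda|)^{\beta}$, so requiring $c_n$ times this to be $\le C(1+|\Im\lambda|)^{\alpha/2}$ forces $c_n\le Ck_n^{-\beta/\alpha}(\log k_n)^{\beta/\alpha-1/2}$.

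The crux of the whole argument is that this ``resonant'' upper bound for $c_n$ must be reconciled with the lower bound for $c_n$ demanded by \ref{73c}: taking $\beta$ close enough to $\alpha/2$ and letting $(k_n)$ grow rapidly makes the two conditions simultaneously satisfiable, and simultaneously secures $\sum_nc_nk_n^{1/4+1/p}<\infty$ (hence \ref{73a}) and the remaining cases of \ref{73b}. Finally, as in Theorem \ref{lpingham}, membership of $S(\cdot)A^{-1}f$ in $L^p(\R_+,X)$ together with uniform continuity gives $S(\cdot)A^{-1}f\in C_0(\R_+,X)$ if one wishes to record it, though only the three displayed properties are asserted. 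I expect the balancing of the $c_n$ in the last paragraph — in particular handling the whole range $p\ge1$ rather than small $p$ — to be the technically hardest point, and the one where the flexibility in the auxiliary parameters ($\beta$, the rate of growth of $(k_n)$, and the freedom to pass to a subsequence in Proposition \ref{prop}) is essential.
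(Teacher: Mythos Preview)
Your approach is essentially the paper's: build $f$ as $\sum c_nf_n$ with $f_n=\mathcal{L}\mu_n$ from Proposition~\ref{prop}, identify $R(\lambda,A)f_n=\mathcal{G}\mu_n(\cdot,\lambda)$ and $A^{-1}f_n=\mathcal{N}\mu_n$, and balance the estimates \eqref{X3}--\eqref{X6}. Two remarks sharpen your sketch. First, the side argument for bounded $\lambda$ (``$(\lambda-\zeta)^{-1}=-\zeta^{-1}(1+O(|\lambda|/|w_n|))$ gives $\|\mathcal{G}\mu_n(\cdot,\lambda)\|_X=O(\|\mathcal N\mu_n\|_X)$'') is both unnecessary and not justified as written: the $O$-term cannot be controlled pointwise in $t$ because $\|\mu_n\|$ is enormous and all the gain comes from cancellation. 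You don't need it --- the non-resonant bound $\|\mathcal{G}\mu_n(\cdot,\lambda)\|_X\le C\sqrt{k_n}$ already handles bounded $\lambda$ once $\sum c_n\sqrt{k_n}<\infty$. Second, ``$\beta$ close to $\alpha/2$'' is the right instinct but the paper pins it down as $\alpha/2<\beta<\alpha/2+\alpha/p$: this upper bound lets one absorb $|\Im\lambda|^{\beta-\alpha/2}\le Ck_n^{1/p}$ near $w_n$, turning the resonant estimate into a uniform bound $\|G_n(\lambda)\|_X\le Ck_n^{1/4+1/p}(1+|\Im\lambda|)^{\alpha/2}$, after which the single explicit choice $c_n=2^{-n}k_n^{-(1/4+1/p)}$ makes (a), (b), (c) fall out simultaneously without any further juggling of subsequences against the rate of $\gamma$.
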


\begin{proof} The proof is similar to those of Theorems \ref{example} and \ref{xyz}, and we omit some details.  We use Proposition~\ref{prop}, for $\alpha/2<\beta<\alpha/2+\alpha/p$, to take a strictly increasing sequence $\{k_n: n\ge1 \}\subset \mathbb N$, complex measures $\mu_n$ with compact support in $\C \setminus \Omega$, and  $\{w_n\}\subset  \mathbb C\setminus \Omega$, such that
\begin{equation} \label{wkn}
\lim_{n\to\infty} \frac{\log|w_n|}{\log k_n}=\frac 1\alpha, \qquad \supp\mu_n\subset \{z:|z-w_n|<1\},
\end{equation}
 and properties \eqref{X3}, \eqref{XQ4},  \eqref{X5} and \eqref{X6} hold for $\mu=\mu_n$ and $k=k_n$.  Let $f_n:=  \XL\mu_n$.  Using \eqref{X3} similarly to \eqref{T11}, we have that $f_n \in L^2(\mathbb R_+)$ and
\begin{equation} \label{T1}
 \|S(t)f_n\|_X=\|S(t)\mathcal L\mu_n\|_X\le c k_n^{1/4}\chi_{\{t:t\le 2k_n\}}+ C e^{-\rho t}, \qquad t\ge 0.
\end{equation}
We have $R(\lambda,A)f_n=\mathcal G\mu_n(\cdot,\lambda)$ for $\Re\lambda>0$. Putting $G_n(\lambda)(t) =  \mathcal G\mu_n(t,\lambda)$ for $\lambda \in \Omega$, we  obtain an analytic extension to a function $G_n : \Omega \to L^2(\R_+)$.   For $\lambda \in \Omega$, \eqref{XQ4}, \eqref{wkn}, and the fact that $\beta < \alpha/2+\alpha/p$, give
\begin{align} \label{T4}
\|G_n(\lambda)\|_X &\le Ck_n^{1/4}\big(|\Im \lambda|^{\beta}\chi_{\{\lambda:|\lambda-w_n|<2\}} +1 \big) + C \\
&\le C k_n^{1/4+1/p}(1+|\Im \lambda|)^{\alpha/2}. \nonumber
\end{align}
Also, $A^{-1}f_n=\XN\mu_n$, and
$$
\|S(t)A^{-1}f_n\|_X^2=\int_0^\infty |\XN\mu_n(s+t)|^2ds.
$$
By \eqref{X5},
\begin{equation}
\|S(t)A^{-1}f_n\|_X\ge ck_n^{1/4}\Bigl(\frac{\log k_n}{k_n}\Bigr)^{1/\alpha},\qquad t\le k_n,\label{T5}
\end{equation}
and by \eqref{X6},
\begin{equation}
\|S(t)A^{-1}f_n\|_X\le Ck_n^{1/4}\Bigl(\frac{\log k_n}{k_n}\Bigr)^{1/\alpha}\chi_{\{t:t\le 2k_n\}}+ Ce^{-\rho t},\quad t\ge 0.\label{T6}
\end{equation}

The remainder of the proof closely follows the corresponding part of the proof of Theorem \ref{xyz}.  After passing to a suitable subsequence and relabelling, we let
$$
f=\sum_{n\ge 0}2^{-n}k_{n}^{-(1/p+1/4)}f_{n}.
$$
Then (\ref{73a}) follows from \eqref{T1}, (\ref{73b}) follows from  \eqref{T4}, and, finally, (\ref{73c}) follows from \eqref{T5} and \eqref{T6} if the subsequence increases sufficiently fast.
\end{proof}

\section{Acknowledgements}

The authors would like to thank the referee for
useful comments.

\end{document}